\DeclareMathOperator{\supp}{supp}
\DeclareMathOperator{\diam}{diam}
\DeclareMathOperator{\vol}{Vol}
\DeclareMathOperator{\conv}{conv}
\def\N{\mathbb{N}}
\def\R{\mathbb{R}}
\def\definedas{:=}
\def\convinltwo{\stackrel{L_2}{ \to}}
\def\cC{\mathcal{C}}
\def\cI{\mathcal{I}}
\def\cL{\mathcal{L}}
\def\cM{\mathcal{M}}
\def\cN{\mathcal{N}}
\def\cP{\mathcal{P}}
\def\cB{\mathcal{B}}
\def\cS{\mathcal{S}}
\def\cX{\mathcal{X}}
\def\cY{\mathcal{Y}}
\def\oconv{\conv^{\circ}}
\newcommand{\E}{\mathbb{E}} 
\newcommand{\given}{\;|\;}
\newcommand{\mean}[1] {\E\left\{{#1}\right\}}
\newcommand{\cmean}[2] {\E\left\{#1\given #2\right\}}
\newcommand{\ind}{\boldsymbol{\mathbbm{1}}} 
\newcommand{\var}[1]{\mathrm{Var}\param{{#1}}}
\newcommand{\set}[1]{\left\{#1\right\}}
\newcommand{\norm}[1]{\left\|#1\right\|}
\newcommand{\param}[1]{\left(#1\right)}
\newcommand{\abs}[1] {\left| {#1}\right|}
\newcommand{\prob}[1]{\mathbb{P}\left(#1\right)}
\newcommand{\cprob}[2]{\mathbb{P}\left(#1\given #2\right)} 
\newcommand{\eps}{\epsilon}
\newcommand{\bz}{\mathbf{z}}
\newcommand{\by}{\mathbf{y}}
\newcommand{\bx}{\mathbf{x}}
\newcommand{\bi}{{\bf{{i}}}}
\newcommand{\bj}{\mathbf{j}}
\newtheorem{lem}{Lemma}[section]
\newtheorem{thm}[lem]{Theorem}
\newtheorem{prop}[lem]{Proposition}
\newtheorem{cor}[lem]{Corollary}
\theoremstyle{definition}
\newtheorem{defn}[lem]{Definition}
\newcommand{\cech}{\v{C}ech }
\newcommand{\Rd}{\R^d}
\newcommand{\iid}{\mathrm{i.i.d.}}
\def\Nk{N_{k,n}}
\def\Sk{S_{k,n}}
\def\Nkg{\Nk^{(g)}}
\def\Nkt{\widehat{N}_{k,n}}
\newcommand{\Nkts}[1]{{N}_{k,n}^{{#1}}}
\def\bk{\beta_{k,n}}
\def\bkm{\beta_{k-1,n}}
\newcommand{\rnd}{r_n^d}
\newcommand{\rndk}{r_n^{dk}}
\newcommand{\factor}{n^{k+1} \rndk}
\newcommand{\ninf}{n\to\infty}
\newcommand{\pois}[1]{\mathrm{Poisson}\param{{#1}}}
\newcommand{\grn}{g_{r_n}}
\newcommand{\hrn}{h_{r_n}}
\newcommand{\fmax}{f_{\max}}
\newcommand{\fmin}{f_{\min}}
\newcommand{\limninf}{\lim_{\ninf}}
\newcommand{\qin}{Q_{i,n}}
\newcommand{\dtv}[2]{d_{\mathrm{TV}}\param{{#1},{#2}}}
\newcommand{\bs}{\backslash}
\newcommand{\non}{\nonumber}
\def\CC{\check{C}}
\def\const {c^\star}
\numberwithin{equation}{section}
\def\:{:\,}
\newcommand{\bc}{\begin{center}}
\newcommand{\ec}{\end{center}}
\newcommand{\beq}{\begin{eqnarray}}
\newcommand{\eeq}{\end{eqnarray}}
\newcommand{\beqq}{\begin{eqnarray*}}
\newcommand{\eeqq}{\end{eqnarray*}}
\begin{document}

\title{Distance Functions, Critical Points, and  the Topology of Random \cech Complexes}

\shorttitle{Distance Functions}


\author{Omer Bobrowski}             
\email{omer@math.duke.edu}
\address{Department of Mathematics,
         Duke University,
         120 Science Dr.
         Durham, NC, 27708
         USA}
\thanks{O.B. was supported in part by the Adams Fellowship Program of the Israel Academy of Sciences and Humanities and FP7-ICT-318493-STREP, TOPOSYS.}

\author{Robert J. Adler}             
\email{robert@ee.technion.ac.il}
\address{Department of Electrical Engineering,
         Technion - Israel Institute of Technology,
         Haifa,32000
         Israel}

\thanks{R.A. was supported in part by AFOSR FA8655-11-1-3039 and ERC  2012 Advanced Grant 20120216, URSAT.}


\classification{60D05, 60F05, 60G55, 55U10, 58K05}

\keywords{Distance function, critical points, Morse index, \cech complex, Poisson process, central limit theorem, Betti numbers}

\begin{abstract}
For  a finite set of points $\cP$ in $\R^d$, the function
$d_{\cP}\:\R^d\to\R^+$ measures Euclidean distance to the set $\cP$. We study the number of critical points of $d_{\cP}$ when $\cP$ is
a Poisson process.
In particular, we study
the limit behavior of $N_k$ -- the number of critical points of $d_{\cP}$
with Morse index $k$ -- as the
density of points grows.
 We present explicit computations for the normalized, limiting,
expectations and variances of the $N_k$, as well as distributional limit theorems.  We link these results
to recent  results in \cite{kahle_random_2011, kahle_limit_2010} in which
 the Betti numbers of the random \cech complex based on $\cP$ were studied.
\end{abstract}

\maketitle


\section{Introduction}
For a finite set $\cP$   of points in $\R^d$, of size $|\cP |$, let $d_{\cP}\:\R^d\to\R^+$ be the distance function for $\cP$, so that
\beq
\label{dfunction:def}
d_{\cP}(x) \definedas \min_{p\in\cP}\|{x-p}\|_2, \quad x\in\R^d,
\eeq
where $\|\cdot\|_2$ denotes the Euclidean distance.

The main results of this paper provide considerable information about the
asymptotic (in $|\cP |$)
behavior of the critical points (defined below)  of $d_{\cP}$
when  $\cP$ is random. While the critical points
 are, by themselves,
 intrinsically
interesting, knowledge of their behavior also has immediate implications
(via Morse theory) to the study of the topology of \cech complexes built
over random point sets.

Throughout, we shall concentrate on the situation in which the points in $\cP$ are those of a non-homogeneous
Possion process with intensity $\lambda_n=nf$, where $f$ is a probability density on $\R^d$. The mean number of points is therefore $\mean{\abs{\cP}}=n$. Virtually identical results  hold when $\cP$ is made up of $n$ independent samples from $f$, and proofs in this situation can be found in the PhD thesis \cite{bobrowski2012thesis}.

Most of what we shall have to say will concentrate
on the distance function in neighborhoods of radius $r_n$ around $\cP$,
 when $n\to\infty$ and $r_n\to 0$. Our main results give expressions for the normalized, asymptotic, means and variances of $\Nk$ - the number of critical points with index $k$ appearing within distance $r_n$ from $\cP$,
along with various distributional limit results. The limit distributions are
of different kinds, and, depending on delicate relationships between
$d$, $k$, $r_n$ and $n$,  provide limits that may be Gaussian, Poisson, or
deterministic, while also exhibiting a range of critical phenomena.
Note that there are various notions of convergence used in probability theory, and so in Appendix \ref{sec:lim_dist} we provide  definitions of the notions that we need.
Our main results on critical points are
described in detail in Section  \ref{sec:results}. 
However, before stating the results, we first need to describe precisely how to define the critical points, along with their indices, for the distance function. The difficulty lies in the fact that the distance function is not everywhere
differentiable. We shall do this in the following section.

In Section \ref{sec:topology} we shall
discuss the relationship between $\Nk$ and the Betti numbers
of a special simplicial complex, the \cech complex, based on $\cP$.
The homology of the \cech complex is closely related to the neighborhood set,
or $r_n$-tube around $\cP$,
\beq
\label{ballunion}
\cB_n\ \definedas\ \bigcup_{p\in\cP}B_{r_n}(p),
\eeq
where $B_{r_n}(p)$ is the $d$-ball of radius $r_n$ around $p$.
What we shall see in Section   \ref{sec:topology} is that, if $r_n$ is
too small, then the individual balls in \eqref{ballunion} will generally fail to intersect, and the topology will be approximately that of a large number of disjoint points.
This is occasionally referred to as the ``dust" or ``sparse"  regime, although there does not yet seem to be a universally accepted term. If $r_n$ decays too slowly, then the balls will connect and the topology of
$\cB_n$ will be that of a single ball. At the (phase)  transition
$\cB_n$ will have a percolative-like structure, and so we call this
the {\it percolation phase}, which is also known as the ``thermodynamic limit". Each of these phases exhibits different limit
behavior, with even more subtle differences possible within phases
depending on interactions between parameters.

Translating our results about critical points into statements about the
(algebraic) topological structure of
$\cB_n$, as $n\to\infty$, will also allow us
 to compare them to other results currently in
the literature (primarily \cite{kahle_random_2011,kahle_limit_2010}). The one comment that we already make at this stage, however,
is that we can provide a much richer set of results for the asymptotic
behavior of numbers of critical points than is currently available for
the Betti numbers of these \cech complexes. Indeed, we can also provide some
topological results via critical points that are not yet available with a
direct topological approach. For example, we are able to compute properties of the Euler characteristic
$\chi_n$  of the complex, and can show (see Corollary \ref{cor:cech_ec} for details)
that there exist functions $\gamma_k$ such that
\beqq
\qquad \limninf n^{-1} \mean{\chi_n}=
\begin{cases}
1 & n\rnd \to 0,\\
1+\sum_{k=1}^d {(-1)^k \gamma_k(\lambda) }&    
n\rnd \to \lambda\in(0,\infty),\\
0 & n\rnd \to \infty.
 \end{cases}
\eeqq
Moreover, when $n\rnd\to \infty$ and $n\rnd \ge D^\star \log n$ for some  $D^\star$ (cf. Proposition \ref{prp:global_vs_local}), then $\mean{\chi_n}\to 1$.

The remainder of the paper contains the proofs of the results in
Sections  \ref{sec:results} and \ref{sec:topology}. These are
 organized in a number of sections and appendices
so as to make them as user friendly
as possible. Many of the proofs rely on techniques in the theory of random
geometric graphs as developed in \cite{penrose_random_2003}.

Finally, a few words on motivation. There is considerable current interest in
the study, from a topological, homological, point of view of random structures such as graphs and simplicial complexes. Some recent references are
\cite{aronshtam2010vanishing,babson2011fundamental,bobrowski2010euler,cohen2010homotopical,meshulam2009homological,pippenger2006topological} with two reviews,
from different aspects, in \cite{adler2010persistent} and \cite{ghrist_barcodes:_2008}. Many of these papers find their {\it raison d'\^etre} in essentially
statistical problems, in which data generates these structures. An important
example appears in  the papers
 \cite{niyogi_finding_2008, niyogi_topological_2010} which show that
the homology of an unknown manifold can be recovered, with high probability,
 by looking at the homology of the union of balls around  the points
of  random samples (or equivalently, at the homology of the \cech complex generated by the sampling
points on the manifold) with or without additional noise. The homological
theme of these papers, which considers manifolds as being `close' if their
homologies are the same,  seems
particularly promising for situations in which the manifold of interest is
embedded in a space of much higher dimension that itself; i.e.\ in
dimension reduction problems and in manifold learning.

The approach adopted in this paper shares the motivation of the others listed
above, but as  already noted,
by adopting a Morse theoretic point of view based on
 critical points of the distance function, obtains a more internally
complete theory.  Further, as mentioned above and shown later,  it often gives some information on 
 global topological invariants, such as Betti numbers. However, being based on critical points,  this approach is naturally limited
in its ability to reveal the full picture about the global topological invariants of random complexes.

\vskip0.1truein\noindent
{\it Acknowledgements.} We  thank Shmuel Weinberger for introducing us
to this problem, as well Matthew Strom Borman, Yuliy Baryshnikov, Matthew Kahle, and
Shmuel, for many useful discussions in the  earlier stages of our work.

\section{Critical Points of the Distance Function}
Critical points of smooth functions have been studied since the earliest days
of calculus, but took on significant additional importance following the
development of Morse theory (e.g.\ \cite{milnor1963morse,morse1969critical})
which tied them closely to the homologies of manifolds, a topic that
we shall discuss briefly in Section \ref{sec:topology}. At this point we
note that if  $\cM$ is a nice (closed, differentiable) $n$-dimensional manifold, and  $f\:\cM\to\R$  a nice (Morse) function, then a point $c$ is called a
 critical point if $\nabla f(c) = 0$. A non-degenerate critical point is
one for which the Hessian matrix $H_f(c)$ is non-singular. The Morse index $k\in\set{0,1,\ldots,n}$ of a non-degenerate critical point $c$ is then the number of negative eigenvalues of $H_f(c)$. These points, along with their indices, provide
one of the main links between differential and algebraic topology.

Classical Morse theory does not directly apply to the distance function
mainly because it is not everywhere differentiable. However, when the set $\cP$ is finite, one can still define a notion of non-degenerate critical points for the distance function $d_{\cP}$, as well as their Morse index. It turns out that, even in this case, knowledge of the critical points and their indices allows one to deduce
topological properties of the related \cech complexes. We shall see how to do this later in Section \ref{sec:topology}, but for
now we need some definitions.
Our arguments follow from
the results presented in \cite{gershkovich_morse_1997}. While the distance function served as the main motivation in \cite{gershkovich_morse_1997}, the results presented there are given in the more general context of `min-type' functions.
Here, we specialize those results to the case of the distance function.

Given a finite set  of points ${\cal P}\subset\R^d$, and defining the distance function
$d_{\cP}$  \eqref{dfunction:def},
we start with the local (and global) minima of $d_{\cP}$; viz.\ the points of
$\cP$ (where $d_{\cP} = 0$), and call these
critical points with index $0$. For higher indices,
 we have the following definition.

\begin{defn}\label{def:crit_pts}
A point $c\in\R^d$ is \emph{a critical point of  $d_{\cP}$ with index $1 \le k \le d$} if there exists a subset $\cY$ of $k+1$ points in $\cP$ such that:
\begin{enumerate}
\item $\forall y\in \cY\:  d_{\cP}(c) = \norm{c-y}_2 $, and, $\forall p\in \cP \backslash \cY$ we have $\norm{c-p}_2 > d_{\cP}(p)$.
\item The points in $\cY$ are in general position (i.e.\! the $k+1$ points
of $\cY$ do not lie in a $(k-1)$-dimensional affine space).
\item $c \in \oconv(\cY)$,
where $\oconv(\cY)$ is the interior of the convex hull of $\cY$ (an open $k$-simplex in this case).
\end{enumerate}
\end{defn}
The first condition implies that $d_{\cP} \equiv d_{\cY}$ in a small neighborhood of $c$.
The second condition implies that the points in $\cY$ lie on a unique $(k-1)$- dimensional sphere. We shall use the following notation:
\begin{align}
S(\cY) &= \textrm{The unique $(k-1)$-dimensional sphere containing $\cY$},\\
C(\cY) &= \textrm{The center of $S(\cY)$ in $\R^d$}, \label{eq:def_C}\\
R(\cY) &= \textrm{The radius of $S(\cY)$} , \label{eq:def_R}\\
B(\cY) &= \textrm{The open ball in $\R^d$ with radius $R(\cY)$ centered at $C(\cY)$},\label{eq:def_B}
\end{align}
Note that $S(\cY)$ is a $(k-1)$-dimensional sphere, whereas $B(\cY)$ is a $d$-dimensional ball. Obviously, $S(\cY) \subset B(\cY)$, but unless $k=d$, $S$ is
{\it not} the boundary of $B$.
Since the critical point $c$ in Definition \ref{def:crit_pts} is equidistant from all the points in $\cY$, we have that $c=C(\cY)$. Thus, we say that $c$ is the unique index $k$ critical point \textit{generated} by the $k+1$ points in $\cY$.
The last statement can be rephrased as follows:
\begin{lem} \label{lem:gen_crit_point}
A subset $\cY\subset \cP$ of $k+1$ points in general position generates an index $k$ critical point if, and only if, the following two conditions hold:
\begin{enumerate}[label=\bf{CP{\arabic*}}]
\item \label{cp1} $\quad C(\cY) \in \oconv(\cY)$,
\item \label{cp2} $\quad \cP \cap {B(\cY)}= \emptyset$.
\end{enumerate}
Furthermore, the critical point is $C(\cY)$ and the critical value is $R(\cY)$.
\end{lem}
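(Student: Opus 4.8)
The plan is to prove the lemma as a direct unpacking of Definition \ref{def:crit_pts}: I will establish the two implications by translating conditions (1)--(3) of that definition into the pair \textbf{CP1}--\textbf{CP2}, together with the identifications ``critical point $=C(\cY)$'' and ``critical value $=R(\cY)$''. Throughout I use the facts noted just before the lemma, namely that $k+1$ points $\cY$ in general position lie on a unique $(k-1)$-sphere, so $C(\cY)$ and $R(\cY)$ are well defined and $\norm{C(\cY)-y}=R(\cY)$ for every $y\in\cY$.

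For the forward implication, suppose $\cY$ generates an index $k$ critical point $c$. First I would observe that condition (3) puts $c$ in $\oconv(\cY)$, hence in the affine hull of $\cY$ (a $k$-dimensional flat), while condition (1) makes $c$ equidistant from all points of $\cY$, hence places $c$ on the intersection of the $k$ perpendicular-bisector hyperplanes of pairs in $\cY$. By general position these hyperplanes have linearly independent normals, so that intersection is a $(d-k)$-flat whose direction is the orthogonal complement of the direction of the affine hull of $\cY$; the two therefore meet in exactly one point, which is by definition $C(\cY)$. Hence $c=C(\cY)$, and the critical value is $d_{\cP}(c)=\norm{c-y}=R(\cY)$. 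Then \textbf{CP1} is just condition (3), and \textbf{CP2} follows because $d_{\cP}(c)=R(\cY)$ forces $\norm{C(\cY)-p}\ge R(\cY)$ for all $p\in\cP$, i.e.\ $\cP\cap B(\cY)=\emptyset$.

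For the converse, assuming $\cY$ is in general position and that \textbf{CP1}--\textbf{CP2} hold, I would set $c:=C(\cY)$ and check the three conditions of Definition \ref{def:crit_pts}: condition (2) is the general-position hypothesis, condition (3) is exactly \textbf{CP1}, and for condition (1) we have $\norm{c-y}=R(\cY)$ for all $y\in\cY$ by the definition of $C(\cY)$ and $R(\cY)$, whereas \textbf{CP2} gives $\norm{c-p}\ge R(\cY)$ for all $p\in\cP$, so that $d_{\cP}(c)=R(\cY)=\norm{c-y}$.

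The main (and essentially the only) obstacle is a matter of strict versus non-strict inequality: Definition \ref{def:crit_pts}(1) also demands $\norm{c-p}>d_{\cP}(c)$ for $p\in\cP\setminus\cY$, i.e.\ that $\cY$ be \emph{exactly} the nearest-neighbour set of $c$, while \textbf{CP2} only says that the \emph{open} ball $B(\cY)$ misses $\cP$. These agree unless some point of $\cP\setminus\cY$ lies on the critical sphere at distance precisely $R(\cY)$ from $C(\cY)$; in the random models of this paper that is a null event (a finite union of measure-zero coincidences among the sampled points), so it can be disregarded, or one could simply phrase condition (1) with ``$\ge$''. The only other point requiring care is the elementary linear-algebra claim invoked above --- that in general position the locus of points equidistant from $\cY$ is a $(d-k)$-flat meeting the affine hull of $\cY$ precisely at $C(\cY)$ --- which I would spell out in a sentence or two.
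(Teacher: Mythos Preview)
Your approach is correct and matches the paper's treatment: the paper does not give a separate proof at all, but presents the lemma as a direct rephrasing of Definition~\ref{def:crit_pts}, the only justification being the sentence immediately preceding the lemma (``Since the critical point $c$ \ldots\ is equidistant from all the points in $\cY$, we have that $c=C(\cY)$''). Your write-up simply makes explicit the linear-algebra step behind that sentence.

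You also flag a genuine subtlety that the paper passes over in silence: Definition~\ref{def:crit_pts}(1) requires the strict inequality $\norm{c-p}>d_{\cP}(c)$ for $p\in\cP\setminus\cY$, whereas \textbf{CP2} (open ball $B(\cY)$ disjoint from $\cP$) only forces $\norm{c-p}\ge R(\cY)$. Your resolution --- that equality is a measure-zero coincidence under the random models used later --- is exactly the right one for this paper, and is worth stating since the paper never does.
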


Figure \ref{fig:crit_pts} depicts the generation of an index $2$ critical point in $\R^2$ by  subsets of $3$ points.
We shall also be interested in critical points $c$ that are within distance $\eps$ from $\cP$, i.e. $d_{\cP}(c)\le \eps$. This adds a third condition,
\begin{enumerate}[label=\bf{CP\arabic*}]
\setcounter{enumi}{2}
\item \label{cp3} $\quad R(\cY) \le \eps$.
\end{enumerate}
\begin{figure}[h]
\centering
 \includegraphics[scale=0.3]{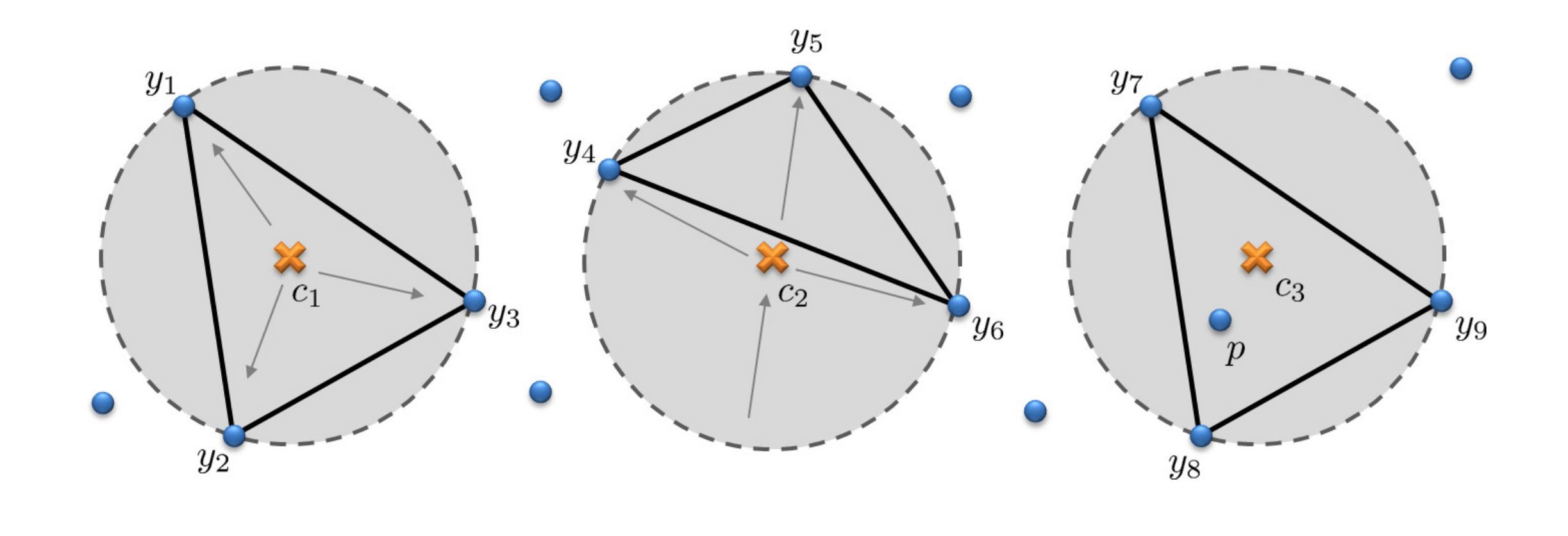}
\caption{\label{fig:crit_pts} Generating a critical point of index $2$ in $\R^2$ (i.e.\! a maximum point).
The small blue disks are the points of  $\cP$. We examine three subsets of $\cP$: $\cY_1 = \set{y_1,y_2,y_3}$, $\cY_2 = \set{y_4,y_5,y_6}$, and $\cY_3 = \set{y_7,y_8,y_9}$. $S(\cY_i)$ are the dashed circles, whose centers are $C(\cY_i) = c_i$. The shaded balls are $B(\cY_i)$, and the interior of the triangles are $\oconv(\cY_i)$.
(1) We see that both  $C(\cY_1) \in \oconv(\cY_1)$ \eqref{cp1} and $\cP\cap B(\cY_1) = \emptyset$ \eqref{cp2}. Hence $c_1$ is a critical point of index $2$.
(2) $C(\cY_2) \not\in \oconv(\cY_2)$, which means that \eqref{cp1} does not hold, and therefore $c_2$ is not a critical point (as can be observed from the flow arrows).
(3) $C(\cY_3) \in \oconv(\cY_3)$, so \eqref{cp1} holds. However, we have $\cP\cap B(\cY_3) = \set{p}$, so \eqref{cp2} does not hold, and therefore $c_3$ is also not a critical point. Note that in a small neighborhood of $c_3$ we have $d_{\cP} \equiv d_{\set{p}}$, completely ignoring the existence of $\cY_3$.}
\end{figure}

The following indicator functions, related to CP1--CP3, will appear often.
\begin{defn}\label{def:ind_fns}
Using the notation above,
\begin{align}
h(\cY) &\definedas \ind\set {C(\cY) \in \oconv(\cY)}\qquad\qquad\quad\ \ (\ref{cp1}) \label{eq:def_h} \\
h_\eps(\cY) &\definedas h(\cY) \ind_{[0,\eps]} (R(\cY)) \qquad\qquad \qquad\qquad (\ref{cp1}+\ref{cp3})\label{eq:def_h_eps}\\
g_\eps(\cY,\cP) &\definedas h_\eps(\cY) \ind\set{ \cP \cap {B(\cY)} = \emptyset} \ \qquad (\ref{cp1}+\ref{cp2}+\ref{cp3}) \label{eq:def_g_eps}
\end{align}
\end{defn}

\section{Main Results}
\label{sec:results}

Let $f$ be a bounded probability density function on $\R^d$  , which we assume to be bounded. This  assumption  will remain in force throughout the paper, without further comment.
Let $\cP_n$ be a spatial Poisson process on $\Rd$ with intensity function
$\lambda_n = n f$.
Denote by $\cC(n,k)$ the sets of critical points with index $k$ of $d_{\cP_n}$.
Let $\{r_n\}_{n=1}^{\infty}$ be a sequence of positive numbers with $\lim_{n\to\infty} r_n = 0$, and define
\beqq
N_{k,n} \definedas \#\{c\in \cC(n,k) \: d_{\cP_n}(c) \le r_n\}.
\eeqq

Our main goal  is to study the limits of $\Nk$ as $n\to\infty$. Since
 $ \E\{{N}_{0,n}\} \equiv n$ (the minima are the points of $\cP_n$)  we shall  only be interested in $1 \le k \le d$.
The results split into three main regimes, depending on the rate of convergence of $r_n$ to zero,  specifically, on the limit of the term $n\rnd$.

{A word on notation:} In the formulae presented below, for $g:(\R^d)^{k+1}\to\R$ and $\by= (y_1,\ldots,y_k)\in(\R^d)^k$ we write $g(0,\by)$  for $g(0,y_1,\ldots,y_k)$.
\subsection{The Subcritical Range ($n\rnd\to 0$)}\label{sec:results_subcrit}
This range is also known as the `dust phase', for reasons that will become clearer later, when we discuss  \cech complexes. We start with the limiting mean.
\begin{thm}[Limit mean]\label{thm:mean_subcrit}
If $n\rnd\to 0$, then for $1 \le k \le d$,
\beq
\label{limitmeanrates:eq}
\lim_{\ninf} (\factor)^{-1} \mean{\Nk}= \mu_k,
\eeq
where
\beq
\label{mu_k}
\mu_k = \frac{1}{(k+1)!}\int_{\Rd}f^{k+1}(x)dx \int_{(\Rd)^k} h_1(0,\by)d\by,
\eeq
and $h_1$ is defined in \eqref{eq:def_h_eps}.
\end{thm}
In general, as is common for results of this nature,
it is difficult to express this integral in a more transparent form.
However,  when $k=1$, $\by$ contains only a single point, and so
 $h\equiv 1$ and $R(0,\by) = \|\by\|/2$. Therefore,
$h_1(0,\by) = \ind\set{\norm{\by} \le 2}$, yielding $\mu_1 = 2^{d-1} \omega_d \int_{\Rd}f^2(x)\,dx$, where $\omega_d$ is the volume of the unit ball in $\R^d$.
Some numerics for other cases are given below.

The observation that, for a specific choice of $r_n$,
there is at most one $\alpha \in [1,d]$ such that $\limninf n^{\alpha+1} r_n^{d\alpha} \in (0,\infty)$ leads to the important fact that there is a
`critical' index, $ k_c \definedas \lfloor \alpha \rfloor $, such that
\beq
\qquad k < k_c \Rightarrow \limninf \mean{\Nk}  = \infty,  \label{kc:def}
\qquad k > k_c \Rightarrow\limninf \mean{\Nk}  = 0,  
\eeq
with any value in $(0,\infty]$ possible at $k=k_c$.
That is, there is phase transition occurring within the subcritical regime
itself. Similar regimes, with identical limits, appear
 for asymptotic variances.
\begin{thm}[Limit variance]\label{thm:var_subcrit}
If $n\rnd\to 0$, then for $1 \le k \le d$,
\[
\lim_{\ninf} (\factor)^{-1} \var{\Nk} = \mu_k.
\]
\end{thm}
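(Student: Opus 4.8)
The plan is to compute $\var{\Nk}$ directly via the decomposition $\var{\Nk} = \E\{\Nk(\Nk-1)\} + \E\{\Nk\} - (\E\{\Nk\})^2$, and to show that in the subcritical regime the factorial-moment term $\E\{\Nk(\Nk-1)\}$ is asymptotically $(\E\{\Nk\})^2$ up to lower-order corrections, so that the whole variance is driven by the linear term $\E\{\Nk\}$, whose normalized limit is $\mu_k$ by Theorem \ref{thm:mean_subcrit}. First I would write $\Nk$ as a sum over $(k+1)$-subsets $\cY\subset\cX_n$ of the indicator $g_{r_n}(\cY,\cX_n)$ from \eqref{eq:def_g_eps}; the first moment is then handled by Theorem \ref{thm:mean_subcrit}, so the work is entirely in $\E\{\Nk(\Nk-1)\}$, which by the multivariate Mecke/Palm formula (or its binomial analogue for $\cX_n$) becomes a sum of two types of terms indexed by pairs of $(k+1)$-subsets $(\cY_1,\cY_2)$: disjoint pairs, and pairs sharing $j$ points for $1\le j\le k$.

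The second step is to estimate each type. For the disjoint pairs, after the change of variables that localizes each cluster (translate so one point of each $\cY_i$ sits at the origin, then rescale the remaining coordinates by $r_n$), one gets a leading contribution of order $(\factor)^2$ whose coefficient is $\mu_k^2$ (this is where the factor $\frac{1}{(k+1)!}$ and the product structure of $\mu_k$ reappear), plus an error coming from the requirement that the two emptiness balls $B(\cY_1), B(\cY_2)$ both avoid $\cX_n$ — the joint-emptiness probability differs from the product of the individual emptiness probabilities only by a term controlled by the volume of $B(\cY_1)\cup B(\cY_2)$, which is $O(\rnd)$ and hence $o(1)$ after normalization. Thus the disjoint pairs contribute $\mu_k^2 (\factor)^2 (1+o(1))$, which exactly cancels $(\E\{\Nk\})^2$ to leading order, leaving an $o((\factor)^2)$ remainder; since $\factor\to\infty$ only when $k<k_c$ and otherwise $\factor$ stays bounded, one checks that in all cases this cancellation error is $o(\factor)$.

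The third step treats the shared-point pairs. A pair of $(k+1)$-subsets sharing $j$ points involves $2(k+1)-j$ distinct points of $\cX_n$, so by the Mecke formula such a term carries a prefactor $n^{2(k+1)-j}$ together with an integral over $2k+1-j$ rescaled difference variables contributing $r_n^{d(2k+1-j)}$ — that is, order $n^{2(k+1)-j} r_n^{d(2k+1-j)} = (\factor) \cdot (n^{k+1-j} r_n^{d(k+1-j)})$. The "dust" hypothesis $n\rnd\to 0$ forces, for each $j\le k$, the factor $n^{k+1-j} r_n^{d(k+1-j)} = (n r_n^d)^{k+1-j} \to 0$ (since $k+1-j\ge 1$), so every shared-point term is $o(\factor)$. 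One must also verify the finiteness of the geometric integrals $\int h_1$ and the analogous double integrals, which follows from the compact support forced by the radius constraint $R(\cY)\le 1$ in $h_1$ together with the general-position conditions. Assembling the pieces, $\var{\Nk} = \E\{\Nk\}(1+o(1)) + o(\factor) = \mu_k(\factor)(1+o(1))$.

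The main obstacle I expect is the bookkeeping in Step 2: making precise that the joint-emptiness probability for two overlapping balls factorizes up to a genuinely negligible error, uniformly over the configurations appearing after rescaling, and handling the switch between the binomial model $\cX_n$ and the Poisson model $\cP_n$ (where Mecke is clean but one must still control the coupling). The Poisson case is notationally cleaner and I would do it first; for $\cX_n$ one replaces the Palm calculation by the analogous identity for $U$-statistics and absorbs the discrepancy between $n^{m}$ and $n(n-1)\cdots(n-m+1)$ into the $o(\factor)$ error. All other steps are routine changes of variables and dominated-convergence arguments of the same flavor as in the proof of Theorem \ref{thm:mean_subcrit}.
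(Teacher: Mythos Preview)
Your decomposition by $|\cY_1\cap\cY_2|=j$ is exactly the paper's approach, and your treatment of the shared-point terms $1\le j\le k$ (each contributing $(\factor)(n\rnd)^{k+1-j}=o(\factor)$) matches it. The gap is in Step~2, the disjoint-pair case $j=0$, when $\factor\to\infty$. You write that the disjoint pairs give $\mu_k^2(\factor)^2(1+o(1))$, leaving an $o((\factor)^2)$ remainder after cancelling $(\E\{\Nk\})^2$, and then argue that ``since $\factor\to\infty$ only when $k<k_c$ and otherwise $\factor$ stays bounded, one checks that in all cases this cancellation error is $o(\factor)$.'' This reasoning is wrong: the problematic regime is precisely $\factor\to\infty$, where $o((\factor)^2)$ says nothing about $o(\factor)$, and no case distinction on $k_c$ rescues it.

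You actually have the ingredients for the fix in hand but discard them. You note that the relative error between the joint-emptiness probability and the product is $O(\rnd)$; if you keep that quantitative estimate rather than weakening it to $o(1)$, the $j=0$ remainder becomes $O\bigl((\factor)^2(\rnd+1/n)\bigr)$, and after dividing by $\factor$ this is $O((n\rnd)^{k+1}+(n\rnd)^k)\to 0$. The paper handles this case differently: it splits according to whether $B(\cY_1)\cap B(\cY_2)=\emptyset$. In the Poisson model the non-overlapping contribution vanishes \emph{exactly} by spatial independence, while the overlapping contribution forces all $2k+2$ points into a ball of radius $2r_n$, giving $\le\const\, n^{2k+2}r_n^{d(2k+1)}=(\factor)(n\rnd)^{k+1}=o(\factor)$; the binomial case is then recovered by De-Poissonization (Theorem~\ref{thm:depois}), not by the $U$-statistic bookkeeping you sketch. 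Either route closes the gap, but the argument as you have written it does not.
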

Not surprisingly, the three regimes also yield different limit distributions.
\begin{thm}[Limit distribution]\label{thm:dist_subcrit}
Let $n\rnd\to 0$, and  $1 \le k \le d$,
\begin{enumerate}
\item  If $\lim_{n\to\infty} n^{k+1}r_n^{dk} = 0$, then
\[
\Nk \xrightarrow{L^2} 0.
\]
\item If $\lim_{n\to\infty} n^{k+1}r_n^{dk} = \alpha \in (0,\infty)$, then
\[
\Nk \xrightarrow{\cL} \pois{{\alpha \mu_k}}.
\]
\item If $\lim_{n\to\infty} n^{k+1}r_n^{dk} = \infty$, then
\[
\frac{\Nk - \mean{\Nk}}{({\factor})^{1/2}} \xrightarrow{\cL} \cN(0,\mu_k).
\]
\end{enumerate}
\end{thm}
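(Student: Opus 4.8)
The plan is to express the counts as sums over $(k+1)$-subsets and then invoke, in each of the three regimes, a limit theorem tailored to sums of strongly localized functionals of a point process. By Lemma~\ref{lem:gen_crit_point} and Definition~\ref{def:ind_fns},
\[
\Nk = \sum_{\substack{\cY\subseteq\cX_n\\ \abs{\cY}=k+1}} \grn(\cY,\cX_n),
\qquad
\Nkt = \sum_{\substack{\cY\subseteq\cP_n\\ \abs{\cY}=k+1}} \grn(\cY,\cP_n),
\]
with $\grn(\cY,\cP)=h_{r_n}(\cY)\,\ind\{\cP\cap B(\cY)=\emptyset\}$; since $f$ has a density, a.s.\ distinct generating subsets produce distinct critical points, so these sums genuinely count $\Nk$ and $\Nkt$. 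The structural fact driving everything is that $\grn(\cY,\cdot)$ depends on the point configuration inside $B(\cY)$ only -- a ball of radius $R(\cY)\le r_n$ -- so throughout the subcritical regime ($r_n\to0$, $nr_n^d\to0$) the summands are both spatially local and, on average, rare. Part~(1) is then immediate from moment asymptotics already in hand: by Theorems~\ref{thm:mean_subcrit} and~\ref{thm:var_subcrit} one has $\E\{\Nk\}\sim\mu_k\factor$ and $\var{\Nk}\sim\mu_k\factor$, so when $n^{k+1}r_n^{dk}\to0$ we get $\E\{\Nk^2\}=\var{\Nk}+(\E\{\Nk\})^2\to0$, i.e.\ $\Nk\xrightarrow{L^2}0$. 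For the remaining parts I would work first with the Poisson version $\Nkt$, where Palm/Mecke calculus is clean, and transfer to $\Nk$ at the end by de-Poissonization.

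For Part~(2) I would use the method of factorial moments (the Chen--Stein method is an alternative that also delivers a rate). Put $\lambda_n\definedas\E\{\Nkt\}$, so $\lambda_n\to\alpha\mu_k$ by Theorem~\ref{thm:mean_subcrit}; since the Poisson law is determined by its moments, it suffices to show that the descending factorial moment $\E\{\Nkt(\Nkt-1)\cdots(\Nkt-j+1)\}$ converges to $(\alpha\mu_k)^j$ for each fixed $j\ge1$. Expanding this moment as a sum over $j$-tuples of distinct generating subsets $(\cY_1,\dots,\cY_j)$ and applying the multivariate Mecke formula yields integrals over $\bigcup_i\cY_i$. Split them into (a) the part in which the balls $B(\cY_1),\dots,B(\cY_j)$ are pairwise disjoint and (b) the rest. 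On (a) the void events $\{\cP_n\cap B(\cY_i)=\emptyset\}$ concern disjoint regions, hence are independent, and the integral factorizes into $\lambda_n^{\,j}(1+o(1))$. On (b) at least two of the subsets share a point or lie within distance $O(r_n)$ of one another, and every such coincidence eliminates a free spatial integration -- costing an $O(n)$ factor when a point is shared and an $O(nr_n^d)$ factor when the subsets are merely close -- so, as $nr_n^d\to0$, part (b) is $o(\lambda_n^{\,j})$. Thus $\Nkt\xrightarrow{\cL}\pois{\alpha\mu_k}$, and de-Poissonization gives the same for $\Nk$.

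For Part~(3), Theorem~\ref{thm:var_subcrit} already identifies $(\factor)^{1/2}$ as the right order of $(\var{\Nk})^{1/2}$ (up to the constant $\mu_k^{1/2}$), so only asymptotic normality of the centered sum remains. I would invoke a central limit theorem for sums indexed by a sparse dependency graph, in the Stein-method form used in the theory of random geometric graphs~\cite{penrose_random_2003}: set $\xi_\cY=\grn(\cY,\cP_n)$ and call $\cY$ and $\cY'$ adjacent when $\mathrm{dist}(\conv(\cY),\conv(\cY'))\lesssim r_n$, which -- by the localization above together with independence of void events over disjoint regions -- is exactly the situation in which $\xi_\cY$ and $\xi_{\cY'}$ may fail to be independent. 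One then checks the hypotheses: the summands are $\{0,1\}$-valued, the maximal degree of the dependency graph is under control, and $\var{\Nkt}\to\infty$ (valid here since $\var{\Nkt}\sim\mu_k n^{k+1}r_n^{dk}\to\infty$); the covariance estimates needed are the ones already underlying Theorem~\ref{thm:var_subcrit}. Finally transfer to $\Nk$ by de-Poissonization, checking that the correction it introduces is of order smaller than $(\factor)^{1/2}$ -- which again follows from the locality of the functional.

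The genuine obstacle is Part~(3), for two reasons. First, $\Nkt$ is a sum over \emph{subsets} of a random point set rather than over its points, and $\grn(\cY,\cP_n)$ ostensibly depends on the entire configuration through the empty-ball condition~\ref{cp2}; one must argue carefully that this dependence is confined to the ball $B(\cY)$ of radius $\le r_n$, so that the dependency-graph machinery truly applies. Second, one must bound uniformly the contribution to the third mixed moments (and, for a quantitative rate, higher ones) of $j$-tuples of subsets with various amounts of overlap, showing that they are dominated by the fully separated tuples; this is precisely where the hypothesis $nr_n^d\to0$ is indispensable. The de-Poissonization step, although standard, likewise relies on these locality estimates to ensure its fluctuations are $o((\factor)^{1/2})$.
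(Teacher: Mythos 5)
Parts (1) and (2) are essentially sound. Part (1) is exactly the paper's argument: $\E\{\Nk^2\}\sim \mu_k\factor\to 0$. For Part (2) you take a genuinely different route: the paper does not use factorial moments, but applies the Chen--Stein bound (Theorem \ref{thm:prelim:stein_bern}) to the simplified count $\Sk=\sum_{\cY}\hrn(\cY)$ -- whose summands have an honest deterministic dependency graph indexed by $(k+1)$-subsets of $\{1,\dots,n\}$ -- and then transfers to $\Nk$ via $\dtv{\Nk}{\Sk}\le\mean{\Sk-\Nk}\le\const\, n\rnd$ together with a Poisson--Poisson total-variation bound. Your factorial-moment/Mecke computation would also work and the disjoint-versus-overlapping split you describe is the right estimate; the one misstep is invoking ``de-Poissonization'' to pass from $\Nkt$ to $\Nk$: Theorem \ref{thm:depois} is a variance/CLT transfer device and says nothing about Poisson limits. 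You would instead either redo the factorial-moment computation directly for the binomial process (straightforward) or, as the paper does, run the Chen--Stein argument for $\cX_n$ directly.

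Part (3) contains a genuine gap. The dependency-graph CLT you want to invoke (Theorem \ref{thm:clt_stein}) requires a \emph{deterministic} finite index set, a \emph{deterministic} adjacency relation under which non-adjacent families are independent, and a bound on the maximal degree. Your proposed graph is indexed by the random $(k+1)$-subsets of $\cP_n$, its adjacency relation $\mathrm{dist}(\conv(\cY),\conv(\cY'))\lesssim r_n$ depends on the random point locations, and its degree is unbounded (the number of nearby subsets is itself a random variable with heavy combinatorial growth). As stated, the hypothesis ``the maximal degree of the dependency graph is under control'' cannot be checked, and the theorem does not apply. The paper's resolution is a spatial discretization: partition a compact set $A$ into cubes $\qin$ of side $r_n$, let the summands be the (centered) numbers of critical points falling in $A\cap\qin$, and declare $i\sim j$ when the cubes are within distance $2r_n$. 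This index set is deterministic, the degree is at most $5^d$, and locality of $\grn$ (each critical point in $\qin$ is generated by points within distance $r_n$ of $\qin$) plus spatial independence of $\cP_n$ makes it a genuine dependency graph. One then bounds $\mean{\abs{\xi_i}^p}$ for $p=3,4$ by stochastically dominating the local point count by a Poisson variable of mean $\const\, n\rnd$, and finally extends from $A$ to $\R^d$ by a truncation argument. Without this (or an appeal to a stabilization-type CLT, which you do not set up), your Part (3) does not go through as written; the de-Poissonization step at the end is, however, correctly identified and is indeed how the paper handles $\Nk$.
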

As above, for a specific choice of $r_n$, there is going to be at most a single $k_c$ for which the Poisson limit applies.
Otherwise $\Nk$ converges either to zero or infinity. Thus,
in the subcritical regime, the picture is that
$n = N_{0,n} \gg N_{1,n} \gg \dots \gg N_{k_c,n}$, 
while, for $k>k_c$ the value of $\Nk$ will be zero, with high probability, which increases with $k$.

\subsection{The Critical and Supercritical Ranges ($n\rnd\to \lambda \in (0,\infty]$)}\label{sec:results_crit}

We now look at the critical ($n\rnd\to \lambda \in (0,\infty)$) and supercritical ($n\rnd\to \infty$) regimes. While there are differences between the two
 regimes, the general outline of the results is the same.
In both, the correct scaling for $\Nk$ is $n$ (as opposed to $\factor$ in the subcritical range). Consequently, the limit results are similar for all the indices.

The supercritical regime is significantly more difficult to analyze than
either the critical or subcritical, and we shall require an additional
assumption for this case, which necessitates a definition.
\begin{defn}\label{def:lower_bdd_dens}
Let $f:\R^d\to\R$ be a probability density function. We say that $f$ is \textit{lower bounded} if it has compact support and $\fmin \definedas\inf\set{f(x) \:x\in \supp(f)} > 0$.
\end{defn}
Henceforth, when dealing with the supercritical phase, we always assume that  $f$ is a lower bounded probability density, and that  $\supp(f)$ is  convex.
It is not clear at this point
 if these are necessary conditions, or a
consequence of our proofs.

\begin{thm}[Limit mean]\label{thm:mean_crit}
If $n\rnd\to \lambda\in(0,\infty]$, then, for $1 \le k \le d$,
\[
\lim_{\ninf} n^{-1} \mean{\Nk}= {\gamma_{k}(\lambda)},
\]
where
\begin{align*}
\gamma_k(\lambda) &=\frac{\lambda^k}{(k+1)!}  \int_{(\Rd)^{k+1}}f^{k+1}(x) h_1(0,\by) e^{-\lambda \omega_d R^d(0,\by)f(x)}\,  d\by dx ,\\
\gamma_k(\infty) &= \lim_{\lambda\to\infty} \gamma_k(\lambda) = \frac{1}{(k+1)!}\int_{(\R^d)^k} {h(0,\by) e^{-\omega_d R^d(0,\by)} \,d\by},
\end{align*}
$\omega_d$ is the volume of the unit ball in $\R^d$, and $R,\ h$, and $h_1$ are defined in \eqref{eq:def_R}, \eqref{eq:def_h},
and \eqref{eq:def_h_eps}, respectively.
\end{thm}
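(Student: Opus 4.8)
The proof would run along the same lines as that of Theorem~\ref{thm:mean_subcrit}, the only genuinely new ingredient being that the void probability now contributes a non-trivial exponential factor in the limit. I would begin from the exact sum representation $\Nkt=\sum g_{r_n}(\cY,\cP_n)$, the sum ranging over all $(k+1)$-point subsets $\cY\subset\cP_n$, where $g_{r_n}(\cY,\cdot)$ is the indicator of \eqref{eq:def_g_eps}, so that by Lemma~\ref{lem:gen_crit_point} each such $\cY$ contributes exactly one index-$k$ critical point with critical value at most $r_n$. Applying the multivariate Mecke (Palm) formula for the Poisson process $\cP_n$ gives
\[
\mean{\Nkt}=\frac{1}{(k+1)!}\int_{(\Rd)^{k+1}}\mean{g_{r_n}(\cY,\cP_n\cup\cY)}\,\prod_{i=1}^{k+1}nf(y_i)\,d\cY .
\]
Since the $k+1$ points of $\cY$ lie on $S(\cY)$, hence on $\partial B(\cY)$ and never inside the \emph{open} ball $B(\cY)$, the event $\set{(\cP_n\cup\cY)\cap B(\cY)=\emptyset}$ coincides with $\set{\cP_n\cap B(\cY)=\emptyset}$, whose probability is the void probability $\exp(-n\int_{B(\cY)}f(z)\,dz)$. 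Therefore
\[
\mean{\Nkt}=\frac{n^{k+1}}{(k+1)!}\int_{(\Rd)^{k+1}}h_{r_n}(\cY)\,e^{-n\int_{B(\cY)}f(z)\,dz}\prod_{i=1}^{k+1}f(y_i)\,d\cY .
\]
For the i.i.d.\ case the same bookkeeping yields $\mean{\Nk}=\binom{n}{k+1}\mean{g_{r_n}(\set{X_1,\dots,X_{k+1}},\cX_n)}$ with the void probability replaced by $(1-\int_{B(\cY)}f)^{n-k-1}=e^{-n\int_{B(\cY)}f\,(1+o(1))}$; since also $\binom{n}{k+1}\sim n^{k+1}/(k+1)!$, the rest of the argument is identical and I carry it out only for $\Nkt$.

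The second step is a rescaling around a base point. Substituting $\cY=(x,\,x+r_ny_1,\dots,x+r_ny_k)$, with Jacobian $\rndk$, and exploiting the translation invariance and homogeneity of $h$, of $R$, and of the ball $B(\cdot)$, one obtains $h_{r_n}(\cY)=h_1(0,\by)$, $R(\cY)=r_nR(0,\by)$, and that $B(\cY)$ is the $d$-ball of radius $r_nR(0,\by)$ about $x+r_nC(0,\by)$, so that $n\int_{B(\cY)}f=n\rnd\,\omega_dR^d(0,\by)\,f(x)\,(1+o(1))$ as $r_n\to0$. Dividing by $n$,
\[
n^{-1}\mean{\Nkt}=\frac{(n\rnd)^{k}}{(k+1)!}\int_{\Rd}\!\int_{(\Rd)^{k}}f(x)\prod_{i=1}^{k}f(x+r_ny_i)\,h_1(0,\by)\,e^{-n\int_{B(\cY)}f}\,d\by\,dx .
\]
When $n\rnd\to\lambda\in(0,\infty)$ the integrand tends a.e.\ to $f^{k+1}(x)\,h_1(0,\by)\,e^{-\lambda\omega_dR^d(0,\by)f(x)}$; because $h_1(0,\by)=0$ unless every $y_i$ lies in the ball of radius $2$ about the origin (both $0$ and $y_i$ lie on a sphere of radius $R(0,\by)\le1$), the integrand is dominated by the integrable function $\fmax^{\,k}f(x)\,h_1(0,\by)$, and dominated convergence gives precisely $\gamma_k(\lambda)$. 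For the supercritical case $n\rnd\to\infty$ one makes the further substitution $\by\mapsto(n\rnd f(x))^{-1/d}\by$ --- legitimate because lower-boundedness forces $f(x)>0$ on $\supp(f)$ --- which turns the exponent into $\omega_dR^d(0,\by)$, sends $h_1$ to $h$ in the limit (the truncation $R(0,\by)\le(n\rnd f(x))^{1/d}$ becoming vacuous), cancels the $(n\rnd)^{k}$ prefactor, and collapses the $x$-integral to $\int_{\Rd}f(x)\,dx=1$, giving $\gamma_k(\infty)$; the identity $\gamma_k(\infty)=\lim_{\lambda\to\infty}\gamma_k(\lambda)$ follows from the same substitution carried out directly inside $\gamma_k(\lambda)$.

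The main obstacle is justifying the passage to the limit inside the integral, that is, $\prod_{i=1}^{k}f(x+r_ny_i)\to f^{k}(x)$ and $n\int_{B(\cY)}f\to\lambda\omega_dR^d(0,\by)f(x)$ for a.e.\ $(x,\by)$, given that $f$ is only assumed bounded, so that its translates need not converge pointwise. The standard remedy is to prove the statement first for bounded continuous densities of compact support --- where both convergences are immediate --- and then extend to arbitrary bounded $f$ by $L^1$-approximation, or else to invoke the Lebesgue density theorem for the void-probability exponent together with an iterated $L^1$ continuity-of-translation estimate (telescoping the product) for $\prod_{i=1}^{k}f(x+r_ny_i)$; with pointwise convergence in hand the domination above makes the limit routine. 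In the supercritical regime there is an extra difficulty: after the $f(x)$-dependent rescaling, the shrinking ball $B(\cY)$ must be kept inside a neighbourhood of $\supp(f)$ on which $f$ is bounded below, and it is exactly here that lower-boundedness of $f$, and (in the companion results) convexity of $\supp(f)$, are genuinely used --- this makes the supercritical case substantially more delicate than the critical one.
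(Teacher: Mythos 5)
Your proposal is correct and follows essentially the same route as the paper: the Palm/Mecke identity reduces $\mean{\Nkt}$ to $\frac{n^{k+1}}{(k+1)!}\mean{\grn(\cY',\cY'\cup\cP_n)}$ (and $\mean{\Nk}$ to $\binom{n}{k+1}\mean{\grn(\cY,\cX_n)}$), after which a change of variables about a base point, the Lebesgue differentiation theorem, and dominated convergence produce $\gamma_k(\lambda)$. The only organizational difference is the choice of scale: the paper substitutes $\bx\to(x,x+s_n\by)$ with $s_n=n^{-1/d}$, so the truncation level $\tau_n=n^{1/d}r_n$ sits inside $h_{\tau_n}$ and the critical and supercritical cases are handled by one computation ($\tau_n\to\lambda^{1/d}$ versus $\tau_n\to\infty$), whereas you scale by $r_n$ and repair the supercritical case with the extra $(n\rnd f(x))^{-1/d}$ substitution; the two are equivalent. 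Your extra care about the a.e.\ convergence $\prod_i f(x+r_ny_i)\to f^k(x)$ for merely bounded $f$ is, if anything, more scrupulous than the paper, which invokes the DCT without further comment.

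One correction, though: convexity of $\supp(f)$ is used \emph{in this very proof}, not only ``in the companion results.'' In the supercritical regime the integrable dominating function is obtained from the bound $n\int_{B(\cY)}f\ge n\fmin\vol\bigl(B(\cY)\cap\supp(f)\bigr)\ge \const\, n\fmin\,(r_nR(0,\by))^d$, and the second inequality needs both that the center $C(\cY)\in\supp(f)$ --- which follows from $C(\cY)\in\oconv(\cY)\subset\supp(f)$ precisely because $\supp(f)$ is convex --- and the volume estimate of Lemma \ref{lem:convex_ball}, which is a statement about convex bodies. Your remark that the ball ``must be kept inside a neighbourhood of $\supp(f)$ on which $f$ is bounded below'' gestures at the issue but does not supply this estimate; without it the exponent $n\int_{B(\cY)}f$ could be far smaller than $\const\,n\rnd R^d(0,\by)$ when $B(\cY)$ lies mostly outside $\supp(f)$, and the domination justifying the interchange of limit and integral would fail. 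With that point repaired, the argument is complete.
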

Again, these terms can  be evaluated for $k=1$, in which case
\beq
\gamma_1(\lambda) &=& \frac{\lambda}{2}  \int_{\R^d}\int_{\norm{y}_2\le 2}f^2(x) e^{-\lambda \omega_d 2^{-d} \norm{y}_2^d f(x)}\,dy dx, \label{eq:gamma_1_crit}\\
\gamma_1(\infty) &=& \frac{1}{2}\int_{\R^d} {e^{-\omega_d 2^{-d} \norm{y}_2^d}\, dy} \ =\ 2^{d-1}. \notag 
\eeq

For a uniform distribution on a compact set $D\subset \R^d$ it is easy to show that $\gamma_1(\lambda)$ is given by
\begin{equation}\label{eq:gamma_1_unif}
\gamma_1(\lambda)
=  2^{d-1}({1-e^{-\lambda \omega_d /\vol(D)}}),
\end{equation}
from which it is easy to check that  $\gamma_1(\lambda)\to\gamma_1(\infty)$ as $\lambda\to\infty$.
For higher indices, we have no analytic way to compute $\gamma_k(\lambda)$. However, it can be evaluated numerically, and an example is given in
 Figure \ref{fig:gamma_k} for the uniform distribution on $[0,1]^3$.
Note that, in that example, $\gamma_0(\infty) - \gamma_1(\infty) + \gamma_2(\infty) - \gamma_3(\infty) \approx 0$. This is not a coincidence, and the explanation for this phenomenon will be given in Section \ref{sec:random_cech}, where we discuss the mean Euler characteristic of \cech complexes.

\begin{figure}[h!]
\centering
  \includegraphics[scale=0.5]{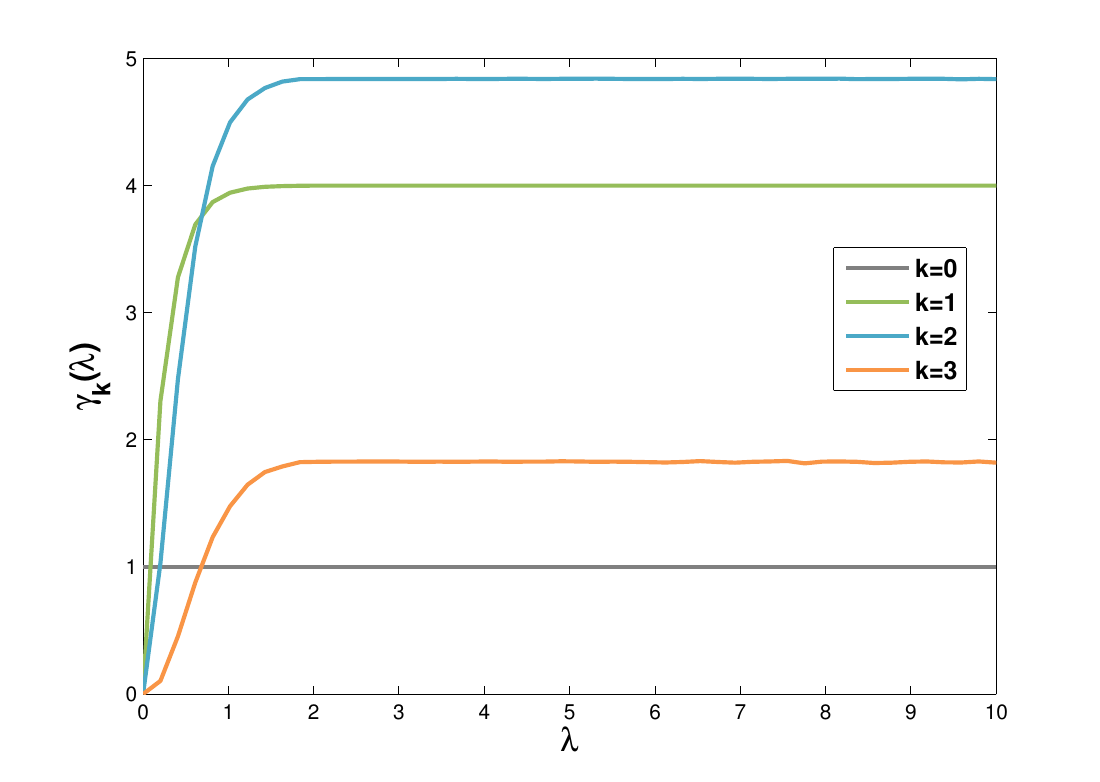}
\caption{\label{fig:gamma_k}The $\gamma_k(\lambda)$ function. In this example $d=3$, and $f(x)$ is the uniform density on $[0,1]^3$. For $k=0$ we know that $n^{-1}N_{0,n} = 1$, and for $k=1$ we have an explicit formula in \eqref{eq:gamma_1_unif}. For $k=2,3$ we used numerical integration followed by some smoothing. }
\end{figure}

Recall that, in the subcritical phase, the limit mean and the limit variance were exactly the same. For other phases, this is no longer true.
\begin{thm}[Limit variance]\label{thm:var_crit}
If $n\rnd\to \lambda\in(0,\infty]$ and  $1 \le k \le d$,
\beqq
\lim_{\ninf} n^{-1} \var{\Nk} = \sigma^2_k(\lambda)>0.
\eeqq
\end{thm}
 The expression defining $\sigma^2_k(\lambda)$ is rather complicated, and can be found in \eqref{sigma2khat:defn}.
Note, that as an immediate corollary of Theorems \ref{thm:mean_crit} and \ref{thm:var_crit}, we have the `law of large numbers' that,
 $   n^{-1}\Nk \xrightarrow{L^2} \gamma_k(\lambda).$

\begin{thm}[CLT]\label{thm:clt_crit}
If $n\rnd\to \lambda\in(0,\infty]$, then for $1 \le k \le d$,
\beq
\label{CLTa}
    \frac{\Nk - \E\Nk}{\sqrt{n}} \xrightarrow{\cL} \cN(0,\sigma^2_k(\lambda)),
\eeq
\end{thm}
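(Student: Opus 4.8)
\textbf{Proof proposal for Theorem \ref{thm:clt_crit}.}

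The plan is to realize $\Nk$ (and $\Nkt$) as a sum of local contributions indexed by $(k+1)$-subsets of the point cloud, and then invoke a stabilization-based central limit theorem from the theory of random geometric graphs. Concretely, by Lemma \ref{lem:gen_crit_point},
\[
    \Nk \;=\; \sum_{\cY\subset\cX_n,\ |\cY|=k+1} g_{r_n}(\cY,\cX_n),
\]
and similarly with $\cX_n$ replaced by the Poisson process $\cP_n$ for $\Nkt$. The first step is to rescale: in the critical regime $n\rnd\to\lambda$, so one should center each subset at one of its points, say $y_0$, and rescale the remaining $k$ points by $r_n$. The indicator $g_{r_n}$ then depends only on the rescaled configuration and on whether the rescaled ball $B(\cY)$ (of radius $O(1)$ after scaling) contains another point — an event governed by the local intensity $nf(y_0)r_n^d \to \lambda f(y_0)\omega_d R^d$. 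This exhibits $\Nk$ as a functional of the point process that is \emph{local} in the sense of \cite{penrose_random_2003}: deciding whether a given subset generates a critical point within distance $r_n$ depends only on points within distance $2r_n$ of it.

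The second step is to verify the hypotheses of the relevant CLT for stabilizing functionals of binomial and Poisson processes (Penrose--Yukich type results, as in \cite{penrose_random_2003}). The key ingredients are: (i) a uniformly bounded \emph{radius of stabilization} — here essentially $2r_n$ after rescaling, which is deterministic and hence trivially stabilizing; (ii) a moment condition — a bound on $(k+2)$-th or $(2+\delta)$-th moments of the local contribution, which follows because $g_{r_n}$ is an indicator and the number of nearby points has Poisson-type tails under the bounded-density assumption; and (iii) non-degeneracy of the limiting variance, i.e. $\sigma_k^2(\lambda)>0$. The limiting variances $\sigma_k^2(\lambda)$ and $\widehat\sigma_k^2(\lambda)$ are then identified, via the same machinery that gives Theorem \ref{thm:var_crit}, as the limits of $n^{-1}\var{\Nk}$; the strict inequality $\sigma_k^2<\widehat\sigma_k^2$ reflects the extra Poisson variability and comes out of comparing the two variance formulas \eqref{sigma2k:defn} and \eqref{sigma2khat:defn}. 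In the supercritical regime $n\rnd\to\infty$ one uses the lower-bounded, convex-support assumption on $f$ to get uniform control: the density stays bounded away from zero, so the ball-emptiness probabilities decay uniformly and the second moment computations remain valid; the rescaling is by a local window rather than globally, but the locality structure is unchanged.

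The third step handles the de-Poissonization passing from $\Nkt$ to $\Nk$. The standard approach is to condition on $|\cP_n|$ and use that a Poisson process with $n$ expected points, conditioned to have exactly $n$ points, is a binomial process; the CLT for the binomial case then follows from the Poisson CLT together with a variance-correction term, which is precisely why $\sigma_k^2(\lambda)$ differs from $\widehat\sigma_k^2(\lambda)$ — the conditioning removes the fluctuation in the total number of points. This coupling/de-Poissonization argument is again standard (see \cite{penrose_random_2003}), requiring only that the "add one point" increment of $\Nk$ has a well-controlled second moment, which holds by the boundedness of $g_{r_n}$.

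The main obstacle I expect is the supercritical case. Unlike the critical regime, there is no global rescaling that makes the functional translation-invariant, so one cannot directly cite an off-the-shelf homogeneous CLT; instead one must partition $\supp(f)$ into small cubes on which $f$ is nearly constant, prove a local CLT on each cube with the locally-scaled functional, and then sum, controlling the cross-cube dependence (which is small because of the deterministic $2r_n$ interaction range) and the edge effects near $\partial\supp(f)$. Convexity of the support is used precisely to control these boundary contributions. Establishing the non-degeneracy $\sigma_k^2(\lambda)>0$ uniformly enough to conclude, and checking that the approximation errors from the cube decomposition are $o(n)$ in variance, is the technically delicate part; everything else is a fairly routine application of known stabilization theorems.
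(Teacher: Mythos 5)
Your overall strategy --- prove the CLT for $\Nkt$ by exploiting the finite ($2r_n$) interaction range, then de-Poissonize to obtain the result for $\Nk$ --- is exactly the paper's. The paper implements the first step by partitioning $\R^d$ into cubes of side $r_n$, applying the dependency-graph form of Stein's method (Theorem \ref{thm:clt_stein}, degree at most $5^d$) to the cube counts restricted to a compact window $A$, and then letting $A\uparrow\R^d$ via a Chebyshev truncation; your ``stabilization with deterministic radius'' is the same mechanism in different clothing, and the cube decomposition you reserve for the supercritical case is in fact what the paper uses in both regimes. The only supercritical-specific work is the domination/integrability argument using the lower bound on $f$ and convexity of its support (Lemma \ref{lem:convex_ball}), plus a different choice of bound in the third- and fourth-moment estimates.

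The genuine gap is in your de-Poissonization step. You assert it requires ``only that the add-one-point increment has a well-controlled second moment, which holds by the boundedness of $\grn$.'' First, the increment $\Dmn$ is not bounded: it counts $(k+1)$-subsets containing the added point and subsets $\cY$ whose ball $B(\cY)$ swallows it, so controlling its moments needs tail bounds on local occupation numbers, not just $\grn\le 1$. Second, and more importantly, Theorem \ref{thm:depois} requires identifying the limit $\alpha$ of $\mean{\Dmn}$ uniformly over $m\in[n-n^\gamma,n+n^\gamma]$ and showing $\mean{\Dmn D_{m',n}}\to\alpha^2$; the corrected variance is then $\sigma^2_k(\lambda)=\widehat{\sigma}^2_k(\lambda)-\alpha^2$. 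In the critical and supercritical regimes $\alpha=\alpha_k(\lambda)=(k+1)\gamma_k(\lambda)-\eta_k(\lambda)$ is generically nonzero, and computing it forces the introduction of the new integral quantity $\eta_k(\lambda)$ (the mean number of critical points destroyed when a fresh point lands in $B(\cY)$), obtained by splitting $\Dmn=\Dmnp-\Dmnm$ and running a Lemma \ref{lem:lim_indicators_crit}-type limit computation on each piece. This is the actual source of the strict inequality $\sigma^2_k(\lambda)<\widehat{\sigma}^2_k(\lambda)$: the paper \emph{defines} $\sigma^2_k$ through this subtraction rather than deriving two variance formulas independently and comparing them. Without identifying $\alpha_k(\lambda)$ your argument cannot produce the correct limiting variance for $\Nk$.
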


To conclude this section, we note an interesting result which is unique to the supercritical regime, for which we define
$\Nkg \definedas \abs{\cC(n,k)}$,  the `global' number of critical points of the distance function $d_{\cX_n}$ in $\R^d$ (i.e. without requiring  \eqref{cp3}). We note first that $\Nk$ and  $\Nkg$ have identical asymptotic behaviors, at least
at the level of their first two moments and CLT:
\begin{thm}\label{thm:mean_global}
Let $f$ be lower bounded with a convex support.
Then, for $1 \le k \le d$,
\beqq
\lim_{\ninf} n^{-1} \mean{\Nkg} = \gamma_{k}(\infty),
\qquad
\lim_{\ninf} n^{-1} \var{\Nkg}= \sigma^2_{k}(\infty),
\eeqq
and
\beqq
\frac{\Nkg - \mean{\Nkg}}{\sqrt{n}} &\xrightarrow{\cL}& \cN(0,\sigma^2_k(\infty)).
\eeqq
\end{thm}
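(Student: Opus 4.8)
The plan is to reduce Theorem \ref{thm:mean_global} to the already-stated results for $\Nk$ in the supercritical regime (Theorems \ref{thm:mean_crit}, \ref{thm:var_crit} and \ref{thm:clt_crit} with $\lambda=\infty$) by showing that the `missing' critical points, i.e. those in $\cC(n,k)$ with critical value exceeding $r_n$, are negligible at the scale $\sqrt{n}$. Write $\Nkg = \Nk + M_{k,n}$, where $M_{k,n} \definedas \#\set{c\in\cC(n,k)\: d_{\cX_n}(c) > r_n}$. Using Lemma \ref{lem:gen_crit_point}, $M_{k,n}=\sum_{\cY}g_\infty(\cY,\cX_n)\ind\set{R(\cY)>r_n}$, the sum running over $(k+1)$-subsets $\cY$ of $\cX_n$. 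The key point is that $\cC(n,k)$ is \emph{globally} finite in expectation: since $\gamma_k(\infty)<\infty$ and, under the lower-boundedness and convex-support assumptions, the exponential factor $e^{-\omega_d R^d(0,\by)f(x)}$ (really $e^{-\omega_d R^d(\cdot)\fmin}$ after bounding $f$ from below on its support) forces geometric decay in $R$, one gets $\mean{M_{k,n}} \le n\,o(1)$ by dominated convergence, the integrand being dominated by an integrable function and tending to $0$ pointwise once $r_n\to0$ is replaced by the indicator $\ind\set{R>r_n}\to 0$. Hence $n^{-1}\mean{\Nkg}\to\gamma_k(\infty)$ follows immediately from Theorem \ref{thm:mean_crit}.

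For the variance and CLT, I would show $\var{M_{k,n}}=o(n)$ and then invoke Slutsky. By a standard second-moment expansion for $U$-statistics of the point process (as in \cite{penrose_random_2003}), $\var{M_{k,n}}$ splits into a diagonal-type term bounded by $\mean{M_{k,n}}=o(n)$, plus pair-correlation terms indexed by subsets $\cY,\cY'$ sharing $1\le j\le k$ points; each such term carries a factor $n^{2k+2-j}r_n^{\,d(2k+1-j)}\cdot(\text{decaying geometric integral})$, and in the supercritical range $nr_n^d\to\infty$ makes the combinatorial prefactor of order $n$ while the $\ind\set{R>r_n}$ factors drive the integral to $0$; so the whole thing is $o(n)$. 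Then $n^{-1}\var{\Nkg}=n^{-1}\var{\Nk}+2n^{-1}\cov{\Nk}{M_{k,n}}+n^{-1}\var{M_{k,n}}$, and Cauchy--Schwarz bounds the cross term by $2\sqrt{n^{-1}\var{\Nk}\cdot n^{-1}\var{M_{k,n}}}=o(1)$, giving $\sigma^2_k(\infty)$. For the CLT, $(\Nkg-\mean{\Nkg})/\sqrt n=(\Nk-\mean{\Nk})/\sqrt n+(M_{k,n}-\mean{M_{k,n}})/\sqrt n$, and the second summand converges to $0$ in $L^2$, hence in probability, so Slutsky combined with the $\lambda=\infty$ case of Theorem \ref{thm:clt_crit} yields the claim.

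I would carry this out in the order: (i) the algebraic decomposition $\Nkg=\Nk+M_{k,n}$ and its $U$-statistic representation; (ii) the first-moment bound $\mean{M_{k,n}}=o(n)$ via Palm calculus / Mecke's formula and dominated convergence, using lower-boundedness of $f$ and convexity of $\supp(f)$ to guarantee the tail integral $\int h(0,\by)e^{-c\|\cdot\|^d}\,d\by<\infty$ and that no critical points can `escape to infinity'; (iii) the second-moment bound $\var{M_{k,n}}=o(n)$ by the pair-correlation expansion; (iv) assembling mean, variance, and CLT by Slutsky.

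The main obstacle is step (ii)–(iii): controlling \emph{global} critical points without the cutoff \eqref{cp3}. A priori there could be critical points with very large critical value — generated by $(k+1)$-tuples spread across the whole sample — and ruling these out is exactly where convexity of $\supp(f)$ and $\fmin>0$ enter: convexity ensures $C(\cY)\in\oconv(\cY)\subset\supp(f)$, so the empty-ball condition \eqref{cp2} applies a ball of radius $R(\cY)$ lying (in large part) inside the support, on which $f\ge\fmin$, producing the needed exponential suppression $e^{-\omega_d\fmin R^d(\cY)\cdot n(1+o(1))}$; translating this heuristic into a rigorous dominated-convergence bound uniform in $n$, and checking that the $\ind\set{R>r_n}$ factor really does kill the integral in the limit (rather than merely keeping it finite), is the delicate part of the argument. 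Everything else is routine adaptation of the moment and CLT machinery already developed for $\Nk$.
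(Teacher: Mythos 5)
Your proposal is correct, but it takes a genuinely different route from the paper. The paper's proof of Theorem \ref{thm:mean_global} is a one-line reduction of a different kind: it re-runs the entire supercritical machinery of Theorems \ref{thm:mean_crit}, \ref{thm:var_crit} and \ref{thm:clt_crit} (Palm calculus, the dominated-convergence bound \eqref{eq:dct_supercrit}, Stein's method, de-Poissonization) with $h(\bx)$ in place of $h_{\tau_n}(\bx)$ throughout, observing that the dominating function in \eqref{eq:dct_supercrit} never used the cutoff $R\le r_n$ and that $h_{\tau_n}\to h$ pointwise. You instead keep the local theorems as black boxes and transfer them to $\Nkg$ via the decomposition $\Nkg=\Nk+M_{k,n}$, showing $\mean{M_{k,n}}=o(n)$ and $\var{M_{k,n}}=o(n)$ and invoking Slutsky. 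Both arguments hinge on exactly the same key fact, which you correctly identify as the crux: convexity of $\supp(f)$ forces $C(\cY)\in\oconv(\cY)\subset\supp(f)$, so Lemma \ref{lem:convex_ball} and $\fmin>0$ give the uniform-in-$n$ integrable dominating function $\const e^{-\fmin\const R^d(0,\by)}$, and after the rescaling $\by\mapsto x+s_n\by$ the surviving indicator $h-h_{\tau_n}=h\cdot\ind\set{R(0,\by)>\tau_n}$ tends to $0$ pointwise because $\tau_n=n^{1/d}r_n\to\infty$. Your route buys a cleaner logical structure (it makes explicit that the global theorem is the local one plus a negligibility estimate, strengthening the remark $n^{-1}\mean{\Nkg-\Nk}\to0$ to the $L^2$ scale $o(\sqrt{n})$ needed for the CLT, and without the $\log n$ condition of Proposition \ref{prp:global_vs_local}); the paper's route avoids the second-moment bookkeeping for $M_{k,n}$. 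One point you should not gloss over: in the random-sample case the $j=0$ pair term in $\var{M_{k,n}}$ does not cancel against $(\mean{M_{k,n}})^2$ by spatial independence the way it does for $\cP_n$ (this is precisely why the paper de-Poissonizes in Theorem \ref{thm:var_crit}), so you should either establish $\var{M_{k,n}}=o(n)$ for the Poisson case and de-Poissonize, or check directly that the binomial correction term also carries a factor $\ind\set{R>\tau_n}$ and hence is $o(n)$ by the same dominated convergence. This is routine but needs to be said.
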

An obvious corollary of Theorem \ref{thm:mean_global}
is that $n^{-1}\mean{\Nkg-\Nk} \to 0$.
However, much more is true:
\begin{prop}\label{prp:global_vs_local}
Under the conditions of  Theorem \ref{thm:mean_global}, and if
 $n\rnd \ge D^\star \log n$, for sufficiently large ($f$-dependent)  $D^\star$,
 then, for $1 \le k \le d$,
\beq
\label{NkgclosetoNk}
\limninf \mean{\abs{\Nkg - \Nk}} = 0.
\eeq
\end{prop}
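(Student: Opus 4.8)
## Proof Proposal for Proposition \ref{prp:global_vs_local}

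The plan is to show that, with the quantitative lower bound $n\rnd \ge D^\star \log n$, the probability that there exists any index-$k$ critical point of $d_{\cX_n}$ at distance greater than $r_n$ from $\cX_n$ tends to zero fast enough to control the expectation. Since $\Nkg \ge \Nk$ always, we have $\abs{\Nkg - \Nk} = \Nkg - \Nk \le \abs{\mathcal{C}(n,k)} - \Nk$, which counts precisely those critical points $c$ generated by some $\cY \subset \cX_n$ satisfying \ref{cp1} and \ref{cp2} but violating \ref{cp3}, i.e.\ with $R(\cY) > r_n$. The first step is to write $\mean{\Nkg - \Nk}$ using the Mecke / Palm formula (in the Poisson case) or the corresponding expansion for i.i.d.\ samples, exactly as in the proof of Theorem \ref{thm:mean_crit}, but with the indicator $\ind\set{R(\cY) > r_n}$ inserted in place of $h_1$. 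This yields
\[
\mean{\Nkg - \Nk} \;\lesssim\; n^{k+1} \int_{(\Rd)^{k+1}} f^{k+1}(x)\, h(0,\by)\, \ind\set{R(0,\by) > r_n}\, e^{-n\omega_d R^d(0,\by) f(x)}\, dx\, d\by,
\]
up to the combinatorial constant $1/(k+1)!$ and lower-order correction terms coming from the difference between the binomial and Poisson settings (these are handled as in the earlier proofs and are negligible here).

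The second step is to estimate this integral. Rescale $\by \mapsto r_n \by$ (so $R(0,r_n\by) = r_n R(0,\by)$ and $d\by \mapsto r_n^{dk}\, d\by$), which turns the prefactor into $\factor = n^{k+1} r_n^{dk}$ and the constraint into $R(0,\by) > 1$. Using $f \ge \fmin$ on its (compact, convex) support and bounding $f^{k+1} \le \fmax^{k+1}$, the exponential becomes $e^{-n r_n^d \omega_d R^d(0,\by)\fmin}$ on the compact support. The key point is that on the region $\set{R(0,\by) > 1, \ h(0,\by) = 1}$ we always have $R^d(0,\by) \ge 1$ (in fact this uses that $C(\cY) \in \oconv(\cY)$ forces the circumradius to be at least... well, simply $R > 1$ after rescaling suffices), so the exponential is at most $e^{-n r_n^d \omega_d \fmin}$. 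Pulling this uniform bound out, what remains is $\factor \cdot e^{-n\rnd \omega_d \fmin} \cdot \int_{(\Rd)^k} h(0,\by) e^{-\text{(something)}\,R^d}\,d\by$, and the latter integral converges (it is bounded by the integral defining $\gamma_k(\infty)$ up to constants, which is finite). Hence
\[
\mean{\Nkg - \Nk} \;\lesssim\; \factor \cdot e^{-c\, n\rnd}
\]
for a constant $c = \omega_d \fmin > 0$ depending only on $f$.

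The third step is the arithmetic that forces this to vanish. In the supercritical regime $n\rnd \to \infty$, and we need $\factor = n^{k+1} r_n^{dk} = n \cdot (n\rnd)^k \to 0$ after multiplication by $e^{-c\,n\rnd}$. Writing $t_n := n\rnd$, the bound reads $n\, t_n^k\, e^{-c t_n}$. If $t_n \ge D^\star \log n$ with $c D^\star > k+1$, then $e^{-c t_n} \le n^{-cD^\star}$, so the bound is at most $n^{1-cD^\star}\, t_n^k = n^{1 - cD^\star}(D^\star \log n)^k \cdot (\text{possibly larger } t_n, \text{but } t_n^k e^{-ct_n/2} \text{ is bounded})$; choosing $D^\star > (k+1)/c = (k+1)/(\omega_d\fmin)$ makes the exponent $1 - cD^\star < -k$, which beats any polynomial and any power of $\log n$, giving $\mean{\abs{\Nkg - \Nk}} \to 0$. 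This is where the precise hypothesis $n\rnd \ge D^\star \log n$ with $D^\star$ large and $f$-dependent enters, and it is the only place the lower-boundedness of $f$ (via $\fmin$) and compact convex support are genuinely used in this proposition.

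I expect the main obstacle to be step two, specifically making rigorous the claim that the exponential decay rate $e^{-c\,n\rnd}$ can be extracted uniformly over the relevant configuration space. One must be careful that the integral over $\by$ with $R(0,\by) > 1$ does not itself diverge or contribute a growing factor — this requires that the convex-hull condition $h(0,\by)=1$ together with the residual exponential weight keeps $\cY$ effectively bounded, so one should keep a fraction of the original exponent $e^{-\frac12 n\rnd \omega_d R^d f}$ to tame the tail in $\by$ rather than discarding it entirely. The other delicate point, as always in this paper, is passing between the Poisson computation (clean, via Mecke) and the i.i.d.\ one; but since we only need an upper bound on a first moment, the standard comparison (conditioning on the number of points, or de-Poissonization as used elsewhere in the paper) suffices without extra work.
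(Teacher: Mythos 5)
Your proposal is correct and is essentially the paper's own proof: both represent $\mean{\abs{\Nkg-\Nk}}$ as an integral over configurations with circumradius exceeding $r_n$, use $\fmin>0$ together with the convexity of $\supp(f)$ (Lemma \ref{lem:convex_ball}) to extract a uniform factor $e^{-c\,n\rnd}$ while reserving half of the exponent to keep the $\by$-integral bounded, and then invoke $n\rnd\ge D^\star\log n$ to kill the polynomial prefactor. The differences are cosmetic: the paper rescales by $s_n=n^{-1/d}$ so its prefactor is $n$ rather than your $\factor$ (both are annihilated by the exponential), and your decay constant should be $\const\fmin$ with the $\const$ of Lemma \ref{lem:convex_ball} rather than $\omega_d\fmin$, since a ball centered near the boundary of $\supp(f)$ need not lie inside the support --- this only changes the $f$-dependent threshold $D^\star$.
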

Thus, in the supercritical phase, the slow decrease of the radii $r_n$
implies that the global and the local number of critical points
are ultimately equal with high probability, despite the fact that
both grow to infinity with increasing $n$.
This is an interesting result, and will turn out to be important when we discuss the Euler characteristic of the \cech complex in the next section. The equality between the local and global counts can be explained if we study how well $\supp (f)$ is covered by the random balls of radius $r_n$. Denoting by $C_n$ the event that $\supp(f) \subset \bigcup_{X\in \cP} B_{r_n}(X)$, then similar methods as in \cite{hall1985coverage, janson1986random, abw2012} can be applied to show that if $n r_n^d \ge D\log n$, then $\prob{C_n}\to 1$. Thus, under the assumptions of Proposition \ref{prp:global_vs_local}, the support of $f$ is comppletely covered by the $r_n$-balls. Since all the critical points lie within the support, we have that they all should be accounted for in $\Nk$.
 Note that   \eqref{NkgclosetoNk} relies heavily  on the assumed convexity
of  $\supp(f)$. For example, take $f$ to be the uniform density on the
annulus $A = \set{x\in \R^2 : 1 \le \abs{x} \le 2}$. Then,
 for $n$ large enough, we would expect to have a maximum point (index 2) close to the origin. This critical point will be accounted for in $N_{2,n}^{(g)}$, but will be ignored by $N_{2,n}$, since its distance to $\cX_n$
is greater than $1$.  Thus, we would expect that $\E\{|{N_{2,n}^{(g)} - N_{2,n}}|\} \to 1$, which contradicts \eqref{NkgclosetoNk}

\section{Random \cech Complexes}

\label{sec:topology}

As mentioned already a number of times, the results of the previous section
regarding critical points of the distance function  have implications for
the homology and Betti numbers of certain random  \cech complexes,
and so are related to recent results of  \cite{kahle_random_2011} and
\cite{kahle_limit_2010}. Our plan in this section is to describe these
complexes and then the connections. We shall assume that the reader either
has a basic grounding in algebraic topology at the level of the first two chapters of
\cite{hatcher_algebraic_2002} or is prepared to accept a definition of
the $k$-th Betti number $\beta_k:=\beta_k(X)$ of a topological space
$X$ as the number of  $k$-dimensional `holes' in $X$, where a $k$-dimensional hole can be thought of as anything that can be continuously transformed into
 a  $k$-dimensional sphere. The zeroth Betti number,
$\beta_0(X)$, is merely the number of
connected components in $X$.

\subsection{\cech Complexes and the Distance Function}
The \cech complex generated by a set of points $\cP$
is a simplicial complex, made up of vertices, edges, triangles and higher dimensional faces. While its general definition is quite broad, we focus on
  the following special case.
\begin{defn}[\cech complex]\label{def:cech_complex}
Let $\cP = \set{x_1,x_2,\ldots}$ be a collection of points in $\R^d$, and let $\eps>0$. The \cech complex $\CC(\cP, \eps)$ is constructed as follows:
\begin{enumerate}
\item The $0$-simplices (vertices) are the points in $\cP$.
\item An $n$-simplex $[x_{i_0},\ldots,x_{i_n}]$ is in $\CC(\cP,\eps)$ if $\bigcap_{k=0}^{n} {B_{\eps}(x_{i_k})} \ne \emptyset$.
\end{enumerate}
\end{defn}
Figure \ref{fig:cech} depicts a simple example of a \cech complex in $\R^2$.
\begin{figure}[h!]
\centering
  \includegraphics[scale=0.4]{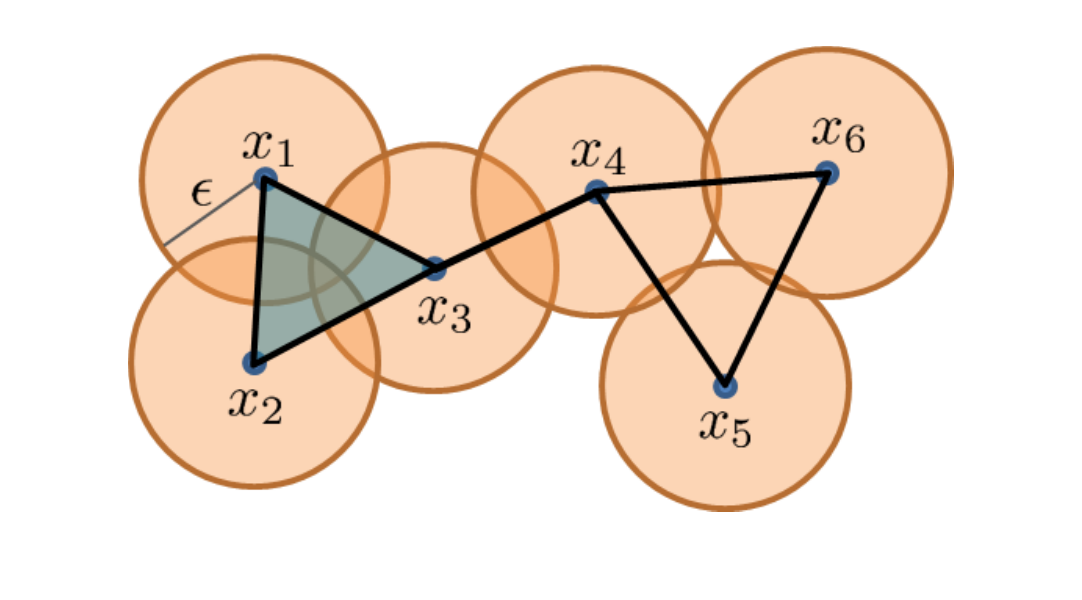}
\caption{\label{fig:cech} The \cech complex $\CC(\cP,\eps)$, for $\cP = \set{x_1,\ldots,x_6}\subset \R^2$, and some  $\eps$. The complex contains 6 vertices, 7 edges, and a single 2-dimensional face.}
\end{figure}
An important result, known as the `nerve theorem', links \cech complexes
 $\CC(\cP,\eps)$ and the neighborhood set  $\bigcup_{p\in \cP} B_{\eps}(p)$,
and states that they are  homotopy equivalent  (cf. \cite{borsuk_imbedding_1948}). Thus, for example,
they have the same Betti numbers. Furthermore, both are linked to
sublevel sets of the distance function, since it is immediate from the
definitions that
\beq
\label{distanceandcech}
\qquad
d_{\cP}^{-1} ([0,\epsilon])  = \set{x\in\R^d \: d_{\cP}(x) \le \eps}= \bigcup_{p\in \cP} B_{\eps}(p) \simeq \CC(\cP,\eps).
\eeq

\subsection{Critical Points and Betti Numbers}

Classical Morse theory, in particular the version developed in
 \cite{gershkovich_morse_1997} that applies to the distance function, tells us
that, in view of the equivalences in \eqref{distanceandcech}, there is a
connection between the critical points of
$d_\cP$ over the set $d_\cP^{-1} ([0,\epsilon])$, along with their indices, and the Betti numbers
of  $\CC(\cP,\eps)$. As usual, $\cP$ is a point set in $\R^d$, and assume that $\cP$ is in general position.
Then for every critical
point of $d_{\cP}$ at height $\epsilon$ and of index $k$,
for all small enough $\eta$, either
\beq \label{eq:morse_betti_inc}
\beta_k\left(\CC(\cP,\epsilon+\eta)\right) =
\beta_k\left(\CC(\cP,\epsilon-\eta)\right) +1,
\eeq
or\beq \label{eq:morse_betti_dec}
 \beta_{k-1}\left(\CC(\cP,\epsilon+\eta)\right) =
\beta_{k-1}\left(\CC(\cP,\epsilon-\eta)\right) - 1.
\eeq

Despite this connection, Betti numbers, dealing, as they do, with
`holes', are typically determined by  global phenomena, and this makes them
hard to study directly in the random setting. On the other hand,
the structure of critical points is a local phenomenon, which is why,
in the random case,  we
 can say more about  critical points than what is known for
  Betti numbers to date.

\subsection{Random \cech Complexes}\label{sec:random_cech}
Retaining the notation of the
previous section, and defining
$\bk  \definedas \beta_k(\CC(\cP_n,r_n))$,
our aim will be to examine relationships between the
random variables $\Nk$ and the $\bk$ and $\bkm$. In addition, we shall
 compare our results for
$\Nk$  to those of  \cite{kahle_random_2011} and \cite{kahle_limit_2010}
for $\bk$, using  Morse theory to explain the connections. Note that the results in \cite{kahle_random_2011} are phrased in terms of the random samples case (with $n$ $\iid$ points), however the proofs there can be easily adjusted to fit the Poisson case as well (as in \cite{kahle_limit_2010} or
\cite{bobrowski2012thesis}, where both the Poisson and random samples cases are treated).

In direct analogy to the results of Section  \ref{sec:results},
  \cite{kahle_random_2011,kahle_limit_2010} show that
 the limiting behavior of $\CC(\cP_n, r_n)$ splits into three main regimes, depending on the limit of $n\rnd$.
In the subcritical  ($n\rnd\to 0$) or dust phase, in which
 the \cech complex consists mostly of small disconnected particles and very few holes,  Theorem 3.2 in \cite{kahle_random_2011} states that for $ 1 \le k \le d-1$,
\[
\limninf (n^{k+2} r_n^{d({k+1})})^{-1} \mean{\bk} = D_k,
\]
for some constant $D_k$ defined in an integral form and related to
the  $\mu_k$  of our Theorem \ref{thm:mean_subcrit}.
In \cite{kahle_limit_2010} the subcritical phase is explored in more
detail, and limit theorems analogous to those of
 Theorem \ref{thm:dist_subcrit} are proved. Combining their results
 with those in Section \ref{sec:results_subcrit},  observe that the
 $\Nk$ and the $\beta_{k-1,n}$ exhibit similar limiting behavior, and are $O(\factor)$. Furthermore, 
 based on the expected values, we can informally summarize the relationship between the different $\Nk$ and $\bk$ as follows:
\beq \label{eq:Nk_diagram}
  \begin{array}{ccccccc}
    N_{1,n} & \gg & N_{2,n} & \gg & N_{3,n}& \gg  \cdots  \gg  & N_{k_c,n}\\
     &  & \approx &  & \approx & & \approx \\
    &  & \beta_{1,n} & \gg & \beta_{2,n} & \gg \cdots \gg & \beta_{k_c-1,n} ,
  \end{array}
\eeq
where by $a_n\approx b_n$ we mean that $a_n/b_n \to c\in(0,\infty)$ and by $a_n \gg b_n$ we mean that $a_n/b_n \to \infty$, and $k_c$ is as in
\eqref{kc:def}. For $k>k_c$ all terms are zero with high probability that grows with $k$.

Recall that Morse theory tells us that each critical point of index $k$
contributes either $+1$ to $\beta_{k,n}$ or $-1$ to $\beta_{k-1,n}$ (see \eqref{eq:morse_betti_inc},\eqref{eq:morse_betti_dec}). Splitting $\Nk$ accordingly as $\Nk = \Nk^+ + \Nk^-$, the diagram \eqref{eq:Nk_diagram} implies
 that $\Nk^- \gg \Nk^+$. In other words, most of the critical points of index $k$ destroy homology generators rather than create new ones.

For the other regimes, making statements about the \cech complex becomes extremely difficult, and thus the theory is still incomplete.

In the critical phase ($n\rnd\to \lambda\in(0,\infty)$), the \cech complex starts to connect and the topology becomes more complex. In addition, once $\lambda$ passes a certain threshold, a giant component emerges (cf.\ Chapter 9 of
\cite{penrose_random_2003}), from which comes the alternate description of this
phase as the
 `percolation phase'. Theorem 4.1 in \cite{kahle_random_2011} states that for $1 \le k \le d-1$,
\[
\limninf n^{-1} \mean{\bk} \in (0,\infty),
\]
although the exact limit is not computed. This agrees with the results in Section \ref{sec:results_crit} of this paper. The main difference between the
two
sets of results is that  for critical points we are able to give a closed
form expression for the limit mean of $\Nk$ (Theorem \ref{thm:mean_crit}), as well as stronger limit results (Theorems  \ref{thm:clt_crit}--\ref{prp:global_vs_local}).
 This will be useful below, when we discuss  Euler characteristics.

In the supercritical regime ($n\rnd\to\infty$) even less is known about the \cech complex. In general, the \cech complex becomes highly connected, the
topology becomes simpler and the Betti numbers decrease. Theorem 6.1 of
 \cite{kahle_random_2011} gives the precise results that if $f$ is a uniform density with a compact and convex support, and $\limninf(\log n / n)^{-1/d}r_n >0$ , then
\begin{equation}\label{eq:kahle_supercrit}
\limninf \prob{\beta_{0,n} = 1, \beta_{1,n} = \cdots = \beta_{d-1,n} = 0}  = 1,
\end{equation}
which is described in \cite{kahle_random_2011} by saying that $\CC(\cP_n,r_n)$ is ``asymptotically almost surely contractible''.
We have no analogous result about critical points, nor could we,
since $\Nk$ is  $O(n)$ and thus $\Nk \to \infty$ (Section \ref{sec:results_crit}). However, Corollary \ref{cor:cech_ec} below gives information about
the Euler characteristic of the \cech complex which is different from,
but related to, \eqref{eq:kahle_supercrit}. (Note  that
\eqref{eq:kahle_supercrit} requires that the underlying
 probability density  is lower bounded with  convex support,
the same assumption we adopted Section \ref{sec:results_crit}.)

To conclude this section, we present a novel statement about the \cech complex $\CC(\cP_n, r_n)$ which can be made based on the results in Section \ref{sec:results}. The Euler characteristic of a simplicial complex $\cS$ has a number
of equivalent definitions, and a number of important applications. One of
the definitions,  via Betti numbers, is
\begin{equation}\label{eq:ec_betti}
    \chi(\cS) = \sum_{k=0}^{\infty} (-1)^k \beta_k(\cS).
\end{equation}
However,  $\chi(\cS)$ also has a
definition via indices of critical points of appropriately defined functions
supported on $\cS$, and this leads to
\begin{cor}\label{cor:cech_ec}
Let $\chi_n$ be the Euler characteristic of $\CC(\cP_n, r_n)$. Then
\beq
\label{corollary:equ}
\qquad \limninf n^{-1} \mean{\chi_n}=
\begin{cases}
1 & n\rnd \to 0,\\
1+\sum_{k=1}^d {(-1)^k \gamma_k(\lambda) }&
n\rnd \to \lambda\in(0,\infty),\\
0 & n\rnd \to \infty.
 \end{cases}
\eeq
Moreover, when $n\rnd\to \infty$ and $n\rnd \ge D^\star \log n$ (with $D^\star$ as in Proposition \ref{prp:global_vs_local}), then $\mean{\chi_n}\to 1$.
\end{cor}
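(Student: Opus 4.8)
The plan is to reduce everything to critical points via Morse theory and then feed in the limit–mean theorems of Section~\ref{sec:results}, together with one deterministic Euler–characteristic identity for the \emph{global} critical points. The starting point is the homotopy equivalence \eqref{distanceandcech}, which gives $\chi_n=\chi\big(d_{\cX_n}^{-1}([0,r_n])\big)$. Almost surely the sample $\cX_n$ is in general position, so $d_{\cX_n}$ has only nondegenerate critical points in the sense of Definition~\ref{def:crit_pts} and the Morse theory of \cite{gershkovich_morse_1997} applies; almost surely $r_n$ is also not a critical value (critical values have an absolutely continuous law once $f$ has a density). As the level is raised, passing a critical point of index $k$ changes the Euler characteristic of the sublevel set by $(-1)^k$, and the critical value of the point generated by $\cY$ equals $R(\cY)=d_{\cX_n}(C(\cY))$, so those with value $\le r_n$ are exactly the ones counted by $N_{k,n}$ (with $N_{0,n}=n$). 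This yields, a.s.,
\[
\chi_n \;=\; \sum_{k=0}^{d}(-1)^k N_{k,n}\;=\; n+\sum_{k=1}^{d}(-1)^k N_{k,n},
\]
and since $N_{k,n}\le\binom{n}{k+1}$ all expectations are finite, so $n^{-1}\mean{\chi_n}=1+\sum_{k=1}^d(-1)^k\,n^{-1}\mean{N_{k,n}}$.

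Next I would plug in the limit means. In the subcritical range, Theorem~\ref{thm:mean_subcrit} gives $n^{-1}\mean{N_{k,n}}\sim\mu_k(n\rnd)^k\to0$ for every $k\ge1$, hence $n^{-1}\mean{\chi_n}\to1$. In the critical and supercritical ranges, Theorem~\ref{thm:mean_crit} gives $n^{-1}\mean{N_{k,n}}\to\gamma_k(\lambda)$, so $n^{-1}\mean{\chi_n}\to1+\sum_{k=1}^d(-1)^k\gamma_k(\lambda)$; for $\lambda\in(0,\infty)$ this is already the asserted value. The only remaining point is to identify $1+\sum_{k=1}^d(-1)^k\gamma_k(\infty)$ with $0$, i.e.\ to prove $\sum_{k=0}^d(-1)^k\gamma_k(\infty)=0$.

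For that I would use the global counts $\Nkg=\abs{\cC(n,k)}$ and run Morse theory for $d_{\cX_n}$ on all of $\R^d$. The distance function is proper, and for any $R$ exceeding both $\diam(\cX_n)$ and the (finite, a.s.) largest critical value, $d_{\cX_n}^{-1}([0,R])=\bigcup_i B_R(X_i)$ has all of its defining balls containing the fixed point $X_1$, so by the nerve theorem it is homotopy equivalent to a full simplex, hence contractible, with Euler characteristic $1$. Since $R$ exceeds all critical values, Morse theory then gives the deterministic identity $\sum_{k=0}^d(-1)^k\Nkg=1$ a.s. Taking expectations, dividing by $n$ and letting $\ninf$, Theorem~\ref{thm:mean_global} (applicable because in the supercritical regime $f$ is assumed lower bounded with convex support) yields $\sum_{k=0}^d(-1)^k\gamma_k(\infty)=\limninf n^{-1}=0$, which completes the supercritical case. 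Finally, for the ``moreover'' statement, set $\chi_n^{(g)}:=\sum_{k=0}^d(-1)^k\Nkg=1$ a.s.; since $N_{0,n}=\Nkg|_{k=0}=n$, $\chi_n-1=\sum_{k=1}^d(-1)^k(N_{k,n}-\Nkg)$, so $\mean{\abs{\chi_n-1}}\le\sum_{k=1}^d\mean{\abs{\Nkg-N_{k,n}}}\to0$ by Proposition~\ref{prp:global_vs_local}, whence $\mean{\chi_n}\to1$.

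The main obstacle is the third step: realizing that the \emph{global} alternating sum of critical-point counts is identically $1$ (via the contractibility of large sublevel sets and the nerve theorem) and transferring this identity to the $\gamma_k(\infty)$ through Theorem~\ref{thm:mean_global}. The rest is Morse-theoretic bookkeeping plus the a.s.\ caveats (finiteness and nondegeneracy of the critical set, $r_n$ not a critical value), all of which follow from $f$ having a density and the sample being in general position.
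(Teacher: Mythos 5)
Your proof is correct and follows essentially the same route as the paper: expand $\chi_n$ as the alternating sum of the $N_{k,n}$, apply Theorems \ref{thm:mean_subcrit} and \ref{thm:mean_crit} for the first two regimes, and handle the supercritical regime and the ``moreover'' claim via the deterministic identity $\sum_{k=0}^d(-1)^k\Nkg=1$ together with Theorem \ref{thm:mean_global} and Proposition \ref{prp:global_vs_local}. The only difference is cosmetic: you spell out the a.s.\ nondegeneracy caveats and justify the global identity via contractibility of large sublevel sets, details the paper leaves implicit by simply invoking $\chi(\R^d)=1$.
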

The proof of Corollary \ref{cor:cech_ec} is presented in Section \ref{sec:ec_proof}.
Note that \eqref{corollary:equ} cannot  be proven using only the
existing results
on  Betti numbers, since the values of the limiting mean in
the critical and supercritical regimes are not available.
This demonstrates one of the advantages of studying the homology of the
\cech complex via the distance function.

In closing we note some of the implications of Corollary \ref{cor:cech_ec}.
 In the subcritical phase, we have that $\chi_n \sim n$,
which agrees with the intuition developed so far
that, in this range, the \cech complex consists of mostly small disconnected particles and very few holes. In the critical range we have a non-trivial limit resulting from the fact that the \cech complex has many holes of all possible dimensions. In the supercritical range, $\chi_n \sim 1$ which is exactly what we get when $\beta_{0,n}=1,\beta_{1,n}=\cdots=\beta_{d-1,n} = 0$ (cf.\ \eqref{eq:ec_betti}, \eqref{eq:kahle_supercrit}). Since $n^{-1}\mean{\chi_n} \to 0$ in this regime, it is clear now why the numerics of Figure \ref{fig:gamma_k} showed
that $\sum_{k=0}^3 (-1)^k \gamma_k(\infty) \approx 0$. Finally, note that in a sequel \cite{bobrowski_topology_2014}, we explore the \cech complex when the samples are generated by a distribution supported on a closed manifold $\cM$. In this case we can make a much more concrete statement,  and prove that $\bk \to \beta_k(\cM)$ with an appropriate choice of  radius $r_n$. A different direction, in which the underlying samples are generated by stochastic processes with dependence between the points, can be found in \cite{YogiRobert}.

\section{Some Notation and Elementary Considerations}
The remaining sections of the paper are devoted to proofs of the results
in Sections \ref{sec:results}  and \ref{sec:topology}, and are organized
according to
situations: sub-critical (dust), critical (percolation), and super-critical. In this section
we list some common notation and note some simple facts that will be used
in many of them.

\begin{list}{\labelitemi}{\leftmargin=1em}
\item Henceforth, $k$ will be fixed, and whenever we use $\cY,\cY'$ or $\cY_i$ we implicitly assume that $\abs{\cY}=\abs{\cY'} = \abs{\cY_i} = k+1$, unless stated otherwise.
\item Usually, finite subsets of $\R^d$ will be denoted calligraphically ($\cX,\cY$). However inside integrals we  use boldfacing and lower case ($\bx,\by$).
\item For $x\in\R^d$, $\bx \in (\R^d)^{k+1}$ and $\by\in(\R^d)^{k}$, we use the shorthand
\begin{align*}
f(\bx) &\definedas f(x_1)f(x_2)\cdots f(x_{k+1}), \\
f(x+r_n \by) &\definedas f(x+r_n y_1) f(x+r_n y_2)\cdots f(x+r_n y_k),\\
h(0,\by) &\definedas h(0,y_1,\ldots,y_k).
\end{align*}
\item The symbol `$\const$', denotes a constant value, which might depend on $d$ (ambient dimension), $f$ (the probability density of the samples), and $k$ (the Morse index), but on neither $n$ nor $r_n$. The actual value of $\const$
may change between and even within lines.
\item While not exactly a notational issue, we shall often use the fact
that, for every $k$, $n^{-k}\binom{n}{k} \to 1/k!$ as $n\to\infty$, and
 there is a $\const$ such that $\binom{n}{k} \le \const n^k$.
\end{list}

\begin{lem}\label{lem:points_in_ball}
Let $\cX = (X_1,\ldots,X_k)$ be a set of $k$ i.i.d.\ points in $\Rd$ sampled
from  a bounded density $f$. Then there exists a constant $\const$ such that
\[
\prob{\cX \textrm{ is contained in a ball with radius $r$}} \le \const r^{d(k-1)}.
\]
\end{lem}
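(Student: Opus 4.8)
The plan is to bound the probability that the $k$ points $X_1, \ldots, X_k$ all lie in some common ball of radius $r$ by first fixing the position of one point, say $X_1 = x$, and then asking that the remaining $k-1$ points lie within distance $2r$ of $x$ (since if all $k$ points lie in a ball of radius $r$, then in particular every other point lies within distance $2r$ of $X_1$). More precisely, I would write
\[
\prob{\cX \textrm{ lies in a ball of radius } r} \le \prob{\,\forall\, 2 \le i \le k:\ \norm{X_i - X_1} \le 2r\,}
= \int_{\Rd} f(x) \paren{\int_{B_{2r}(x)} f(y)\, dy}^{k-1} dx.
\]
Since $f$ is bounded, $\int_{B_{2r}(x)} f(y)\, dy \le \fmax \cdot \vol(B_{2r}(x)) = \fmax\, \omega_d (2r)^d = \const\, r^d$, uniformly in $x$. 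Plugging this in and using $\int_{\Rd} f(x)\, dx = 1$ gives the bound $\const\, r^{d(k-1)}$, as desired.

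The only point requiring a little care is the passage from "the $k$ points lie in \emph{some} ball of radius $r$'' to the event involving distances from $X_1$: the former is not literally an intersection over $i$ of events, but it is \emph{contained} in the event $\set{\forall i:\ \norm{X_i - X_1} \le 2r}$, because a ball of radius $r$ has diameter $2r$. So the inequality above is just monotonicity of probability under inclusion of events, and no measurability subtlety arises (the "there exists a ball'' event is easily seen to be measurable, e.g.\ it equals $\set{\diam\set{X_1,\ldots,X_k} \le 2r}$ up to the trivial observation that a finite set of diameter $\le 2r$ is contained in a closed ball of radius $r$ centered at any of its points, and in fact in one of radius $r$ by Jung's theorem, though we do not even need the sharp constant here).

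I do not expect a genuine obstacle in this lemma — it is an elementary first-moment / volume estimate. If anything, the mild subtlety is just making sure the constant $\const$ absorbs the dimensional factors ($\omega_d$, $2^d$) and the bound $\fmax$ on the density, which is legitimate since $\const$ is by convention allowed to depend on $d$ and $f$ but not on $n$ or $r_n$ (and here not even on $r$). One could alternatively condition on $X_1$ being the "leftmost'' point or use a covering argument, but conditioning on an arbitrary fixed point of the set and invoking the diameter bound is the cleanest route, and it immediately yields the stated exponent $d(k-1)$ with the right normalization.
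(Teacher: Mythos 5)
Your proposal is correct and follows essentially the same route as the paper: condition on $X_1$, note that all other points must lie within distance $2r$ of it, and bound the inner integral by $\fmax$ times the volume of $B_{2r}(x)$. The additional remarks on measurability and Jung's theorem are fine but not needed.
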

\begin{proof}
If $\cX$ is bounded by a ball with radius $r$, then $X_2,\ldots,X_k$ are all within distance $2r$ from $X_1$, thus
\begin{align*}
\prob{\cX \textrm{ is bounded by a ball of radius $r$}} &\le \int_{\Rd} \param{\int_{B_{2r}(x)} f(y)dy}^{k-1} f(x)dx \\
&\le \int_{\Rd} \param{\fmax \vol(B_{2r}(x)}^{k-1} f(x)dx \\
&= \fmax^{k-1}\omega_d^{k-1} (2r)^{d(k-1)}\\
&\definedas \const r^{d(k-1)},
\end{align*}
where $\fmax \definedas \sup_{x\in\R^d} f(x)$, and $\omega_d$ is the volume of the unit ball in $\R^d$.
\end{proof}

\section{Means for the Subcritical Range ($n\rnd\to 0$)}
We start by proving Theorem \ref{thm:mean_subcrit} (the limit expectation), which requires the following important lemma. Note that the lemma has two implications. Firstly, it gives a precise order of magnitude, with constant, for the
probability that $k+1$ points in the $r_n$-neighborhood of a point in $\cX_n$
generate an index-$k$ critical point. Secondly, it implies that if an
additional, high density set of Poisson points is added to the picture, the
probability that any of these will be in the ball containing the $k+1$
original points is of a smaller order of magnitude.

\begin{lem}\label{lem:lim_indicators}
Let $\cY \subset \cX_{n}$, be a set of $k+1$ random $\iid$ points with density function $f$, independent of the Poisson process $\cP_n$. Then,
\[
\limninf r_n^{-dk}\mean{\hrn(\cY)}  = \limninf r_n^{-dk}\mean{\grn(\cY,\cY\cup\cP_n)}   = (k+1)!\mu_k.
\]
\end{lem}
    \begin{proof}

Note that from the definition of $h_{\eps}(\cdot)$, it follows that
\[
h_{\eps}(x,x+\eps \by) \definedas h_{\eps}(x,x+\eps y_1,\ldots, x+\eps y_k) = h_1(0,\by).
\]
Thus, using the change of variables $\bx \to (x,x+r_n\by)$,
\begin{align}
\mean{\hrn(\cY)} &= \int_{(\R^d)^{k+1}} f(\bx) \hrn(\bx) d\bx \non \\
&= r_n^{dk} \int_{\R^d} \int_{(\R^d)^k} f(x)f(x+r_n\by) \hrn(x,x+r_n\by) d\by dx\non \\
&= r_n^{dk} \int_{\R^d} f(x)\int_{(\R^d)^k}f(x+r_n\by) h_1(0,\by) d\by dx.\label{eq:mean_hrn_integral}
\end{align}
Now, for $h_1(0,\by)$ to be nonzero, all the elements $y_1,\ldots,y_k\in \R^d$ must lie inside $B_2(0)$ - the ball of radius $2$ around the origin. Therefore,
\[
\abs{f(x+r_n \by) h_1(0,\by)} \le \fmax^k \ind_{B_2(0)}(y_1)\cdots \ind_{B_2(0)}(y_k),
\]
and applying the dominated convergence theorem (DCT) to \eqref{eq:mean_hrn_integral} yields
\begin{equation}
\limninf \int_{(\R^d)^k}f(x+r_n\by) h_1(0,\by) dxd\by = f^k(x) \int_{(\R^d)^k} h_1(0,\by)d\by,
\end{equation}
from which follows
\begin{equation} \label{eq:mean_hrn_lim}
\limninf r_n^{-dk}\mean{\hrn(\cY)} = \int_{\R^d} f^{k+1}(x) dx \int_{(\R^d)^k} h_1(0,\by)d\by = (k+1)!\mu_k,
\end{equation}
completing the proof for $\hrn(\cY)$.

Next, the definition of $\cP_n$ as a Poisson process with intensity $nf(x)$ implies
\[
\cmean{\grn(\cY, \cY\cup \cP_n)}{\cY} =\hrn(\cY) \cprob{B(\cY)\cap \cP_n = \emptyset}{\cY} = \hrn(\cY) e^{-np(\cY)}.
\]
Thus,
\begin{align*}
\mean{\grn(\cY,\cY\cup \cP_n)} &= \mean{\cmean{\grn(\cY,\cY\cup \cP_n)}{\cY}} \\
&=\int_{(\R^d)^{k+1}} f(\bx) \hrn(\bx) e^{-np(\bx)} d\bx\non\\
&=r_n^{dk} \int_{\R^d} f(x) \int_{(\R^d)^k} f(x+r_n\by) h_1(0,\by)e^{-np(x,x+r_n\by)}d\by dx,
\end{align*}

The integrand here is  smaller or equal to  the one in \eqref{eq:mean_hrn_integral}, therefore we can safely apply the DCT to it. To find the limit, first note that
\begin{align*}
np(x,x+r_n\by) &= n \int_{B(x,x+r_n\by)} f(z)dz \\
&=  n {\vol(B(x,x+r_n \by))} \frac{\int_{B(x,x+r_n\by)}f(z)dz }{\vol(B(x,x+r_n \by))} \\
&=  n \omega_d(r_n R(0,\by))^d \frac{\int_{B(x,x+r_n\by)}f(z)dz }{\vol(B(x,x+r_n \by))}.
\end{align*}
Applying the Lebesgue differentiation theorem yields
\[
\limninf \frac{\int_{B(x,x+r_n\by)}f(z)dz }{\vol(B(x,x+r_n \by))} = f(x).
\]
Therefore, since $n\rnd \to 0$, we have
\begin{equation}\label{eq:lim_np}
\limninf np(x,x+r_n\by) = 0.
\end{equation}
Thus, we have
\[
\limninf r_n^{-dk} \mean{\grn(\cY,\cP_n)} = \limninf r_n^{-dk} \mean{\hrn(\cY)} = (k+1)!\mu_k,
\]
and we are done.
\end{proof}

Using the previous lemma, it is now easy to prove Theorem \ref{thm:mean_subcrit}.
\begin{proof}[Proof of Theorem \ref{thm:mean_subcrit}]
Note  that
$
\Nk = \sum_{\cY \subset \cP_n} \grn(\cY, \cP_n)
$.
 Applying Theorem \ref{thm:prelim:palm} therefore yields that
\[
\mean{\Nk} = \frac{n^{k+1}}{(k+1)!} \mean{\grn(\cY',\cY' \cup \cP_n)},
\]
where $\cY'$ is a copy of $\cY$ independent of $\cP_n$.
 Lemma \ref{lem:lim_indicators} then implies
\[
\limninf (\factor)^{-1} \mean{\Nk} = \mu_k,
\]
as required.
\end{proof}

\section{Variances and Limit Distributions for the Subcritical Range}

The proofs of Theorems \ref{thm:var_subcrit} and \ref{thm:dist_subcrit}
 split into three different cases, depending on the limit of $\factor$.

\subsection*{\bf{Case 1:} $\factor \to 0$}
We start with the limit variance for this case.
\begin{proof}[Proof of Theorem \ref{thm:var_subcrit}]

We start by writing
\begin{align}
\mean{\Nk^2} &= \mean{\sum_{\cY_1\subset\cP_n}\sum_{\cY_2\subset\cP_n} {\grn(\cY_1,\cP_n)\grn(\cY_2, \cP_n)}}
\notag
\\
&=\sum_{j=0}^{k+1}\mean{\sum_{\cY_1\subset\cP_n}\sum_{\cY_2\subset\cP_n}{ \grn(\cY_1,\cP_n)\grn(\cY_2,\cP_n)}\ind\set{\abs{\cY_1\cap\cY_2}=j}} \notag
\\
&\definedas \sum_{j=0}^{k+1} \mean{{I}_j}. \label{eq:def_I_j}
\end{align}

Note that
\[
I_{k+1} = \sum_{ \cY_1 \subset \cP_n }
    \grn(\cY_1,\cP_n) =\Nk.
\]
Thus, from Theorem \ref{thm:mean_subcrit},
\begin{equation}\label{eq:lim_I_k+1}
\limninf (\factor)^{-1}\mean{{I}_{k+1}} = \mu_k.
\end{equation}

Next, for $0\le j<k+1$, using Corollary \ref{cor:prelim:palm2} we have
\begin{align*}
\mean{I_j} &= \const n^{2k+2-j}\mean{ \grn(\cY_1',\cY_{12}'\cup\cP_n)\grn(\cY_2',\cY_{12}'\cup\cP_n)}_{\abs{\cY_1'\cap\cY_2'}=j},
\end{align*}
where $\cY'_{12} = \cY'_1 \cup \cY'_2$ is a set of $2k-j$ $iid$ points in $\Rd$ with density $f(x)$, independent of $\cP_n$, and $\abs{\cY_1'\cap\cY_2'} = j$.

For $0 < j < k+1$, if $\abs{\cY_1\cap\cY_2}=j$ and $\grn(\cY_1',\cY_{12}'\cup\cP_n)\grn(\cY_2',\cY_{12}'\cup\cP_n)=1$, then necessarily the $2k+2-j$ points in $\cY'_1\cup\cY'_2$ are bounded by a ball of radius $2r_n$, and using Lemma \ref{lem:points_in_ball} we have
\[
\mean{I_j} = \le \const n^{2k+2-j} r_n^{d(2k+1-j)}.
\]
Thus,
\begin{equation}\label{eq:lim_I_j}
(\factor)^{-1}\mean{ I_j} \le \const(n\rnd)^{k+1-j}\to 0.
\end{equation}

For $j=0$, the sets $\cY'_1$ and $\cY'_2$ are independent. Since $\grn(\cY'_i,\cY'_{12}\cup\cP_n)\le \hrn(\cY'_i)$, we have
\[
\mean{ \grn(\cY'_1,\cY'_{12}\cup\cP_n)\grn(\cY'_2,\cY'_{12}\cup\cP_n)} \le \mean{ \hrn(\cY'_1)\hrn(\cY'_2)} = \param{\mean{\hrn(\cY'_1)}}^2.
\]
Therefore,
\[
\mean{I_{0}} \le \const n^{2(k+1)} \param{\mean{\hrn(\cY)}}^2.
\]
Using Lemma \ref{lem:lim_indicators} together with the fact that $\factor\to 0$ yields
\begin{equation}\label{eq:lim_I_0}
(\factor)^{-1} \mean{I_0} \le \const \factor \param{r_n^{-dk}\mean{\hrn(\cY)} }^2 \to 0.
\end{equation}

Combining \eqref{eq:lim_I_k+1}, \eqref{eq:lim_I_j}, and \eqref{eq:lim_I_0} yields
\[
\limninf (\factor)^{-1} \mean{\Nk^2} = {\mu_k}.
\]
In addition, Theorem \ref{thm:mean_subcrit} implies
\[
(\factor)^{-1}(\mean{\Nk})^2 = \factor\param{(\factor)^{-1}\mean{\Nk}}^2\to 0.
\]
Therefore, since $\text{var}\{\Nk\} = \E\Nk^2 - (\E\Nk)^2$, we conclude that
\[
\limninf (\factor)^{-1}\var{\Nk} = {\mu_k},
\]
which completes the proof.
\end{proof}


Next, we wish to prove the first part of Theorem \ref{thm:dist_subcrit}, i.e.\! that $\Nk\convinltwo 0$.

\begin{proof}[Proof of Theorem \ref{thm:dist_subcrit} - Part 1]
Clearly, it suffices to show that
\beq
\label{converges:eq}
\limninf \mean{\Nk^2} =  0.
\eeq
However, in the previous proof, we saw that
\[
\limninf (\factor)^{-1} \mean{\Nk^2} = {\mu_k}.
\]
Since $\factor\to 0$, \eqref{converges:eq} follows immediately, and we are
done.
\end{proof}
\subsection*{\bf{Case 2:} $\factor \to \alpha \in (0,\infty)$}

\begin{proof}[Proof of Theorem \ref{thm:var_subcrit}]
The proof in this case is similar to the previous one.
We define $I_j$ the same way as in \eqref{eq:def_I_j}.
The same arguments that led to \eqref{eq:lim_I_k+1} and \eqref{eq:lim_I_j} can be repeated here, providing the limits of $\mean{I_j}$, for $0< j \le k+1$ .
The only difference
is in how to bound the term $\mean{I_0}$.
For that, a proof in the spirit of Lemma \ref{lem:lim_indicators} can be used to show that
\[
\limninf r_n^{-2dk}\mean{ \grn(\cY'_1,\cY'_{12}\cup\cP_n)\grn(\cY'_2,\cY'_{12}\cup\cP_n)}_{\abs{\cY'_1\cap\cY'_2}=0}= ((k+1)!\mu_k)^2.
\]

Using Corollary \ref{cor:prelim:palm2}, we have
\begin{align*}
&\limninf(\factor)^{-1}\mean{{I}_0} \\
&\quad= \limninf (\factor)^{-1}\frac{n^{2k+2}}{((k+1)!)^2}\mean{ \grn(\cY'_1,\cY'_{12}\cup\cP_n)\grn(\cY'_2,\cY'_{12}\cup\cP_n)}_{\abs{\cY'_1\cap\cY'_2}=0},
\end{align*}
and therefore,
\[
\limninf(\factor)^{-1}\mean{{I}_0} =\alpha\mu_k^2.
\]

Finally, we also have
\[
\limninf (\factor)^{-1} \param{\mean{\Nk}}^2 = \alpha\mu_k^2.
\]
This completes the proof.

\end{proof}


For $X,Y$ random variables taking values in $\N$, define the total variation distance to be $d_{\mathrm{TV}}(X,Y) := \sup_{A\subset \N} \abs{\prob{X\in A} - \prob{Y\in A}}$.
To prove the Poisson limit of Theorem \ref{thm:dist_subcrit}, we need the following lemma.
\begin{lem}\label{lem:lim_pois_hrn}
Let $\Sk := \sum_{\cY\subset\cP_{n}} \hrn(\cY)$, and let $Z\sim\pois{\mean{\Sk}}$. If $n\rnd\to 0$, then
\[
\dtv{\Sk}{Z} \le \const n\rnd.
\]
\end{lem}
\begin{proof}
The proof is very similar to the proof of Theorem 3.4 in \cite{penrose_random_2003}, and uses the Poisson approximation given in Theorem \ref{thm:prelim:stein_bern}.
Let $A\subset \N$ be a set of natural numbers, we wish to bound the difference
\[
\Big|\prob{\Sk\in A} - \prob{Z\in A}\Big|.
\]
Start by conditioning on
 $\abs{\cP_n}$, the number of points in $\cP_n$.
\begin{equation}\label{eq:dtv_pois}
\begin{split}
&\Big|\prob{\Sk\in A} - \prob{Z\in A}\Big| \\
&\quad= \Big|\sum_{m=0}^\infty \param{\cprob{\Sk\in A}{\abs{\cP_n}=m} - \prob{{Z}\in A}}\prob{\abs{\cP_n}=m}\Big|\\
&\quad\le \sum_{m=0}^\infty \Big|\cprob{\Sk\in A}{\abs{\cP_n}=m} - \prob{{Z}\in A}\Big|\,\prob{\abs{\cP_n}=m}.
\end{split}
\end{equation}

Given $\abs{\cP_n} = m$, let $\cI_m = \set{\bi \subset \set{1,2,\ldots,m} \: \abs{\bi}=k+1}$.
Then, for $\bi = \set{i_0,\ldots,i_k}$, and $\cX_{\bi} = \set{X_{i_0},\ldots,X_{i_k}}$, we can write
\[
\Sk = \sum_{\bi\in \cI_m} \hrn(\cX_\bi).
\]

Set  $\cN_\bi = \set{\bj\in\cI_m \: \abs{\bi\cap\bj} > 0}$, and let $\sim$ be a relation on $\cI_m$ such that $\bi\sim\bj$ if and only if $\bj \in \cN_i$. For $\bi\ne \bj$, $\cX_\bi$ and $\cX_\bj$ are independent unless $\bj\in\cN_\bi$. Thus, the graph $(\cI_m, \sim)$ is the dependency graph for $\xi_\bi \definedas \hrn(\cX_\bi)$.

Now, if $\hrn(\cX_\bi)\ne 0$ then the $k+1$ points in $\cX_\bi$ are bounded by a ball of radius $r_n$, and using Lemma \ref{lem:points_in_ball} we have
\[
p_\bi \definedas \mean{\xi_\bi} \le \const r_n^{dk}.
\]
Therefore,
\begin{align*}
\sum_{\bi\in\cI_m}\sum_{\bj\in\cN_\bi} p_{\bi} p_{\bj} &\le \binom{m}{k+1}\param{\binom{m}{k+1}-\binom{m-k-1}{k+1}}\const r_n^{2dk} \le \const m^{2k+1} r_n^{2dk} .
\end{align*}
Next, if $\bi\sim\bj$ with $\abs{\bi\cap\bj} = l >0 $, and $\hrn(\cX_\bi)\hrn(\cX_\bj)\ne 0$, then necessarily the $2k+2-l$ points in $\cX_\bi \cup \cX_\bj$ are bounded by a ball of radius $2r_n$, and therefore,
\[
p_{\bi,\bj} \definedas \mean{\xi_\bi\xi_\bj} \le \const r_n^{d(2k+1-l)}.
\]
Thus,
\begin{align*}
\sum_{\bi\in\cI_m}\sum_{\bj\in\cN_\bi\backslash \set{\bi}} p_{\bi,\bj} &\le \sum_{l=1}^k \binom{m}{k+1} \binom{m-k-1}{k+1-l} \binom{k+1}{l}\const r_n^{d(2k+1-l)}\\
&\le \const\sum_{l=1}^k m^{2k+2-l} r_n^{d(2k+1-l)}.
\end{align*}

Finally, using Lemma \ref{lem:lim_indicators} it is easy to prove that
\[
\limninf (\factor)^{-1} \mean{\Sk} = {\mu_k},
\]
which implies that
\[
\frac{1}{\mean{\Sk}} \le \const(\factor)^{-1}.
\]

Therefore, from Theorem \ref{thm:prelim:stein_bern}, we can conclude that
\[
\abs{\cprob{\Sk\in A}{\abs{\cP_n}=m} - \prob{{Z}\in A}} \le \const n^{-(k+1)}\sum_{l=1}^k m^{2k+2-l} r_n^{d(k+1-l)}.
\]
Substituting back into \eqref{eq:dtv_pois}, we have
\[
\dtv{\Sk}{{Z}} \le  \const n^{-k+1} \sum_{l=1}^k r_n^{d(k+1-l)} \mean{\abs{\cP_n}^{2k+2-l}}.
\]
Since $|\cP_n|\sim\pois{n}$, it is easy to find a constant $\const$ such that
\[
\mean{\abs{\cP_n}^{2k+2-l}} \le \const  n^{2k+2-l},
\]
for every $1\le l \le k$. So, finally, we have that
\[
\dtv{\Sk}{{Z}} \le \const \sum_{l=1}^k  n^{k+1-l} r_n^{d(k+1-l)} \le \const n\rnd,
\]
since $n\rnd\to 0$ and so is bounded.

\end{proof}

Note that the previous result did not use on the assumption that $\factor\to\alpha \in(0,\infty)$. However, to prove an analogous result for $\Nk$ rather than $\Sk$ we shall need it. We shall also need the  following two lemmas, the second of which follows easily from the first, which itself follows from a simple calculation.

\begin{lem}\label{lem:dtv_bound}
Let $X,Y$ be integer random variables defined over the same probability space,  such that $\Delta \definedas X-Y \ge 0$.
Then $
\dtv{X}{Y} \le \mean{\Delta}.
$
\end{lem}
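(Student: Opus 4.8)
The statement to prove is Lemma~\ref{lem:dtv_bound}: if $X,Y$ are integer-valued random variables on a common probability space with $\Delta := X - Y \ge 0$, then $\dtv{X}{Y} \le \mean{\Delta}$.

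\medskip

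\textbf{Proof plan.} The plan is to argue directly from the definition of total variation distance. Recall that for integer-valued random variables,
\[
\dtv{X}{Y} = \sup_{A \subseteq \Z} \abs{\prob{X \in A} - \prob{Y \in A}} = \frac{1}{2}\sum_{m \in \Z} \abs{\prob{X = m} - \prob{Y = m}}.
\]
First I would fix an arbitrary set $A \subseteq \Z$ and write
\[
\prob{X \in A} - \prob{Y \in A} = \mean{\ind\set{X \in A} - \ind\set{Y \in A}}.
\]
The key observation is a pointwise one: whenever $\Delta = 0$, i.e.\ $X = Y$, we have $\ind\set{X \in A} = \ind\set{Y \in A}$, so the integrand vanishes on the event $\set{\Delta = 0}$. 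Hence
\[
\abs{\prob{X \in A} - \prob{Y \in A}} \le \mean{\abs{\ind\set{X \in A} - \ind\set{Y \in A}}\ind\set{\Delta \ge 1}} \le \prob{\Delta \ge 1}.
\]
Taking the supremum over $A$ gives $\dtv{X}{Y} \le \prob{\Delta \ge 1}$. Finally, since $\Delta$ is a nonnegative integer-valued random variable, Markov's inequality (or simply $\ind\set{\Delta \ge 1} \le \Delta$ pointwise) yields $\prob{\Delta \ge 1} \le \mean{\Delta}$, which completes the argument.

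\medskip

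There is really no hard part here; the lemma is a soft coupling-type estimate. The only thing to be careful about is the choice of which equivalent formula for $\dtv{\cdot}{\cdot}$ to use, and making sure the bound $\prob{\Delta \ge 1} \le \mean{\Delta}$ is invoked correctly (it is exactly Markov's inequality for the nonnegative variable $\Delta$, and is an equality precisely when $\Delta \in \set{0,1}$ a.s.). An alternative, essentially identical route is to view $(X,Y)$ as a coupling of the two marginal laws and apply the standard coupling inequality $\dtv{X}{Y} \le \prob{X \ne Y}$, followed again by $\prob{X \ne Y} = \prob{\Delta \ge 1} \le \mean{\Delta}$; I would likely present whichever of these is shortest given the conventions already fixed in the paper.
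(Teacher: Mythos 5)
Your proposal is correct and follows essentially the same route as the paper: both arguments note that the contribution to $\prob{X\in A}-\prob{Y\in A}$ vanishes on the event $\set{X=Y}$, bound the remainder by $\prob{X\ne Y}=\prob{\Delta\ge 1}$, and then apply Markov's inequality (equivalently, $\ind\set{\Delta\ge 1}\le\Delta$) to get $\mean{\Delta}$. No issues.
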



\begin{lem}\label{lem:dtv_bound_pois}
Let $X\sim\pois{\lambda_x},\ Y\sim\pois{\lambda_y}$.
Then $\dtv{X}{Y} \le \abs{\lambda_x-\lambda_y}$.
\end{lem}


\begin{proof}[Proof of Theorem \ref{thm:dist_subcrit} - Part 2]
For a start, we need to prove that $\dtv{\Nk}{\Sk} \le \const n\rnd$. To this
end,
 define  $\Delta \definedas \Sk-\Nk$ and note that $\Delta$ counts the number of subsets $\cY\subset\cP_{n}$ for which $\hrn(\cY)=1$ but $\grn(\cY,\cP_n)=0$. This implies that there exists $X\in \cP_n \bs \cY$ for which $X\in B(\cY)$.
Thus, $\Delta$ is bounded from above by $k+2$ times the number of $(k+2)$-subsets  contained in  a ball of radius $r_n$. From Lemma \ref{lem:points_in_ball} and Lemma \ref{lem:dtv_bound} we have
\[
\dtv{\Nk}{\Sk} \le \mean{\Delta} \le \const {n}^{k+2} r_n^{d(k+1)} \le \const(\factor)(n\rnd)\le \const(n\rnd),
\]
where we used the fact that $\factor$ is bounded.

Next,  if $Z_N\sim\pois{\mean{\Nk}}$ and $Z_S\sim\pois{\mean{\Sk}}$ , then from Lemma \ref{lem:lim_pois_hrn} and the triangle inequality,
\begin{align*}
\dtv{\Nk}{Z_N}&\le \dtv{\Nk}{\Sk} + \dtv{\Sk}{Z_S} + \dtv{Z_S}{Z_N} \\
&\le \const(n\rnd) +\dtv{Z_S}{Z_N}.
\end{align*}
Finally, Lemma \ref{lem:dtv_bound_pois} implies  that
\[
\dtv{Z_S}{Z_N} \le \abs{\mean{\Sk}-\mean{\Nk}} = \abs{\mean{\Delta}} \le \const(n\rnd).
\]
This completes the proof that $\dtv{\Nk}{Z_N} \le \const(n\rnd) \to 0$. From Theorem \ref{thm:mean_subcrit}, since $\factor\to \alpha$, we have that $\mean{\Nk} \to \alpha\mu_k$. Using the fact that $Z_N \sim \pois{\mean{\Nk}}$, it is easy to see that $\dtv{\Nk}{\pois{\alpha\mu_k}} \to 0$ which implies convergence in distribution.

\end{proof}

\subsection*{\bf{Case 3:} $\factor \to \infty$}

\begin{proof}[Proof of Theorem \ref{thm:var_subcrit} - Part 3 ($\Nk$ only)]
We start with the second moment of $\Nk$,

\begin{align*}
\mean{\Nk^2} &=  \mean{\sum_{
\cY_1\subset\cP_n  } \sum_{
\cY_2\subset\cP_n  }
\grn(\cY_1,\cP_n)\grn(\cY_2,\cP_n)} \\
&=\sum_{j=0}^{k+1}\mean{\sum_{
\cY_1\subset\cP_n  } \sum_{
\cY_2\subset\cP_n  }
    \grn(\cY_1,\cP_n)\grn(\cY_2,\cP_n)\ind\set{\abs{\cY_1\cap \cY_2}=j}} \\
    &:= \sum_{j=0}^{k+1} \mean{{I}_j}.
\end{align*}
As in the proof of the previous cases, we have that
\begin{align*}
\limninf (\factor)^{-1}\E{{I}_{k+1}} &=\mu_k,\quad
\limninf (\factor)^{-1}\E{{I}_j} &= 0, \  1 \le j \le k.
\end{align*}
However, in this case, $I_0$ requires a different treatment.
Recall that our interest is in the variance -  $\var{\Nk}$. So we have,
\begin{align*}
\var{\Nk^2} &= \mean{\Nk^2} - \param{\mean{\Nk}}^2 \\
&= \mean{{I}_{k+1}} + \sum_{j=1}^{k} \mean{{I}_j} + \param{\mean{\widehat{I}_0} - \param{\mean{\Nk}}^2}.
\end{align*}
Thus, to complete the proof, we need to show that
\[
\limninf (\factor)^{-1}\param{\mean{\widehat{I}_0} - \param{\mean{\Nkt}}^2} = 0.
\]
Applying Corollary \ref{cor:prelim:palm2} we have
\begin{align*}
\mean{{I}_0}= \param{\frac{n^{k+1}}{(k+1)!}}^2 \mean{\grn(\cY_1',\cY_{12}'\cup \cP_n)\grn(\cY_2',\cY_{12}'\cup \cP_n)}_{\cY_1'\cap\cY_2' =\emptyset},
\end{align*}
where $\cY_1'$ and $\cY_2'$ are sets of $\iid$ points with density $f$, independent of $\cP_n$, and $\cY_{12}' = \cY_1' \cup \cY_2'$.
Similarly, applying Theorem \ref{thm:prelim:palm}, we have
\begin{align*}
\mean{\Nk} &= \frac{n^{k+1}}{(k+1)!} \mean{\grn(\cY_1',\cY_1'\cup \cP_n)}.
\end{align*}
Therefore, we can write
\[
\param{\mean{\Nk}}^2 = \param{\frac{n^{k+1}}{(k+1)!}}^2 \mean{\grn(\cY_1',\cY_1'\cup \cP_n)\grn(\cY_2',\cY_2'\cup \cP_n')},
\]
where $\cP_n'$ is an independent copy of $\cP_n$.
Set
\[
\Delta \definedas {\grn(\cY_1',\cY_{12}' \cup \cP_n)\grn(\cY_2',\cY_{12}' \cup \cP_n)-\grn(\cY_1',\cY_1'\cup \cP_n)\grn(\cY_2',\cY_2'\cup \cP_n')}.
\]
Showing  that $n^{k+1} r_n^{-dk} \mean{\Delta} \to 0$ will complete the proof.
Set
\begin{align*}
\Delta_1 = \Delta \cdot\ind\set{B(\cY_1')\cap B(\cY_2') \ne \emptyset}, \qquad
\Delta_2 = \Delta\cdot\ind\set{B(\cY_1')\cap B(\cY_2') = \emptyset}.
\end{align*}
If $\Delta_1 \ne 0$ then all the elements in $\cY_1'$ and $\cY_2'$ are bounded by a ball of radius $2r_n$. Therefore, using Lemma \ref{lem:points_in_ball}
\begin{align*}
{\mean{{\Delta_{1}}}} \le  \const r_n^{d(2k+1)}.
\end{align*}
Next, note that
\begin{align*}
\Delta_2 &= \hrn(\cY_1')\hrn(\cY_2')\ind\set{B(\cY_1')\cap B(\cY_2')=\emptyset}\\
&\qquad \times \Big(\ind\set{\cP_n \cap B(\cY_1') = \emptyset}\ind\set{\cP_n \cap B(\cY_2') = \emptyset} \\
&\qquad\qquad\qquad - \ind\set{\cP_n \cap B(\cY_1') = \emptyset}\ind\set{\cP_n' \cap B(\cY_2') = \emptyset}\Big).
\end{align*}
If $\Delta_2 \ne 0$, then $B(\cY_1')$ and $B(\cY_2')$ are disjoint. Therefore, given $\cY_1'$ and  $\cY_2'$, the set $\cP_n\cap B(\cY_2')$ is independent of the set $\cP_n\cap B(\cY_1')$ (by the spatial independence of the Poisson process), and has the same distribution as $\cP_n'\cap B(\cY_2')$. Thus, $\cmean{\Delta_2}{\cY_1',\cY_2'} = 0$, which implies that $\mean{\Delta_2} = 0$.

To conclude, $\mean{\Delta} \le \const r_n^{d(2k+1)}$. Therefore,
\[
\limninf n^{k+1}r_n^{-dk} \mean{\Delta} \le \limninf \const (n\rnd)^{k+1} = 0.
\]
This completes the proof for the limit variance.
\end{proof}

Next, we wish to prove the CLT  in Theorem \ref{thm:dist_subcrit}.

\begin{proof}[Proof of Theorem \ref{thm:dist_subcrit} - Part 3 ]
The proof is based on the normal approximation for sums of dependent variables given by Stein's method (Appendix \ref{sec:apndx_stein}). We  start by counting only critical points located in a compact $A\subset \R^d$ for which $\int_A f(x)dx > 0$.
For a fixed $n$, let $\set{Q_{i,n}}_{i \in \N}$ be a partition of $\R^d$ into cubes of side $r_n$, and let $I_A\subset \N$ be the (finite) set of indices $i$ for which $Q_{i,n}\cap A \ne \emptyset$. For $i\in I_A$, set
\begin{equation}\label{eq:def_grn_i}
    \grn^{(i)}(\cY,\cP_n) \definedas \grn(\cY, \cP_n)\ind_{A\cap Q_{i,n}}(C(\cY)),
\end{equation}
where $C(\cY)$ is the critical point in $\R^d$ generated by $\cY$ (cf.\ \eqref{eq:def_C}).
That is, $\grn^{(i)}=1$ implies that $\cY$ generates a critical point located in $A\cap Q_{i,n}$.
Then
\[
\Nkts{(i)} \definedas \sum_{\cY\subset \cP_n} \grn^{(i)}(\cY,\cP_n),
\]
is the number of critical points inside  $A\cap\qin$, and
\[
    \Nkts{A} \definedas \#\set{\textrm{critical points of $d_{\cP_n}$ inside $A$}} = \sum_{i\in I_A} \Nkts{(i)}.
\]
First, as in  the proof of Theorem \ref{thm:var_subcrit}, one can show that
\begin{equation}\label{eq:var_NkA}
\mu_k(A) \definedas \limninf (\factor)^{-1} \var{\Nkts{A}} \in (0,\infty)
\end{equation}
Now, for $i,j\in I_A$, define the relation $i\sim j$ if the distance between $Q_{i,n}$ and $Q_{j,n}$ is less than $2r_n$.
Then $(I_A, \sim)$ is the dependency graph (cf.\ \eqref{def:dep_graph}) for the set $\set{\Nkts{(i)}}_{i\in I_A}$. This follows from the fact that
 a critical point located inside $\qin$ is generated by points of $\cP_n$ that are within distance $r_n$ from $\qin$ (along with the spatial independence of $\cP_n$). The degree of this graph is bounded by $5^d$. Consider the normalized random variables
\[
    \xi_i \definedas \frac{\Nkts{(i)}-\mean{\Nkts{(i)}}}{\param{\var{\Nkts{A}}}^{1/2}}.
\]
According to Theorem \ref{thm:clt_stein}, in order to prove a CLT for $\Nkts{A}$, all we have  to do now is to find bounds for $\mean{\abs{\xi_i}^p},\ p=3,4$ .

Let $B_{r_n}(\qin)\subset \R^d$ be the set of points within distance $r_n$ of $\qin$, and let $Z_i \definedas \abs{\cP_n \cap B_{r_n}(\qin)}$ be the number points of the Poisson process $\cP_n$ lying inside $B_{r_n}(\qin)$.
Then $Z_i \sim \pois{\lambda_i}$
where $\lambda_i = \int_{B_{r_n}(\qin)} nf(x)dx \le n \fmax (3r_n)^d$. Thus, $Z_i$ is stochastically dominated by a Poisson random variable with parameter $\const nr_n^d$. Now,
\[
\Nkts{(i)} \le \binom{Z_i}{k+1} \le \const Z_i^{k+1}.
\]
Therefore, for any $p\ge 1$,
\[
\mean{\abs{\Nkts{(i)}}^p} \le \const\mean{Z_i^{p(k+1)}} \le \const(n\rnd)^{p(k+1)} \le  \const(n\rnd)^{k+1},
\]
since $n\rnd$ is bounded (note that each of the $\const$'s stands for a different value).
Thus, it is easy to show that also
\[
\mean{\abs{\Nkts{(i)}-\mean{\Nkts{(i)}}}^p} \le \const(n\rnd)^{k+1}.
\]
Since $A$ is compact, there exists a constant $v$ such that $\abs{I_A} \le v r_n^{-d}$.
Therefore, for $p=3,4$,
\[
\sum_{i\in I_A} \mean{\abs{\xi_i}^p} \le \frac{vr_n^{-d} \const(n\rnd)^{k+1} }{\param{{\var{\Nkts{A}}}}^{p/2}}
=v \const(\factor)^{1-p/2} \param{\frac{(\factor)}{{\var{\Nkts{A}}}}}^{p/2}  \to 0,
\]
where we used the fact that $\factor \to \infty$ and the limit in Theorem \ref{thm:var_subcrit}. From Theorem \ref{thm:clt_stein}, we conclude that
\begin{equation}\label{eq:clt_A}
\frac{\Nkts{A}-\mean{\Nkts{A}}}{\param{\var{\Nkts{A}}}^{1/2}} \xrightarrow{\cL} \cN(0,1).
\end{equation}

Now that we have a CLT for $\Nkts{A}$, we need to extend it to one for
$\Nkt$. The method we shall use is exactly the same as the one used in \cite{penrose_random_2003}, but, for completeness, we nevertheless include it.

Set $A_M = [-M,M]^d$, $A^M = \R^d \backslash A_M$, and suppose that $M$ is large enough such that $\int_{A_M} f(z)dz > 0$. Set
\[
\zeta_n(A) = \frac{\Nkts{A} - \mean{\Nkts{A}}}{\param{\factor}^{1/2}} \qquad
\zeta_n = \frac{\Nk - \mean{\Nk}}{\param{\factor}^{1/2}}
\]
To complete the proof we need to show that $\abs{\prob{\zeta_n \le t} - \Phi(t/\sqrt{\mu_k})} \to 0$, where $\Phi(\cdot)$ is the standard normal distribution function. Clearly, $\zeta_n = \zeta_n(A_M) + \zeta_n(A^M)$, and from \eqref{eq:clt_A} we have that
\begin{equation}\label{eq:clt_AM}
\zeta_n(A_M) \xrightarrow{\cL} \cN(0,\mu_k(A_M)).
\end{equation}
For every $t\in \R$ and $M,\delta>0$ we have
\beq
 \notag
\abs{\prob{\zeta_n \le t} - \Phi(t/\sqrt{\mu_k})} &\le& \abs{\prob{\zeta_n \le t} - \prob{\zeta_n(A_M)\le t-\delta}}  \\ \notag
&&+\abs{\prob{\zeta_n(A_M) \le t-\delta} - \Phi((t-\delta)/\sqrt{\mu_k(A_M)})}\\
&&+\abs{\Phi\param{(t-\delta)/\sqrt{\mu_k(A_M)}} - \Phi\param{t/\sqrt{\mu_k}}}.
 \label{eq:clt_sum_3}
\eeq
Now,
\beqq
\prob{\zeta_n \le t} &=& \prob{\zeta_n(A_M) \le t-\delta ,\zeta_n \le t}  \
+\ \prob{\abs{\zeta_n(A_M)-t} < \delta ,\zeta_n \le t} \\
&&\qquad  \qquad \qquad  \qquad \qquad  \qquad
  +\ \prob{\zeta_n(A_M) \ge t+\delta ,\zeta_n \le t}.
\eeqq
Note that the first term equals $$\prob{\zeta_n(A_M)\le t-\delta} - \prob{\zeta_n (A_M) \le t-\delta, \zeta_n > t}.$$ Thus,
\begin{align*}
&\abs{\prob{\zeta_n \le t} - \prob{\zeta_n(A_M)\le t-\delta}} \le \prob{\zeta_n (A_M) \le t-\delta, \zeta_n > t} \\
&\quad +\prob{\abs{\zeta_n(A_M)-t} < \delta ,\zeta_n \le t} +\prob{\zeta_n(A_M) \ge t+\delta ,\zeta_n \le t} \\
&\quad\le \prob{\abs{\zeta_n(A^M)} > \delta} + \prob{\abs{\zeta_n(A_M)-t} < \delta}.
\end{align*}
From Chebyshev's inequality we have that  $\prob{\abs{\zeta_n(A^M)} > \delta} \le \delta^{-2}\var{\zeta_n(A^M)}$. From \eqref{eq:clt_AM}, we have that
\begin{align*}
\limninf \prob{\abs{\zeta_n(A_M)-t} < \delta} &= \Phi((t+\delta)/\sqrt{\mu_k(A_M)}) - \Phi((t-\delta)/\sqrt{\mu_k(A_M)})\\
&\le \frac{2\delta}{\sqrt{2\pi \mu_k(A_M)}}.
\end{align*}

Therefore,
\beqq
\limsup_{n\to\infty}\abs{\prob{\zeta_n \le t} - \prob{\zeta_n(A_M)\le t-\delta}} \le \frac{\mu_k(A^M)}{\delta^2} +  \frac{2\delta}{\sqrt{2\pi \mu_k(A_M)}}.
\eeqq
For $\eps >0$, choose $\delta = \epsilon \sqrt{\pi \mu_k} / 4$. Since $\lim_{M\to\infty} \mu_k(A_M) = \mu_k$, and \\ $\lim_{M\to\infty} \mu_k(A^M) =0$, there exists $M$ large enough such that $\mu_k(A_M) \ge \mu_k/2$, $\mu_k(A^M) \le \eps \delta^2/2$, and also $\abs{\Phi\param{(t-\delta)/\sqrt{\mu_k(A_M)}} - \Phi\param{t/\sqrt{\mu_k}}}<2\eps$.
 For this choice of $\delta,M$, using last displayed inequality, we have
\[
\limsup_{n\to\infty}\abs{\prob{\zeta_n \le t} - \prob{\zeta_n(A_M)\le t-\delta}} \le \eps.
\]
Finally, returning to \eqref{eq:clt_sum_3}, there exists $N>0$ such that for every $n>N$
\[
\abs{\prob{\zeta_n \le t} - \Phi(t/\sqrt{\mu_k})} < 4\eps.
\]
This completes the proof.
\end{proof}


\section{The Critical and Supercritical Ranges ($n\rnd\to \lambda \in (0,\infty]$)}\label{sec:results_crit_pfs}
We start with the expectation computations.
The following standard lemma is going to play a key role in the supercritical regime.
\begin{lem}\label{lem:convex_ball}
	Let $D\subset \R^d$ be a compact convex set with  positive Lebesgue measure, and let $B_r(x)\subset \R^d$ be the ball of radius $r$ around $x$.
	Then there exists a constant $\const$ such that for every $r<\diam(D)$ and $x \in D$,
\[
\vol(B_r(x) \cap D) \geq \const r^d.
\]
\end{lem}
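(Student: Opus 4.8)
The plan is to reduce the claim to a statement about the worst-case position of $x$ in $D$, namely a point on the boundary, and then to use convexity to exhibit a fixed-size piece of $D$ near $x$. The key geometric fact is that for a convex body $D$ with positive volume, at any point $x \in D$ there is a cone with apex $x$, a fixed opening angle, and a fixed height that is entirely contained in $D$ — and the volume of the intersection of such a cone with the ball $B_r(x)$ is of order $r^d$ uniformly.

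First I would fix a reference point $x_0 \in \intr(D)$ together with a radius $\rho_0 > 0$ such that $B_{\rho_0}(x_0) \subset D$; this is possible since $D$ has positive Lebesgue measure and hence nonempty interior. Given any $x \in D$, convexity of $D$ implies that the convex hull of $\{x\} \cup B_{\rho_0}(x_0)$ is contained in $D$. This convex hull contains a cone $\mathcal{C}_x$ with apex $x$, axis pointing toward $x_0$, half-angle $\theta_0 \definedas \arcsin(\rho_0 / \diam(D)) > 0$ (a quantity independent of $x$, using $\|x - x_0\| \le \diam(D)$), and height at least $\rho_0$. Thus every $x \in D$ has, emanating from it and lying inside $D$, a cone of fixed angular aperture and fixed minimal height $\rho_0$.

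Next I would intersect this cone with $B_r(x)$. For $r < \diam(D)$ — and in fact it suffices to note that when $r \le \rho_0$ the truncated cone $\mathcal{C}_x \cap B_r(x)$ is a full spherical cone of radius $r$ and half-angle $\theta_0$, whose volume equals $c(d,\theta_0)\, r^d$ for an explicit constant $c(d,\theta_0) > 0$ depending only on $d$ and $\theta_0$ — we get the bound $\vol(B_r(x) \cap D) \ge \vol(\mathcal{C}_x \cap B_r(x)) \ge c(d,\theta_0) r^d$. For the remaining range $\rho_0 < r < \diam(D)$ one simply observes $\vol(B_r(x) \cap D) \ge \vol(B_{\rho_0}(x) \cap D) \ge c(d,\theta_0)\rho_0^d = c(d,\theta_0)\rho_0^d (\diam(D))^{-d} \cdot (\diam(D))^d \ge \const r^d$, absorbing the ratio $(\rho_0/\diam(D))^d$ into the constant. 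Taking $\const$ to be the minimum of the two constants produced completes the argument.

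The main obstacle — really the only subtlety — is making the cone construction uniform in $x$: one must check that the half-angle $\theta_0$ can be chosen independently of $x \in D$, which is exactly where the bound $\|x - x_0\| \le \diam(D)$ enters, and that the cone genuinely lies in $D$, which is where convexity of $D$ is used essentially. Everything after that is an elementary volume computation for a spherical sector, and I would not carry it out in detail beyond noting that the constant depends only on $d$ and on the ratio $\rho_0/\diam(D)$, hence ultimately only on $D$.
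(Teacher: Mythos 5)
Your proof is correct. The paper does not actually prove this lemma --- it is stated as a ``standard'' fact and used as a black box --- so there is nothing to compare against; the interior-ball-plus-cone argument you give is the canonical one. The only step you assert rather than check is that the spherical cone with apex $x$, half-angle $\theta_0=\arcsin(\rho_0/\diam(D))$ \emph{and height $\rho_0$} lies in $\conv(\{x\}\cup B_{\rho_0}(x_0))$: the naive tangent-cone picture only yields the cone out to the tangency points, which sit at distance $\sqrt{\|x-x_0\|^2-\rho_0^2}$ from $x$, and that can be smaller than $\rho_0$. The claim is nevertheless true, because a ray from $x$ making angle $\alpha\le\theta_0$ with the axis stays inside the hull all the way until it exits $B_{\rho_0}(x_0)$, i.e.\ up to the parameter $t_+=L\cos\alpha+\sqrt{\rho_0^2-L^2\sin^2\alpha}$ with $L=\|x-x_0\|$; writing $L=\diam(D)\sin\phi$ one finds $t_+\ge\diam(D)\sin(\phi+\theta_0)\ge\diam(D)\sin\theta_0=\rho_0$, so a line of trigonometry closes the gap. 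If you prefer to avoid this entirely, a scaling argument does the same job in one step: for $t=r/(2\diam(D))\le 1/2$, the set $x+t\bigl(B_{\rho_0}(x_0)-x\bigr)=B_{t\rho_0}\bigl((1-t)x+tx_0\bigr)$ lies in $D$ by convexity and in $B_r(x)$ because $t(\|x_0-x\|+\rho_0)\le 2t\diam(D)=r$, which gives $\vol(B_r(x)\cap D)\ge\omega_d\rho_0^d\,(2\diam(D))^{-d}\,r^d$ directly for all $r<\diam(D)$.
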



The following Lemma is analogous to  Lemma \ref{lem:lim_indicators}.
\begin{lem}\label{lem:lim_indicators_crit}
Let $\cY$, be a set of $k+1$ $\iid$ random variables with density $f$, independent of the Poisson process $\cP_n$. Then,
\[
\limninf n^k\mean{\grn(\cY,\cY\cup\cP_n)}   = (k+1)!\gamma_k(\lambda).
\]
\end{lem}


\begin{proof}
Setting $s_n = n^{-1/d}$
and  mimicking the proof of Lemma \ref{lem:lim_indicators} we obtain
\begin{align}
&\mean{\grn(\cY,\cY\cup\cP_n)} = \int_{(\R^d)^{k+1}} f(\bx) \hrn(\bx) e^{-np(\bx)}d\bx \non \\
&\quad= s_n^{dk} \int_{\R^d} \int_{(\R^d)^k} f(x)f(x+s_n\by) \hrn(x,x+s_n\by)e^{-np(x,x+s_n\by)} d\by dx\non \\
&\quad= n^{-k} \int_{\R^d} f(x)\int_{(\R^d)^k}f(x+s_n\by) h_{\tau_n}(0,\by) e^{-np(x,x+s_n\by)} d\by dx,\label{eq:mean_grn_integral_crit}
\end{align}
where $\tau_n = r_n / s_n = n^{1/d}r_n$. We wish to apply the dominated convergence theorem for the last integral. Thus, we need to bound the integrand with an integrable expression.

In the critical range this is done much as in  the subcritical range. Since $n\rnd\to \lambda < \infty$, we have that $\tau_n$ is bounded by some value $M$.
Now, for $h_{\tau_n}(0,\by)$ to be nonzero, all the elements $y_1,\ldots,y_k\in \R^d$ must lie inside $B_{2\tau_n}(0) \subset B_{2M}(0)$. Therefore,
\[
\abs{f(x+s_n\by) h_{\tau_n}(0,\by) e^{-np(x,x+s_n\by)}} \le \fmax^k \ind_{B_{2M}(0)}(y_1)\cdots \ind_{B_{2M}(0)}(y_k),
\]
and this expression is integrable.

The last argument cannot be applied in the supercritical range, since then $\tau_n$ is no longer bounded. However, applying our additional, lower boundedness
 assumptions on the  $f$, we can proceed as follows. Since we now have $\fmin > 0$ we also have that
\begin{equation} \label{eq:prob_ball}
p(\bx) = \int_{B(\bx)}f(z)dz \, \ge\,  \fmin \vol(B(\bx)\cap \supp(f)).
\end{equation}

 If $h_{r_n}(\bx) \ne 0$, then necessarily $C(\bx) \in \oconv(\bx)$ and $R(\bx) \le r_n$ (cf.\ \eqref{eq:def_h_eps}). In addition, if $f(\bx) \ne 0$, then $\bx \subset \supp(f)$. Since we assume that $\supp(f)$ is convex, we have that $C(\bx)\in \supp(f)$ as well. Thus, $B(\bx)$ is a ball centered at $C(\bx)\in \supp(f)$, with radius $R(\bx)$ small enough, and  Lemma \ref{lem:convex_ball} yields
\[
\vol(B(\bx)\cap \supp(f)) \ge \const R^d(\bx).
\]
Using the inequality in \eqref{eq:prob_ball}, and the definition of $R(\bx)$ in \eqref{eq:def_R}, we have that
\[
	p(x,x+s_n\by) \ge \fmin c^\star R(x,x+s_n\by)  = \fmin c^\star s_n^d R^d(0, \by) = \fmin c^\star n^{-1} R^d(0, \by).
\]
This can be used to bound the integrand in \eqref{eq:mean_grn_integral_crit}, so that
\begin{equation}\label{eq:dct_supercrit}\begin{split}
\abs{f(x+s_n\by) h_{\tau_n}(0,\by) e^{-np(x,x+s_n\by)}} &\le \fmax^k e^{-n \fmin \const R^d(x,x+s_n\by)} \\&= \fmax^k e^{-\fmin\const R^d(0,\by)}.
\end{split}
\end{equation}
Next, note that for $i=1,\ldots,k$, $R(0,\by) \ge \norm{y_i}/2$. Thus,
\[
R^d(0,\by) \ge \frac{1}{{2^d}k}\sum_{j=1}^k  {\norm{y_j}^d},
\]
which implies that the expression in \eqref{eq:dct_supercrit} is indeed integrable, and so the DCT can be safely applied in both regimes.

Next, we compute the limit of the integral in \eqref{eq:mean_grn_integral_crit}. Note first  that
\begin{align*}
np(x,x+s_n\by) &= n \int_{B(x,x+s_n\by)} f(z)dz \\
&=  n {\vol(B(x,x+s_n \by))} \frac{\int_{B(x,x+s_n\by)}f(z)dz }{\vol(B(x,x+s_n \by))} \\
&=  n \omega_d(s_n R(0,\by))^d \frac{\int_{B(x,x+s_n\by)}f(z)dz }{\vol(B(x,x+s_n \by))}.\\
&=  \omega_d R^d(0,\by) \frac{\int_{B(x,x+s_n\by)}f(z)dz }{\vol(B(x,x+s_n \by))},
\end{align*}
and using the Lebesgue differentiation theorem yields
\[
\limninf np(x,x+s_n\by) = \omega_d R^d(0,\by) f(x).
\]
Taking the limit of all the other terms in \eqref{eq:mean_grn_integral_crit} we have
\[
\limninf n^k \mean{\grn(\cY,\cY\cup\cP_n)} = \int_{(\R^d)^{k+1}} f^{k+1}(x) h_{\tau_\infty}(0,\by) e^{-\omega_d R^d(0,\by) f(x)} d\by dx,
\]
where $\tau_\infty = \limninf \tau_n$. In the supercritical regime, $\tau_\infty = \infty$, and consequently $h_{\tau_\infty}(\cdot) = h_{\infty}(\cdot) =  h(\cdot)$. Thus,
\begin{align*}
\limninf n^k \mean{\grn(\cY,\cY\cup\cP_n)} &= \int_{(\R^d)^{k+1}} f^{k+1}(x) h(0,\by) e^{-\omega_d R^d(0,\by) f(x)} d\by dx\\  &= (k+1)!\gamma_k(\infty).
\end{align*}
In the critical range, $\tau_n \to \lambda^{1/d}$. Therefore,
\begin{align*}
&\limninf n^k \mean{\grn(\cY,\cY\cup\cP_n)} = \int_{(\R^d)^{k+1}} f^{k+1}(x) h_{\lambda^{1/d}}(0,\by) e^{-\omega_d R^d(0,\by) f(x)} d\by dx\\
&\quad= \lambda^k\int_{(\R^d)^{k+1}} f^{k+1}(x) h_{\lambda^{1/d}}(0,\lambda^{1/d}\bz) e^{-\lambda\omega_d R^d(0,\bz) f(x)} d\bz dx= (k+1)!\gamma_k(\lambda).
\end{align*}
This completes the proof.
\end{proof}


\subsection{Asymptotic Means}

Using Lemma \ref{lem:lim_indicators_crit} we can prove Theorem \ref{thm:mean_crit}.

\begin{proof}[Proof of Theorem \ref{thm:mean_crit}]
Using Theorem \ref{thm:prelim:palm},
\[
\mean{\Nk} = \frac{n^{k+1}}{(k+1)!}\mean{\grn(\cY',\cY' \cup \cP_n)},
\]
and, using Lemma \ref{lem:lim_indicators_crit},
\[
\limninf n^{-1} \mean{\Nk} = {\gamma_k(\lambda)},
\]
which completes the proof.
\end{proof}

\subsection{Asymptotic Variance}

\begin{proof}[Proof of Theorem \ref{thm:var_crit}]
As in the proof of Theorem \ref{thm:var_subcrit},
\beqq
\var{\Nk^2} = \mean{\Nk} + \sum_{j=1}^{k} \mean{I_j} + \param{\mean{I_0} - \param{\mean{\Nk}}^2},
\eeqq
where
\[
I_j = \sum_{
\cY_1\subset\cP_n  } \sum_{
\cY_2\subset\cP_n  }
    \grn(\cY_1,\cP_n)\grn(\cY_2,\cP_n)\ind\set{\abs{\cY_1\cap \cY_2}=j}\Big.
\]
From Corollary \ref{cor:prelim:palm2},
\[
\mean{I_j} =\frac{ n^{2k+2-j}}{j!((k+1-j)!)^2} \mean{\grn(\cY_1',\cY_{12}'\cup \cP_n)\grn(\cY_2',\cY_{12}'\cup \cP_n)}_{\abs{\cY_1'\cap\cY_2'} = j}.
\]
For $0 < j < k+1$, as in the proof of Lemma \ref{lem:lim_indicators_crit}, one can show that
\begin{align*}
&\limninf n^{d(2k+1-j)}\mean{\grn(\cY_1',\cY_{12}'\cup \cP_n)\grn(\cY_2',\cY_{12}'\cup \cP_n)}_{\abs{\cY_1'\cap\cY_2'} = j} \\
&\quad = \int\limits_{\R^{d(2k+2-j)}} f^{2k+2-j}(x) h_{\tau_\infty}(0,\by_1\cup\bz) h_{\tau_\infty}(0,\by_2\cup\bz) \\
&\qquad\qquad \qquad \qquad \times e^{-\vol(B(0,\by_1\cup\bz)\cup B(0,\by_2\cup\bz))f(x)}dx d\by_1 d\by_2 d\bz,
\end{align*}
where $x\in \R^d,\ \by_i \in \R^{d(k+1-j)}, \bz \in \R^{d(j-1)}$, and $\tau_\infty = \limninf n^{1/d} r_n$.
Therefore,
\[
\limninf n^{-1}\mean{I_j} = \gamma_k^{(j)}(\lambda),
\]
where
\begin{align*}
\gamma_k^{(j)}(\lambda) &\definedas \frac{\lambda^{2k+1-j}}{j!((k+1-j)!)^2}
\int\limits_{\R^{d(2k+2-j)}} f^{2k+2-j}(x) h_1(0,\by_1\cup\bz) h_1(0,\by_2\cup\bz) \\
&\qquad \qquad\qquad \qquad\qquad \times e^{-\lambda  \vol(B(0,\by_1\cup\bz)\cup B(0,\by_2\cup\bz)) f(x)}dx d\by_1 d\by_2 d\bz.
\end{align*}
for $\lambda\in(0,\infty)$, and
\begin{align*}
\gamma_k^{(j)}(\infty) &\definedas \frac{1}{j!((k+1-j)!)^2}
\int\limits_{\R^{d(2k+2-j)}} f^{2k+2-j}(x) h(0,\by_1\cup\bz) h(0,\by_2\cup\bz)\\
 &\qquad \qquad\qquad \qquad\qquad \times e^{-\vol(B(0,\by_1\cup\bz)\cup B(0,\by_2\cup\bz)) f(x)}dx d\by_1 d\by_2 d\bz.
\end{align*}
It is easy to show that $0 < \gamma_k^j(\lambda) <\infty$ for $\lambda\in(0,\infty]$.
For $j=0$, we define
\[
\Delta \definedas {\grn(\cY_1',\cY_{12}' \cup \cP_n)\grn(\cY_2',\cY_{12}' \cup \cP_n)-\grn(\cY_1',\cY_1'\cup \cP_n)\grn(\cY_2',\cY_2'\cup \cP_n')}
\]
so that
\[
\mean{I_0} - \param{\mean{\Nk}}^2 =  \frac{n^{2k+2}}{((k+1)!)^2} \mean{\Delta}.
\]
Now set
\beqq
\Delta_1 = \Delta \cdot \ind\set{B(\cY_1')\cap B(\cY_2') \ne \emptyset},
\qquad
\Delta_2 = \Delta\cdot\ind\set{B(\cY_1')\cap B(\cY_2') = \emptyset}.
\eeqq
Then, as in the proof of Theorem  \ref{thm:var_subcrit}, we can show  that
$\mean{\Delta_2} = 0$,
and
\begin{align*}
&\limninf n^{2k+1}\mean{\Delta_1}\\
&= \int_{\R^{d(2k+2)}} f^{2k+2}(x) h_{\tau_\infty}(0,\by_1) h_{\tau_\infty}(0,\by_2)\ind\set{B(0,\by_1)\cap B(z,z+\by_2) \ne \emptyset} \\
&\ \times\param{ e^{-\vol(B(0,\by_1)\cup B(z,z+\by_2)) f(x)}- e^{-\omega_d(R^d(0,\by_1)+R^d(0,\by_2))f(x)}}dx dz d\by_1 d\by_2 ,
\end{align*}
where $x,z\in\R^d$, and $\by_i \in (\R^d)^k$.
Thus,
\[
\limninf n^{-1}\param{ \mean{I_0} - \param{\mean{\Nk}}^2} = \gamma_k^{(0)}(\lambda),
\]
where
\begin{equation*}
\begin{split}
\gamma_k^{(0)}(\lambda) &\definedas \frac{\lambda^{2k+1}}{((k+1)!)^2}\\
&\times \int_{\R^{d(2k+2)}} f^{2k+2}(x) h_1(0,\by_1) h_1(0,\by_2)\ind\set{B(0,\by_1)\cap B(z,z+\by_2) \ne \emptyset} \\
&\times\param{ e^{-\lambda \vol(B(0,\by_1)\cup B(z,z+\by_2)) f(x)}- e^{-\lambda\omega_d(R^d(0,\by_1)+R^d(0,\by_2))f(x)}}dx dz d\by_1 d\by_2 ,
\end{split}
\end{equation*}
for $\lambda < \infty$, and
\begin{equation*}
\begin{split}
\gamma_k^{(0)}(\infty) &\definedas \frac{1}{((k+1)!)^2}\\
&\times \int_{\R^{d(2k+2)}} f^{2k+2}(x) h(0,\by_1) h(0,\by_2)\ind\set{B(0,\by_1)\cap B(z,z+\by_2) \ne \emptyset} \\
&\times\param{ e^{- \vol(B(0,\by_1)\cup B(z,z+\by_2)) f(x)}- e^{-\omega_d(R^d(0,\by_1)+R^d(0,\by_2))f(x)}}dx dz d\by_1 d\by_2.
\end{split}
\end{equation*}
To conclude, we have proven that
\beq
\label{sigma2khat:defn}
\limninf n^{-1} \var{\Nk} = \gamma_k(\lambda) + \sum_{j=0}^k \gamma_k^{(j)}(\lambda) \definedas {\sigma}^2_k(\lambda) \in (0,\infty),
\eeq
as required.
\end{proof}

\subsection{CLT}

Next, we prove the CLT result in Theorem \ref{thm:clt_crit},
again  using Stein's method, as in  the proof of Theorem \ref{thm:dist_subcrit}.

\begin{proof}[Proof of Theorem \ref{thm:clt_crit}]
We start again by counting only critical points located in a compact set $A\subset \R^d$, with $\int_A f(x)dx > 0$.

We define $Q_{i,n}, \Nkts{(i)},\Nkts{A}, \grn^{(i)},(I_A, \sim)$ and $\xi_i$ the same way as in the proof of Theorem \ref{thm:dist_subcrit}. Then, as in the proof of  Theorem \ref{thm:var_crit}, one can show that
\begin{equation}\label{eq:var_NkA}
\limninf n^{-1} \var{\Nkts{A}} \in (0,\infty).
\end{equation}
According to Theorem \ref{thm:clt_stein}, in order to prove a CLT for $\Nkts{A}$,  we need  to find bounds for $\mean{\abs{\xi_i}^p},\ p=3,4$ . We start with $p=3$.
\[
\mean{\param{\Nkts{(i)} - \mean{\Nkts{(i)}}}^3} = \sum_{j=0}^3\binom{3}{j}(-1)^j \param{\mean{\Nkts{(i)}}}^{3-j}\mean{\param{\Nkts{(i)}}^j}.
\]
The computation of the bound here is similar in spirit to the ones we used in the proof of Theorem \ref{thm:var_subcrit}, but technically more complicated,
 and we shall not give details. Rather, we   shall  suffice with
a brief description of  the main ideas: Every element in the sum can be expressed as the expectation of a triple sum of the form
\begin{equation}\label{eq:quadruple_sum}
\E\Bigg\{\sum_{\cY_1\subset \cP_n^{(1)}}\sum_{\cY_2\subset \cP_n^{(2)}}\sum_{\cY_3 \subset \cP_n^{(3)}} \grn^{(i)}(\cY_1, \cP_n^{(1)})\grn^{(i)}(\cY_2, \cP_n^{(2)})\grn^{(i)}(\cY_3, \cP_n^{(3)})\Bigg\},
\end{equation}
where each of the Poisson processes can either be equal to one of the others or an independent copy, depending on $j$. As for $\mean{\Delta_2}$ in the proof of Theorem \ref{thm:var_crit}, we can use Palm theory, collect all the terms in which at least one of the balls $B(\cY_i)$ is disjoint from the others, and show that they cancel each other. For each of the remaining terms, we can show that if $\abs{\cY_1\cup\cY_2\cup\cY_3} = 3k+3-j$, with $0 \le j \le  3k+3$, then
the relevant part of the sum in \eqref{eq:quadruple_sum} is bounded by  $\const n^{3k+3-j} s_n^{d(3k+2-j)} \rnd = \const n\rnd$. This bound is achieved using integral evaluations similar to the ones used in the proof of Theorem \ref{thm:var_crit}, along with the fact that all the points are located within distance of $r_n$ from the cube $Q_{i,n}$. Thus, we have
\[
\mean{\param{\Nkts{(i)} - \mean{\Nkts{(i)}}}^3} \le \const n \rnd.
\]
Recall, that $|I_A| \le \const r_n^{-d}$. Therefore,
\[
\sum_{i\in I_A}\mean{\abs{\xi_i}^3}  \le   \frac{\const  r_n^{-d} n \rnd}{\param{\var{\Nkts{A}}}^{3/2}} =
  \frac{\const n}{n^{3/2} \param{n^{-1}\var{\Nkts{A}}}^{3/2}}  \to 0.
\]
The proof for $p=4$ is similar. Thus, from Theorem \ref{thm:clt_stein} we have that
\[
\frac{\Nkts{A} - \mean{\Nkts{A}}}{\param{\var{\Nkts{A}}}^{1/2}} \xrightarrow{\cL} \cN(0,1).
\]
To conclude the proof, we need to show that the CLT for $\Nkts{A}$ implies a CLT for $\Nk$. This is done  exactly as for Part 3 of Theorem \ref{thm:dist_subcrit}.

\end{proof}

The only remaining results  in  Section  \ref{sec:results} that still require
proofs relate to the global number of critical points.

\begin{proof}[Proof of Theorem \ref{thm:mean_global}]
This theorem is proved exactly the same way as Theorems \ref{thm:mean_crit}, \ref{thm:var_crit}, and \ref{thm:clt_crit} are proved in the super-critical phase. The only difference is that, throughout, $h(\bx)$ replaces $h_{\tau_n}(\bx)$. This, however does not affect any of the results, since in the limit $h_{\tau_n}(\bx) \to h(\bx)$.
\end{proof}

\begin{proof}[Proof of Proposition \ref{prp:global_vs_local}]
The expected difference between the global and local number of critical points is given by
\begin{align}
&\mean{\Nkg - \Nk}\notag\\
& = \frac{n^{k+1}}{(k+1)!}n^{-k} \int\limits_{(\R^d)^{k+1}} f(x)f(x+s_n\by)(h(0,\by) - h_{\tau_n}(0,\by)) e^{-np(x,x+s_n\by)} d\by dx \notag\\
& =\frac{1}{(k+1)!} \int\limits_{(\R^d)^{k+1}} f(x)f(x+s_n\by)(h(0,\by) - h_{\tau_n}(0,\by)) n e^{-np(x,x+s_n\by)} d\by dx. \label{eq:diff_Nkg}
\end{align}
As in the proof of Theorem \ref{thm:mean_crit} (cf.\ \eqref{eq:dct_supercrit}), we can show that the integrand is bounded by
\begin{eqnarray}\label{eq:dct_supercrit_global}
f(x) \fmax^k (h(0,\by) - h_{\tau_n}(0,\by)) n e^{-\fmin\const R^d(0,\by)}.
\end{eqnarray}
Now note that if the integrand is nonzero then $h\ne h_{\tau_n}$, and so $R(0,\by) > \tau_n$. Therefore, $R^d(0,\by) > 1/2(R^d(0,\by) + n\rnd)$, and \eqref{eq:dct_supercrit_global} can be replaced by
\begin{equation}\label{eq:bound_Nkg}
f(x)\fmax^k (h(0,\by) - h_{\tau_n}(0,\by)) e^{-\fmin\const R^d(0,\by)/2}n e^{-\fmin\const n\rnd/2}.
\end{equation}
Assuming that $n\rnd \ge D^\star \log n$, with
$D^\star = (\fmin \const / 2)^{-1}$
then
$n e^{-\fmin\const n\rnd/2} \le 1$ and we obtain an integrable bound for the integrand. Thus, we can apply the DCT to \eqref{eq:diff_Nkg}. Finally, note that the bound we found in \eqref{eq:bound_Nkg} converges to zero (since $h_{\tau_n} \to h$), so we are done.
\end{proof}

\section{Euler Characteristic Results}
\label{sec:ec_proof}
Finally, we prove Corollary \ref{cor:cech_ec}.

\begin{proof}[Proof of Corollary \ref{cor:cech_ec}]
First note that $\mean{N_{0,n}} = n$. Thus,
\[
\mean{\chi_n} = n + \sum_{k=1}^d (-1)^k \mean{\Nk}
\]
The first two cases of the theorem are now obvious consequences of Theorems \ref{thm:mean_subcrit} and \ref{thm:mean_crit}. For the third case, using Theorem \ref{thm:mean_global}, we have
\[
\limninf n^{-1} \chi_n = \limninf n^{-1}\sum_{k=0}^d  (-1)^k \Nkg.
\]
However, since $\Nkg$ counts all the critical points in $\R^d$,  Morse theory
implies
\[
\sum_{k=0}^d (-1)^k \Nkg = \chi(\R^d) = 1,
\]
and we can conclude that
$\limninf n^{-1}\chi_n = 0$.

If, in addition, $\rnd$ satisfies the conditions of Proposition \ref{prp:global_vs_local} (i.e. $n\rnd \ge D^\star \log n$), then
\beqq
0 = \limninf \sum_{k=0}^d (-1)^k\mean{\Nkg - \Nk} = 1 - \limninf\chi_n,
\eeqq
which implies that $\chi_n \to 1$.
\end{proof}

\appendix

\section{Convergence of Random Variables}\label{sec:lim_dist}
Probability theory uses a number of different notions of convergence. Below we define  the ones used in this paper.

Let $X_1,X_2,\ldots$ be a sequence of real valued random variables, with the cumulative distribution function of $X_n$ given by
\[
	F_n(x) = \prob{X_n \le x},
\]
and let $X$ be random variable with  cumulative distribution function $F$.
\begin{defn}
$X_n$ converges \emph{in distribution}, or \emph{in law} to $X$ (denoted by $X_n \xrightarrow{\cL} X$) if
\[
	\limninf F_n(x) = F(x)
\]
for every $x\in \R$ at which $F(x)$ is continuous. 
\end{defn}
This type of convergence is also sometimes referred to as `weak convergence'.

\begin{defn}
$X_n$ converges in $L^p$ to $X$ (denoted by $X_n \xrightarrow{L^p} X$) if
\[
	\mean{\abs{X_n-X}^p} \to 0.
\]
\end{defn}

Finally, let $A$ be a Borel subset of $\R$, and define the probability measures $\mu_n$ and $\mu$ by
\[
	\mu_n(A) = \prob{X_n \in A}, \quad \mu(A) = \prob{X\in A}.
\]
Then the total variation distance between $X_n$ and $X$, or between $\mu_n$ and $\mu$, is defined as
\[
d_{\mathrm{TV}}(X_n,X) \equiv d_{\mathrm{TV}}(\mu_n,\mu) := \sup_{A} \abs{\prob{X_n\in A} - \prob{X\in A}}.
\]
where the supremum is taken over all Borel subsets of $\R$. This distance provides us with the following notion of convergence.
\begin{defn}
$X_n$ converges in the total variation distance ($ X_n \xrightarrow{\mathrm{TV}} X$) if
\[
	\limninf d_{\mathrm{TV}}(X_n,X) = 0,
\]
\end{defn}

Note that both $L^p$ and total variation convergence are stronger than convergence in distribution.  Further, while convergence in total variation and in distribution actually refer only convergence of (deterministic) measures and/or cdf's, convergence in $L^p$ demands that all the random variables involved are defined on a common probability space, and that the convergence is that of the  random variables themselves.

\section{Palm Theory for Poisson Processes}

This appendix contains a collection of definitions and theorems which are used in the proofs of this paper. Most of the results are cited from \cite{penrose_random_2003}, although they may not necessarily have originated there. However, for notational reasons we refer the reader to \cite{penrose_random_2003}, while other resources include \cite{stoyan_stochastic_1987, arratia_two_1989}.
The following theorem is very useful when computing expectations related to Poisson processes.

\begin{thm}[Palm theory for Poisson processes, \cite{penrose_random_2003}
Theorem 1.6]
\label{thm:prelim:palm}
Let $f$ be a probability density on $\R^d$, and let $\cP_n$ be a Poisson process on $\R^d$ with intensity $\lambda_n = n f$.
Let $h(\cY,\cX)$ be a measurable function defined for all finite subsets $\cY \subset \cX \subset \R^d$  with $\abs{\cY} = k$. Then
\[
    \E\Big\{\sum_{ \cY \subset \cP_n}
    h(\cY,\cP_n)\Big\} = \frac{n^k}{k!} \mean{h(\cY',\cY' \cup \cP_n)}
\]
where $\cY'$ is a set of $k$ $iid$ points in $\Rd$ with density $f$, independent of $\cP_n$.
\end{thm}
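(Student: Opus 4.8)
The plan is to prove the identity by conditioning on the total number of points of $\cP_n$ and exploiting the defining property of a Poisson process with a density intensity: since $\int_{\R^d} nf(x)\,dx = n$, the count $\abs{\cP_n}$ is Poisson distributed with parameter $n$, and, conditionally on $\abs{\cP_n}=m$, the points of $\cP_n$ have the same law as $\set{X_1,\ldots,X_m}$ with $X_1,X_2,\ldots$ i.i.d.\ from the density $f$. I would first reduce to the case $h\ge 0$ (the general signed case then follows by splitting $h$ into positive and negative parts, assuming the relevant expectation is finite), so that Tonelli's theorem freely justifies interchanging expectation with the sum over $m$.

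With that reduction in place, the computation is short. Conditioning on $\abs{\cP_n}=m$ and using the exchangeability of $X_1,\ldots,X_m$, each of the $\binom{m}{k}$ unordered $k$-subsets $\cY$ of $\set{X_1,\ldots,X_m}$ contributes the same expectation, so
\[
\mean{\sum_{\cY\subset\cP_n} h(\cY,\cP_n)} = \sum_{m=k}^\infty e^{-n}\frac{n^m}{m!}\,\binom{m}{k}\,\mean{h\param{\set{X_1,\ldots,X_k},\set{X_1,\ldots,X_m}}}.
\]
Writing $\binom{m}{k}=m!/(k!(m-k)!)$ and reindexing via $j=m-k$ turns this into
\[
\mean{\sum_{\cY\subset\cP_n} h(\cY,\cP_n)} = \frac{n^k}{k!}\sum_{j=0}^\infty e^{-n}\frac{n^j}{j!}\,\mean{h\param{\set{X_1,\ldots,X_k},\set{X_1,\ldots,X_{k+j}}}}.
\]
To finish, I would introduce $\cY'=\set{X_1,\ldots,X_k}$ together with an independent Poisson process $\cP_n'$ of intensity $nf$ and run the same conditioning on $\abs{\cP_n'}=j$: on that event $\cY'\cup\cP_n'$ has the law of $\set{X_1,\ldots,X_{k+j}}$, so the last display is precisely $\frac{n^k}{k!}\mean{h(\cY',\cY'\cup\cP_n')}$. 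Since $\cY'\cup\cP_n'$ and the $\cY'\cup\cP_n$ of the statement have the same distribution, the identity follows.

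The "hard part" here is really just bookkeeping rather than analysis: one must be careful that $\cY$ ranges over \emph{unordered} $k$-subsets (which is consistent because the points of a Poisson process with a density intensity are a.s.\ distinct), and one must legitimately interchange expectation with the infinite sum over $m$ — which is exactly why the argument is first carried out for nonnegative $h$ and then extended by linearity under an integrability hypothesis. No substantive obstacle arises beyond this.
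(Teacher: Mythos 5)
Your proof is correct. The paper itself does not prove this statement --- it is quoted verbatim from \cite{penrose_random_2003} (Theorem 1.6) in the appendix --- but your argument (condition on $\abs{\cP_n}\sim\mathrm{Poisson}(n)$, use that given $\abs{\cP_n}=m$ the points are i.i.d.\ with density $f$, exploit exchangeability to collapse the $\binom{m}{k}$ subsets, reindex, and recognize the resulting series as the expectation over an independent Poisson process adjoined to $\cY'$) is exactly the standard proof given in that reference, with the integrability/Tonelli caveat handled appropriately.
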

We shall also need the following corollary, which treats second moments:
\begin{cor}\label{cor:prelim:palm2}
With the notation above, assuming $\abs{\cY_1} = \abs{\cY_2} = k$,
\[
    \E\Big\{\sum_{ \substack {
                    \cY_1 ,\cY_2\subset \cP_n  \\
                    \abs{\cY_1 \cap \cY_2} = j }}
    h(\cY_1,\cP_n)h(\cY_2,\cP_n)\Big\} = {\frac{n^{2k-j}}{j!((k-j)!)^2}} \mean{h(\cY_1',\cY_{12}' \cup \cP_n)h(\cY_2',\cY_{12}' \cup \cP_n)}
\]
where $\cY'_{12} = \cY'_1 \cup \cY'_2$ is a set of $2k-j$ $iid$ points in $\Rd$ with density $f(x)$, independent of $\cP_n$, and $\abs{\cY_1'\cap\cY_2'} = j$.
\end{cor}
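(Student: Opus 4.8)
The plan is to reduce the corollary to the single-set Palm formula of Theorem~\ref{thm:prelim:palm} by reindexing the double sum according to the union $\mathcal{W}=\cY_1\cup\cY_2$, which is a subset of $\cP_n$ of cardinality $2k-j$. Concretely, for every finite set $\mathcal{W}$ with $\abs{\mathcal{W}}=2k-j$ and every finite $\cX$ with $\mathcal{W}\subseteq\cX$, I would set
\[
\tilde h(\mathcal{W},\cX)\definedas\sum_{(\cY_1,\cY_2)}h(\cY_1,\cX)\,h(\cY_2,\cX),
\]
where the sum runs over all \emph{ordered} pairs $(\cY_1,\cY_2)$ with $\cY_1\cup\cY_2=\mathcal{W}$, $\abs{\cY_1}=\abs{\cY_2}=k$ and $\abs{\cY_1\cap\cY_2}=j$; this is a finite sum of measurable functions (note $\cY_i\subseteq\mathcal{W}\subseteq\cX$), hence measurable. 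Since each ordered pair $(\cY_1,\cY_2)\subset\cP_n$ with $\abs{\cY_1\cap\cY_2}=j$ is counted exactly once on the right-hand side of the identity below (namely in the term $\mathcal{W}=\cY_1\cup\cY_2$), we obtain
\[
\sum_{\substack{\cY_1,\cY_2\subset\cP_n\\\abs{\cY_1\cap\cY_2}=j}}h(\cY_1,\cP_n)\,h(\cY_2,\cP_n)=\sum_{\mathcal{W}\subset\cP_n}\tilde h(\mathcal{W},\cP_n),
\]
the sum on the right being over subsets of $\cP_n$ of size $2k-j$.

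Next I would apply Theorem~\ref{thm:prelim:palm}, with the index $k$ there replaced by $2k-j$ and with $h$ replaced by $\tilde h$, to get
\[
\mean{\sum_{\mathcal{W}\subset\cP_n}\tilde h(\mathcal{W},\cP_n)}=\frac{n^{2k-j}}{(2k-j)!}\,\mean{\tilde h(\mathcal{W}',\mathcal{W}'\cup\cP_n)},
\]
where $\mathcal{W}'$ consists of $2k-j$ i.i.d.\ points with density $f$, independent of $\cP_n$. It then remains to evaluate $\mean{\tilde h(\mathcal{W}',\mathcal{W}'\cup\cP_n)}$. A fixed $(2k-j)$-element set decomposes into an ordered pair $(\cY_1,\cY_2)$ of the required type in exactly $\binom{2k-j}{j}\binom{2k-2j}{k-j}=(2k-j)!/(j!\,((k-j)!)^2)$ ways (first choose the $j$-point common part, then split the remaining $2k-2j$ points into the two private parts). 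By the exchangeability of the i.i.d.\ points making up $\mathcal{W}'$, every one of the corresponding summands has the same expectation, namely $\mean{h(\cY_1',\cY_{12}'\cup\cP_n)\,h(\cY_2',\cY_{12}'\cup\cP_n)}$ for one fixed labelled decomposition $\cY_{12}'=\mathcal{W}'=\cY_1'\cup\cY_2'$ with $\abs{\cY_1'\cap\cY_2'}=j$. Substituting this into the previous display and cancelling the factor $(2k-j)!$ gives exactly the claimed identity.

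I do not expect a genuine obstacle; the argument is essentially bookkeeping. The one point that deserves care is the exchangeability step together with the combinatorial count: permuting the labels of the i.i.d.\ points making up $\mathcal{W}'$ permutes the summands of $\tilde h(\mathcal{W}',\mathcal{W}'\cup\cP_n)$ among themselves, because the ``extra'' argument $\mathcal{W}'\cup\cP_n$ is an unordered, hence label-invariant, function of those points; consequently all the summands have equal expectation, and it is \emph{not} needed that $\mathcal{W}'\cup\cP_n$ itself be exchangeable. An alternative route would be to quote a multivariate Mecke/Palm formula directly, but bootstrapping from the already-stated Theorem~\ref{thm:prelim:palm} keeps the proof self-contained.
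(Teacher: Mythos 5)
Your proposal is correct and follows essentially the same route as the paper: both reduce the double sum to the single-set Palm formula of Theorem~\ref{thm:prelim:palm} applied to $(2k-j)$-subsets, and both extract the same multiplicity $\binom{2k-j}{k}\binom{k}{j}=\binom{2k-j}{j}\binom{2k-2j}{k-j}=(2k-j)!/\bigl(j!\,((k-j)!)^2\bigr)$ via exchangeability of the i.i.d.\ points. The only cosmetic difference is that the paper obtains the multiplicity by conditioning on $\abs{\cP_n}=m$ and comparing the two conditional expectations, whereas you package it into the auxiliary functional $\tilde h$ before invoking the Palm theorem.
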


\begin{proof}
Given $\abs{\cP_n} = m$, the sum on the LHS is finite. Therefore,
\beq\label{eq:sum_1_cond}
&&\qquad \E\Big\{\sum_{ \substack { \cY_1 ,\cY_2\subset \cP_n  \\\abs{\cY_1 \cap \cY_2} = j }}h(\cY_1,\cP_n)h(\cY_2,\cP_n)}\Big|{\abs{\cP_n} = m\Big\}\\
&&\ \  = \binom{m}{2k-j}\binom{2k-j}{k}\binom{k}{j} \cmean{h(\cY_1,\cP_n)h(\cY_2,\cP_n)}{\abs{\cP_n} = m}_{\abs{\cY_1\cap\cY_2} = j}  \notag
\eeq
Choosing now all possible subsets $\cY$ of size $2k-j$, and splitting each of them into two arbitrary subsets $\cY_1,\cY_2$ of size $k$ with $\abs{\cY_1\cap\cY_2} = j$, yields
\beq
\label{eq:sum_2_cond}
&&\quad \E\Big\{\sum_{ \substack {\cY \subset \cP_n  \\\abs{\cY} = 2k-j }}h(\cY_1,\cP_n)h(\cY_2,\cP_n)}\Big|{\abs{\cP_n} = m\Big\}\\
&&\qquad = \binom{m}{2k-j} \cmean{h(\cY_1,\cP_n)h(\cY_2,\cP_n)}{\abs{\cP_n} = m}_{\abs{\cY_1\cap\cY_2} = j}.  \notag
\eeq
Combining \eqref{eq:sum_1_cond}, \eqref{eq:sum_2_cond}, and  Theorem \ref{thm:prelim:palm} for subsets $\cY$ of size $2k-j$ yields,
\begin{align*}
&\E\Big\{\sum_{ \substack { \cY_1 ,\cY_2\subset \cP_n  \\\abs{\cY_1 \cap \cY_2} = j }}h(\cY_1,\cP_n)h(\cY_2,\cP_n)\Big\}\\
&\qquad\qquad = \binom{2k-j}{k}\binom{k}{j}\E\Big\{\sum_{ \substack {\cY \subset \cP_n  \\\abs{\cY} = 2k-j }}h(\cY_1,\cP_n)h(\cY_2,\cP_n)\Big\} \\
&\qquad\qquad = \frac{n^{2k-j}}{j! ((k-j)!)^2} \mean{h(\cY_1',\cY_{12}'\cup\cP_n)h(\cY_2',\cY_{12}'\cup\cP_n)},
\end{align*}
where $\cY'_{12} = \cY'_1 \cup \cY'_2$ is a set of $2k-j$ $iid$ points in $\Rd$ with density $f(x)$, independent of $\cP_n$, and $\abs{\cY_1'\cap\cY_2'} = j$.
\end{proof}

\section{Stein's Method}\label{sec:apndx_stein}
In this paper we heavily used Stein's method to derive limit theorems for the sums of dependent Bernoulli variables. We need both the Poisson and normal approximations, which are presented below.

\begin{defn}\label{def:dep_graph}
Let $(I,E)$ be a graph. For $i,j\in I$ we denote $i \sim j$ if $(i,j) \in E$. Let $\set{\xi_i}_{i\in I}$ be a set of random variables. We say that $(I,\sim)$ is a dependency graph for $\set{\xi_i}$ if for every $I_1\cap I_2 = \emptyset$, with no edges between $I_1$ and $I_2$, the set of variables $\set{\xi_i}_{i\in I_1}$ is independent of $\set{\xi_i}_{i\in I_2}$. We also define the neighborhood of $i$ as $\cN_i := \set{i} \cup \set{j \in I \: j\sim i}$.
\end{defn}

\begin{thm}[Stein's Method for Bernoulli Variables, Theorem 2.1 in \cite{penrose_random_2003}] \label{thm:prelim:stein_bern}
Let $\set{\xi_i}_{i\in I}$ be a set of Bernoulli random variables, with dependency graph $(I,\sim)$. Let
\[
p_i \definedas \mean{\xi_i},  \ \ p_{i,j} \definedas \mean{\xi_i \xi_j}, \ \
 \lambda \definedas \sum_{i\in I} p_i, \ \ W\definedas \sum_{i\in I} \xi_i,
\ \ Z\sim\pois{\lambda}.
\]
Then,
\[
\dtv{W}{Z} \le \min(3,\lambda^{-1}) \Big(\sum_{i\in I}\sum_{j\in \cN_i\bs \set{i}} p_{ij} + \sum_{i\in I}\sum_{j \in \cN_i} p_i p_j\Big).
\]
\end{thm}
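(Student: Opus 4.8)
\emph{Proof plan.} The plan is to prove this by the Chen--Stein method, following the standard route. First I would set up the Stein equation for the $\pois{\lambda}$ distribution: given a set $A\subset\Z_+$, let $g_A\:\Z_+\to\R$ with $g_A(0)=0$ solve
\[
\lambda\, g_A(j+1) - j\, g_A(j) = \ind_A(j) - \prob{Z\in A},\qquad j\ge 0 ,
\]
which has an explicit solution by the obvious recursion. Since $\dtv{W}{Z}=\sup_A\abs{\prob{W\in A}-\prob{Z\in A}}$, evaluating the Stein equation at $j=W$ and taking expectations of both sides gives
\[
\dtv{W}{Z} = \sup_{A\subset\Z_+}\ \abs{\mean{\lambda\, g_A(W+1) - W\, g_A(W)}} ,
\]
so the whole problem reduces to bounding $\mean{\lambda\, g_A(W+1)-W\,g_A(W)}$ for a fixed $A$. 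Alongside this I would record the classical estimate on the first difference $\Delta g_A(j)\definedas g_A(j+1)-g_A(j)$, namely $\sup_A\sup_{j\ge 0}\abs{\Delta g_A(j)}\le\min(3,\lambda^{-1})$ (the sharp constant being $(1-e^{-\lambda})/\lambda$); this is a purely analytic fact about the recursion, and I expect it to be the one genuinely delicate point.

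Next I would exploit the dependency graph. For each $i\in I$ set $U_i\definedas\sum_{j\in\cN_i}\xi_j$ and $W_i\definedas W-U_i=\sum_{j\notin\cN_i}\xi_j$, so that $W_i$ is independent of $\xi_i$ by the defining property of the dependency graph, while $U_i\ge\xi_i$. Writing $\lambda=\sum_i p_i$ and $W\,g_A(W)=\sum_i\xi_i\,g_A(W)$, and inserting the auxiliary term $p_i\mean{g_A(W_i+1)}=\mean{\xi_i\,g_A(W_i+1)}$ (this equality uses the independence of $\xi_i$ and $W_i$), I would split
\begin{align*}
\mean{\lambda g_A(W+1)-W g_A(W)} &= \sum_{i\in I} p_i\param{\mean{g_A(W+1)}-\mean{g_A(W_i+1)}} \\
&\quad + \sum_{i\in I}\mean{\xi_i\param{g_A(W_i+1)-g_A(W)}}.
\end{align*}
In the first sum, since $W+1$ and $W_i+1$ differ by $U_i$, the bracket is at most $\norm{\Delta g_A}_\infty\mean{U_i}=\norm{\Delta g_A}_\infty\sum_{j\in\cN_i}p_j$ in absolute value, producing the term $b_2\definedas\sum_i\sum_{j\in\cN_i}p_ip_j$. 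In the second sum, on $\set{\xi_i=1}$ we have $W=W_i+U_i$ with $U_i\ge 1$, so $\abs{\xi_i(g_A(W_i+1)-g_A(W))}\le\norm{\Delta g_A}_\infty\,\xi_i(U_i-1)=\norm{\Delta g_A}_\infty\sum_{j\in\cN_i\bs\set{i}}\xi_i\xi_j$, whose expectation is $\norm{\Delta g_A}_\infty\sum_{j\in\cN_i\bs\set{i}}p_{ij}$, producing the term $b_1\definedas\sum_i\sum_{j\in\cN_i\bs\set{i}}p_{ij}$.

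Putting the two estimates together and taking the supremum over $A$ yields
\begin{align*}
\dtv{W}{Z} &\le \Big(\sup_A\norm{\Delta g_A}_\infty\Big)(b_1+b_2) \\
&\le \min(3,\lambda^{-1})\Big(\sum_{i\in I}\sum_{j\in\cN_i\bs\set{i}}p_{ij} + \sum_{i\in I}\sum_{j\in\cN_i}p_ip_j\Big),
\end{align*}
which is exactly the claimed bound. The only ingredient that is not routine bookkeeping is the Stein-factor inequality $\sup_A\norm{\Delta g_A}_\infty\le\min(3,\lambda^{-1})$; everything else is a coupling/telescoping argument using only $\abs{g_A(a)-g_A(b)}\le\norm{\Delta g_A}_\infty\abs{a-b}$ together with the conditional-independence structure encoded by the dependency graph. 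Since this is a well-known result (it is Theorem~2.1 of \cite{penrose_random_2003}, going back to Chen, Stein, and Arratia--Goldstein--Gordon), in the paper I would simply cite the Stein-factor bound rather than reprove the recursion estimate.
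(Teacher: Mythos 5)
The paper does not actually prove this statement---it is imported verbatim as Theorem 2.1 of \cite{penrose_random_2003} and used as a black box, so there is no internal proof to compare against. Your outline is the standard (and correct) Chen--Stein local-dependence argument, matching the proof in the cited source: the decomposition via $W_i=W-U_i$ with $\xi_i$ independent of $W_i$, the insertion of $p_i\,\mean{g_A(W_i+1)}=\mean{\xi_i g_A(W_i+1)}$, and the two telescoping bounds producing exactly the terms $\sum_i\sum_{j\in\cN_i\setminus\{i\}}p_{ij}$ and $\sum_i\sum_{j\in\cN_i}p_ip_j$ are all right, and you correctly isolate the one non-routine ingredient, the Stein-factor bound $\sup_A\sup_j|\Delta g_A(j)|\le\min(3,\lambda^{-1})$, which indeed follows from the sharp estimate $(1-e^{-\lambda})/\lambda$.
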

\begin{thm}[CLT for sums of weakly dependent variables, Theorem 2.4 in  \cite{penrose_random_2003}]\label{thm:clt_stein}
Let $(\xi_i)_{i\in I}$ be a finite collection of random variables, with $\mean{\xi_i} = 0$. Let $(I,\sim)$ be the dependency graph of $(\xi_i)_{i\in I}$, and assume that its maximal  degree is $D-1$. Set $W\definedas\sum_{i\in I} \xi_i$, and suppose that $\mean{W^2}=1$. Then for all $w\in \R$,
\[
\abs{F_W(w) - \Phi(w)} \le 2(2\pi)^{-1/4} \sqrt{D^2 \sum_{i\in I} \mean{\abs{\xi_i}^3}} + 6\sqrt{D^3 \sum_{i\in I} \mean{\abs{\xi_i}^4}},
\]
where $F_W$ is the  distribution function of $W$ and $\Phi$ that of a standard Gaussian.
\end{thm}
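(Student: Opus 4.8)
The plan is to prove this Berry--Esseen-type inequality by Stein's method, in the standard form used for sums with a local dependence structure (this is precisely Theorem 2.4 of \cite{penrose_random_2003}; what follows is a sketch of that argument). Throughout, for $i\in I$ write $\eta_i\definedas\sum_{j\in\cN_i}\xi_j$ and $W^{(i)}\definedas W-\eta_i$. By the defining property of a dependency graph there are no edges between $\{i\}$ and $I\setminus\cN_i$, so $\xi_i$ is independent of $W^{(i)}$; and expanding $\mean{W^2}=\sum_{i,j}\mean{\xi_i\xi_j}$ and discarding the vanishing (independent, mean-zero) cross terms yields the identity $\sum_{i\in I}\mean{\xi_i\eta_i}=\mean{W^2}=1$.

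First I would set up the Stein equation: fix $w\in\R$ and let $f=f_w$ be the bounded solution of $f'(u)-uf(u)=\ind_{\{u\le w\}}-\Phi(w)$, recalling the classical bounds $0\le f_w\le\sqrt{2\pi}/4$, $\|f_w'\|_\infty\le 1$, and a Lipschitz-type control on $u\mapsto uf_w(u)$. Since $F_W(w)-\Phi(w)=\mean{f_w'(W)-Wf_w(W)}$, the task is to bound the right-hand side uniformly in $w$. Using $\mean{\xi_i f_w(W^{(i)})}=0$ and $f_w(W)-f_w(W^{(i)})=\int_0^{\eta_i}f_w'(W^{(i)}+t)\,dt$, and then inserting $1=\sum_i\mean{\xi_i\eta_i}$, I would obtain
\[
\mean{f_w'(W)-Wf_w(W)}=\mean{f_w'(W)\Bigl(1-\sum_{i\in I}\xi_i\eta_i\Bigr)}-\sum_{i\in I}\mean{\xi_i\int_0^{\eta_i}\bigl(f_w'(W^{(i)}+t)-f_w'(W)\bigr)\,dt},
\]
and bound the two terms separately.

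For the first term, $\|f_w'\|_\infty\le 1$ and Cauchy--Schwarz bound it by $\sqrt{\var{\sum_{i\in I}\xi_i\eta_i}}$; expanding this variance into covariances, which vanish unless the $2$-neighbourhoods of $i$ and $i'$ meet, and applying H\"older's inequality to each surviving term, gives a bound of the form $\const\sqrt{D^3\sum_{i\in I}\mean{\abs{\xi_i}^4}}$, i.e.\ (up to the constant) the second term of the statement. For the second term I would write $f_w'(u)-f_w'(v)=\bigl(uf_w(u)-vf_w(v)\bigr)+\bigl(\ind_{\{u\le w\}}-\ind_{\{v\le w\}}\bigr)$: the first summand is Lipschitz in $u$, so after H\"older over the neighbours of $i$ and the $t$-integration it produces moment terms absorbed into the two displayed error bounds; the second, indicator, summand does not become small and is the genuine obstacle --- controlling $\mean{\abs{\ind_{\{W^{(i)}+t\le w\}}-\ind_{\{W\le w\}}}}$ requires an a priori concentration estimate of the form $\PP(\abs{W-a}\le\delta)\le\const\bigl(\delta+(\text{a moment correction})\bigr)$, established by a companion Stein argument, and ultimately contributes a term of order $\sqrt{D^2\sum_{i\in I}\mean{\abs{\xi_i}^3}}$. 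Tracking the numerical constants through the H\"older and concentration steps then produces the explicit factors $2(2\pi)^{-1/4}$ and $6$.

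The hard part will be precisely this last point. Because the test function $\ind_{(-\infty,w]}$ is discontinuous, the naive Taylor estimate for $f_w'(W^{(i)}+t)-f_w'(W)$ fails, and one must instead establish --- and then feed back in --- a uniform small-ball bound for $W$ around an arbitrary point, so the argument has a mildly bootstrapping flavour; and getting the combinatorial bookkeeping right, i.e.\ the correct powers of $D$ in both the variance term and the concentration term, is where most of the care goes. The rest (the Stein equation and its estimates, the local decomposition, the H\"older bounds) is routine.
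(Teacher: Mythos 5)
This theorem is not proved in the paper at all: it is quoted verbatim as Theorem 2.4 of \cite{penrose_random_2003}, so the only meaningful benchmark is Penrose's own argument. Your setup is fine and is common to every Stein-type proof under local dependence: the Stein equation, the neighbourhood sums $\eta_i=\sum_{j\in\cN_i}\xi_j$, the independence of $\xi_i$ from $W^{(i)}=W-\eta_i$, and the identity $\sum_{i\in I}\mean{\xi_i\eta_i}=\mean{W^2}=1$ are all correct. The problem is the route you choose for the discontinuous test function. You propose the Chen--Shao ``concentration inequality'' approach: keep $h=\ind_{(-\infty,w]}$, isolate the indicator increment in $f_w'(u)-f_w'(v)$, and control it by an a priori small-ball estimate for $W$. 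When that argument is carried out it yields a bound that is \emph{linear} in $D^2\sum_i\mean{\abs{\xi_i}^3}$ (plus a fourth-moment correction), with constants unrelated to $2(2\pi)^{-1/4}$; it does not produce the square root on the third-moment term. A linear bound implies the stated square-root bound only after a separate case analysis ($D^2\sum_i\mean{\abs{\xi_i}^3}\le 1$ versus $>1$, using $\abs{F_W-\Phi}\le 1$) \emph{and} only if the linear constant is at most $2(2\pi)^{-1/4}\approx 1.26$, which the general concentration-inequality argument does not deliver. So the two genuinely hard ingredients of your plan --- the concentration lemma and the claim that ``tracking the numerical constants produces the explicit factors'' --- are respectively unproved and, as stated, not how those factors arise.

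The factor $2(2\pi)^{-1/4}\sqrt{\,\cdot\,}$ is the fingerprint of the other classical route, which is the one Penrose actually uses: replace $\ind_{(-\infty,w]}$ by a piecewise-linear approximation $h_{w,\lambda}$ of width $\lambda$, prove a smooth-test-function bound $\abs{\mean{h(W)}-\mean{h(Z)}}\le \norm{h'}_\infty\cdot C$ with $C$ built from the moment sums via the local decomposition you already wrote down, pay $\lambda/\sqrt{2\pi}$ for the smoothing (the density bound on $\Phi$), and then optimize $\lambda\mapsto \lambda/\sqrt{2\pi}+C/\lambda$; the minimum value $2(2\pi)^{-1/4}\sqrt{C}$ is exactly the first term of the statement, and the second term arises similarly from the fourth-moment part of the Stein estimate. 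To close the gap you should either carry out this smoothing/optimization argument, or, if you keep the concentration route, actually prove the small-ball lemma and then exhibit the explicit constants and the case analysis needed to deduce the inequality in the precise form stated.
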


\bibliographystyle{plain}
\bibliography{phd}



\end{document}